\def\Mat{\mathbb{M}}
\def\Ball{\mathcal{B}}
\def\A{\mathcal{A}}
\def\h{\mathrm{H}}
\def\D{\mathbb{D}}
\def\N{\mathbb{N}}
\def\M{\mathcal{M}}
\def\C{\mathbb{C}}
\def\p{\mathbb{P}}
\def\bound{\mathcal{B}}
\def\diag{\mathrm{diag}}
\def\de{\partial}
\def\spa{\mathrm{span}}
\def\T{\mathbb{T}}
\def\rkhs{\mathcal{H}}
\def\W{\mathbb{W}}
\numberwithin{equation}{section}
\theoremstyle{plain}
\newtheorem{theo}{Theorem}[section]
\newtheorem{prop}[theo]{Proposition}
\newtheorem{lemma}[theo]{Lemma}
\newtheorem{defi}[theo]{Definition}
\theoremstyle{definition}
\newtheorem{question}{Question}
\newtheorem{ex}[theo]{Example}
\newtheorem{rem}[theo]{Remark}
\title{Interpolating $d$-tuples of Matrices}
\author{Alberto Dayan}
\address{Department of Mathematics\newline Washington University in St. Louis,\newline One Brookings Drive, St. Louis, MO 63130, USA}
\email{alberto.dayan@wustl.edu}
\thanks{The author was partially supported by National Science Foundation Grant DMS 1565243}
\begin{document}
\maketitle
\begin{abstract}
We study interpolating sequences of $d$-tuples of matrices, by looking at the commuting and the non-commuting case separately. In both cases, we will give a characterization of such sequences in terms of separation conditions on suitable reproducing kernel Hilbert spaces, and we will give sufficient conditions stated in terms of separation via analytic functions. Examples of such interpolating sequences will also be given.
\end{abstract}
\section{Introduction}
Let $\h^\infty$ be the algebra of bounded analytic functions on the unit disc, normed with 
\[
||\phi||_\infty:=\sup_{z\in\D}|\phi(z)|\qquad\phi\in\h^\infty.
\]
A sequence $Z:=(z_n)_{n\in\N}$ in the unit disc $\D$ is interpolating if, for any bounded sequence $(w_n)_{n\in\N}$ in $\C$, there exists a bounded analytic function $\phi$ such that $\phi(z_n)=w_n$, for any $n$ in $\N$. Intuitively, an interpolating sequence is a separated sequence, as we want to specify the values of a bounded holomorphic functions \emph{arbitrarily} at the nodes. Since we work with analytic functions in the unit disc, a natural way to measure the distance between the points of a sequence is by using the {\bf pseudo-hyperbolic distance}
\[
\rho(z, w)=|b_w(z)|:=\left|\frac{w-z}{1-\overline{w}z}\right|\qquad z,w\in\D.
\]
Here $b_w$ denotes the involutive Blaschke factor at $w$.\\
The sequence $Z$ is said to be {\bf strongly separated} if 
\[
\inf_{n\in\N}\prod_{j\ne n}\rho(z_n, z_j)>0
\]
whereas $z$ is {\bf weakly separated} if 
\[
\inf_{n\ne j}\rho(z_n, z_j)>0.
\]
Since any function $\phi$ in $\h^\infty$ with zeros of multiplicities $m_1, \dots, m_l$ at the points $\lambda_1, \dots, \lambda_l$ in $\D$ is divisible in $\h^\infty$ by a Blaschke product
\[
\phi=g~\prod_{j=1}^ lb_{\lambda_j}^ {m_j},
\]
where $||g||_\infty=||\phi||_{\infty}$, being weakly separated is equivalent to the existence, for any pair of distinct indices $n$ and $j$, of a function $\phi_{n, j}$ in the unit ball of $\h^\infty$ that vanishes at $z_j$ and such that $|\phi_{n, j}(z_n)|\geq\delta$, for some positive $\delta$ independent of $n$ and $j$. Analogously, $Z$ is strongly separated if and only if there exists a positive $\delta$ such that, for any $n$ in $\N$, there exists a function $\phi_n$ in the unit ball of $\h^\infty$ that vanishes at all points of the sequence except $z_n$ and $|\phi_n(z_n)|\geq\delta$.\\

Let $\h^2$ be the Hardy space, that is, the Hilbert space of analytic functions on the unit disc with square summable Taylor coefficients. Carleson characterized interpolating sequences in the unit disc in \cite{carlocm} and \cite{carloss}:
\begin{theo}
\label{theo:carleson}
Let $Z$ be a sequence in the unit disc. The following are equivalent:
\begin{description}
\item[(i)] $Z$ is interpolating;
\item[(ii)] $Z$ is strongly separated;
\item[(iii)] $Z$ is weakly separated and the measure
\[
\mu_Z:=\sum_{n\in\N}(1-|z_n|^2)\delta_{z_n}
\]
satisfies the embedding condition
\begin{equation}
\label{eqn:cmh2}
||f||_{\mathrm{L}^2(\D, \mu_Z)}\leq C~||f||_{\h^2}\qquad f\in\h^2.
\end{equation}
\end{description}
\end{theo}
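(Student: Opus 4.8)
The three statements are equivalent, and the plan is to run the cycle $(i)\Rightarrow(ii)\Leftrightarrow(iii)\Rightarrow(i)$, working throughout with the reproducing kernel $k_w(z)=(1-\overline w z)^{-1}$ of $\h^2$, its normalization $\hat k_w:=(1-|w|^2)^{1/2}k_w$, and the two elementary identities $\langle f,\hat k_w\rangle=(1-|w|^2)^{1/2}f(w)$ and $|\langle\hat k_v,\hat k_w\rangle|^2=1-\rho(v,w)^2$ (the latter being the standard formula $1-\rho(v,w)^2=\frac{(1-|v|^2)(1-|w|^2)}{|1-\overline v w|^2}$). In this language \eqref{eqn:cmh2} reads $\sum_n|\langle f,\hat k_{z_n}\rangle|^2\le C^2\|f\|_{\h^2}^2$, i.e. $\{\hat k_{z_n}\}$ is a Bessel system. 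We may assume $Z$ is infinite.

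For $(i)\Rightarrow(ii)$ I would use only soft functional analysis: the evaluation map $T\colon\h^\infty\to\ell^\infty$, $\phi\mapsto(\phi(z_n))_n$, is a norm-one linear surjection of Banach spaces, so the open mapping theorem supplies an $M$ such that every unit vector of $\ell^\infty$ has a $T$-preimage of norm at most $M$. Applied to the $n$-th coordinate vector this produces $\phi_n\in M\cdot\Ball(\h^\infty)$ with $\phi_n(z_n)=1$ and $\phi_n(z_j)=0$ for $j\ne n$. For each finite $N$, $\phi_n$ vanishes at those $z_1,\dots,z_N$ different from $z_n$, hence by the Blaschke factorization recalled above it is divisible in $\h^\infty$ by $\prod_{j\le N,\ j\ne n}b_{z_j}$ with a quotient of the same supremum norm; evaluating at $z_n$ gives $1\le M\prod_{j\le N,\ j\ne n}\rho(z_n,z_j)$, and letting $N\to\infty$ yields $\inf_n\prod_{j\ne n}\rho(z_n,z_j)\ge 1/M$.

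The equivalence $(ii)\Leftrightarrow(iii)$ has a soft half and a geometric half. For $(iii)\Rightarrow(ii)$: weak separation is part of $(iii)$, and inserting $f=\hat k_{z_m}$ into the Bessel bound and isolating the diagonal term gives $\sum_{n\ne m}\bigl(1-\rho(z_m,z_n)^2\bigr)\le C^2-1$ for every $m$; since weak separation forces each summand below $1-\delta_0^2<1$, the convexity bound $-\log(1-t)\le\kappa_{\delta_0}\,t$ on $[0,1-\delta_0^2]$ upgrades this to $\sum_{n\ne m}\log\rho(z_m,z_n)^{-2}\le\kappa_{\delta_0}(C^2-1)$, i.e. $\prod_{n\ne m}\rho(z_m,z_n)\ge e^{-\kappa_{\delta_0}(C^2-1)/2}>0$ uniformly in $m$. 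For $(ii)\Rightarrow(iii)$ weak separation is automatic, and the content is that strong separation makes $\mu_Z$ a Carleson measure: I would prove the geometric square condition $\mu_Z(S(I))\le C'|I|$ directly — weak separation caps the number of nodes in any pseudo-hyperbolic ball of a fixed radius, while the product condition forces the weights $1-|z_n|^2$ of the nodes inside a box $S(I)$ to decay geometrically as $z_n$ nears the boundary, so that their total is controlled by $|I|$ — and then quote Carleson's embedding theorem to pass from the square condition to \eqref{eqn:cmh2}.

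The remaining implication $(iii)\Rightarrow(i)$ is Carleson's interpolation theorem proper, and this is where I expect the real work and the main obstacle to lie. Fix a bounded target $(w_n)$; the naive guess $\sum_n w_n B_n/B_n(z_n)$ (with $B$ the Blaschke product on $Z$ and $B_n=B/b_{z_n}$) interpolates at each node but the functions $B_n/B_n(z_n)$, though uniformly bounded by $1/\delta$, are not summable, so linearity must be abandoned. The plan is a $\overline\partial$-correction: take a $C^\infty$ function $F$ on $\D$ equal to $w_n$ on a small pseudo-hyperbolic ball around each $z_n$; by weak separation these balls are pairwise disjoint, so $\|F\|_\infty\le\sup_n|w_n|$, the form $\overline\partial F$ is supported in thin annuli around the $z_n$, and there $(1-|z|^2)|\overline\partial F(z)|\le C''\sup_n|w_n|$, with associated finite measure dominated by $\mu_Z$. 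On those annuli $|B|$ is bounded below, by the Schwarz--Pick lemma applied to $B/b_{z_n}$ together with strong separation, so $(\overline\partial F)/B$ is again smooth of Carleson-controlled size; one solves $\overline\partial v=(\overline\partial F)/B$ with $\|v\|_\infty<\infty$ — here is the one genuinely deep input, that a Carleson-measure datum admits a bounded $\overline\partial$-solution — and sets $\phi:=F-Bv$. Then $\overline\partial\phi=0$ (since $\overline\partial B=0$), so $\phi$ is holomorphic, $\|\phi\|_\infty\le\sup_n|w_n|+\|v\|_\infty<\infty$, and $\phi(z_n)=F(z_n)-B(z_n)v(z_n)=w_n$ because $B$ vanishes at each node. (Alternatively one can run Earl's constructive argument, producing the interpolant directly as a constant multiple of a Blaschke product.) Granting the $\overline\partial$-solvability with $L^\infty$ bounds controlled by the Carleson measure, the rest of $(iii)\Rightarrow(i)$ is bookkeeping; everything outside that single step is either soft functional analysis or the elementary estimates above.
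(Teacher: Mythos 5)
The paper does not prove this theorem: it is stated in the introduction as background and attributed to Carleson's original papers \cite{carlocm}, \cite{carloss}, so there is no in-paper argument to compare against. Your outline is essentially the standard modern proof: open mapping theorem plus Blaschke factorization for $(i)\Rightarrow(ii)$; the Gram identity $|\langle\hat k_v,\hat k_w\rangle|^2=1-\rho(v,w)^2$ together with a convexity bound for $(iii)\Rightarrow(ii)$; and a smooth cutoff with $\overline\partial$-correction for $(iii)\Rightarrow(i)$, where you correctly isolate the one deep input (bounded solutions of $\overline\partial v=g$ for Carleson data).

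The step whose stated justification would not survive being made rigorous is $(ii)\Rightarrow(iii)$, i.e.\ that strong separation forces the square condition $\mu_Z(S(I))\le C'|I|$. You assert that the product condition makes the weights $1-|z_n|^2$ of the nodes in $S(I)$ decay geometrically as they approach the boundary. That is not the mechanism: a strongly separated sequence may place, inside a single box $S(I)$, many nodes at a common height $1-|z_n|\approx h$ spread out in argument; their weights do not decay at all, and the box bound holds there because there are only about $|I|/h$ of them. The correct route is the very row estimate you already use in the other direction: strong separation gives $\sup_m\sum_{n\ne m}\bigl(1-\rho(z_m,z_n)^2\bigr)\le-2\log\delta$ via $-\log t\ge\tfrac12(1-t^2)$, and one upgrades this pointwise bound to the square condition by a stopping-time argument over dyadic boxes. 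If the top half of $S(I)$ meets $Z$ at some $z_{n_0}$, then every $z_j\in S(I)$ satisfies $1-\rho(z_{n_0},z_j)^2\ge c\,(1-|z_j|^2)/|I|$, so the row sum at $n_0$ dominates $\mu_Z(S(I))/|I|$; if not, bisect $I$ and recurse, and since the selected stopping intervals are pairwise disjoint their lengths sum to at most $|I|$. Weak separation plays no role in this estimate, only in the disjoint-ball construction of the cutoff in $(iii)\Rightarrow(i)$. With this replacement the cycle closes and the rest of your sketch is sound.
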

Condition \eqref{eqn:cmh2} can be extended to more general settings, as it can be expressed in terms of the Euclidean geometry of {\bf reproducing kernel Hilbert spaces}. A reproducing kernel Hilbert space on a domain $X$ of $\C^d$ is a Hilbert space $\rkhs$ of functions on $X$ such that point evaluation at any $x$ in $X$ is continuous. Thanks to Riesz representation Theorem, this implies that, for any $x$ in $X$, there exists a function $k_x$ in $\rkhs$ such that 
\begin{equation}
\label{eqn:repr}
f(x)=\braket{f, k_x}_\rkhs\qquad f\in\rkhs.
\end{equation}
This define a function $k\colon X\times X\to\C$ by
\[
k(x, y)=k_y(x)
\]
which is referred as a \emph{kernel} on $X$. To emphasize the dependence on the kernel $k$ of the Hilbert space $\rkhs$, we will denote the associated reproducing kernel Hilbert space by $\rkhs_k$, where necessary. Each reproducing kernel Hilbert space $\rkhs_k$ has a so called {\bf multiplier algebra}
\[
\M_k:=\{\phi\colon X\to\C \,|\, \phi f\in\rkhs_k,\,f\in\rkhs_k\}
\]
defined as the algebra of those functions on $X$ that multiplies $\rkhs_k$ into itself. Thanks to the closed graph Theorem, any function $\phi$ in $\M_k$ induces a bounded linear operator $M_\phi$ on $\rkhs_k$ by
\[
M_\phi(f):=\phi~f\qquad f\in\rkhs_k.
\]
In particular, $\M_k$ is normed by
\[
||\phi||_{\M_k}:=||M_\phi||_{\bound(\rkhs_k)}.
\]
 The Hardy space $\h^2$ is one of the most studied examples of a reproducing kernel Hilbert space, together with its {Szegö kernel}
\[
s_w(z):=\frac{1}{1-\overline{w}z}\qquad z, w\in\D.
\]
Its multiplier algebra $\M_s$ coincides isometrically with $\h^\infty$. \\
A measure $\mu$ on $X$ is a {\bf Carleson measure} for $\rkhs$ if $\mathrm{L}^2(X, \mu)$ embeds continuously in $\rkhs$. Hence \eqref{eqn:cmh2} says precisely that $\mu_Z$ is a Carleson measure for $\h^2$.\\
It turns out that separation conditions on a sequence $Z$ correspond to separation conditions on the kernel functions at the point of such sequence. More precisely, let 
\[
\hat{h}:=\frac{h}{||h||}
\]
be the normalization of any element in a Hilbert space $\rkhs$, and recall that a sequence $(h_n)_{n\in\N}$ of unit vectors in $\rkhs$ is a {\bf Riesz system} if there exists a positive $C$ such that, for any $(a_n)_{n\in\N}$ in $l^2$, 
\begin{equation}
\label{eqn:rs}
\frac{1}{C^2}~\sum_{n\in\N}|a_n|^2\leq\left|\left|\sum_{n\in\N}a_n h_n\right|\right|_\rkhs^2\leq C^2~\sum_{n\in\N}|a_n|^2.
\end{equation}
If the right inequality holds, we say that $(h_n)_{n\in\N}$ is a {\bf Bessel system}. The least constant $C$ in \eqref{eqn:rs} is denoted by the \emph{Riesz constant} of the sequence $(h_n)_{n\in\N}$. Shapiro and Shields proved in \cite{shapiro} that a sequence $Z$ in $\D$ is interpolating if and only if the sequence $(\hat{s}_{z_n})_{n\in\N}$ of normalized Szegö kernels at the points of the sequence is a Riesz system. Moreover, one can find in \cite[Th. 9.5]{john} a proof that, given any reproducing kernel Hilbert space $\rkhs_k$ on a domain $X$ and given a sequence $(x_n)_{n\in\N}$ of points in $X$, then the measure
\[
\mu_X:=\sum_{n\in\N}||k_{x_n}||^{-2}_{\rkhs_k}\delta_{x_n}
\]
is a Carleson measure for $\rkhs$ if and only if the sequence $(\hat{k}_{x_n})_{n\in\N}$ is a Bessel system in $\rkhs$. In particular, \eqref{eqn:cmh2} can be restated as the sequence of normalized Szegö kernels at the points of $Z$ being a Bessel system in the Hardy space. Such results, other than constitute a valuable correspondence between separation of points via holomorphic functions on a domain $X$ and separation of the respective kernel functions in a reproducing kernel Hilbert space, are extremely useful for extending Carleson's Theorem to more general settings.\\

One of the various efforts meant to extend Theorem \ref{theo:carleson} can be found in \cite{dayoss} and \cite{dayocm}, where the author extended Carleson interpolation Theorem to sequences of square matrices (of any dimensions). A holomorphic function $f$ on the unit disc is evaluated at a square matrix $M$ via the following extension of Cauchy integral formula:
\begin{equation}
\label{eqn:cauchymatonevar}
f(M):=\frac{1}{2\pi i}\int_{\de\D}f(\xi)(\xi~Id-M)^{-1}\,d\xi.
\end{equation}
In particular, for $f(M)$ to be defined, the eigenvalues of $M$ must lay in $\D$. Given then any sequence $A:=(A_n)_{n\in\N}$ of square matrices (of possibly different and not uniformly bounded dimensions) with spectra in the unit disc, one can extend the classic definition of interpolating sequences by identifying a sequence of bounded targets with a bounded sequence in $\h^\infty$:
\begin{defi}
\label{defi:intmat}
$A$ is interpolating if, for any bounded sequence $(\phi_n)_{n\in\N}$ in $\h^\infty$ there exists a bounded analytic function $\phi$ such that 
\[
\phi(A_n)=\phi_n(A_n)\qquad n\in\N.
\]
\end{defi}
As for separating the matrices in $A$, one can follow the same outline as for scalars, and define $A$ to be {\bf weakly separated} if there exists a $\delta>0$ such that, for any pair of distinct indices $j$ and $n$, there exits a function $\phi_{n, j}$ in the unit ball of $\h^\infty$ such that 
\[
\phi_{n, j}(A_j)=0\quad\phi(A_n)=\delta~Id.
\]
In the same fashion, $A$ is said to be {\bf strongly separated} if there exists a bounded sequence of $\h^\infty$ functions $(\phi_n)_{n\in\N}$ such that
\[
\phi_n(A_j)=\delta_{n, j}~Id. 
\]
In order to extend the Carleson measure condition in \eqref{eqn:cmh2} to the sequence of square matrices $A$, one has to consider separation conditions on finite dimensional subspaces of $\h^2$. For any $n$ in $\N$, let
\begin{equation}
\label{eqn:model}
H_n:=\h^2\ominus\{f\in\h^2\,|\,f(A_n)=0\}
\end{equation}
be the orthogonal complement in $\h^2$ of the subspace of all those $\h^2$ functions that vanish at $A_n$. Observe that if $A_n=z_n$ is a $1\times1$ scalar in $\D$, then $H_n$ is the line spanned by the Szegö kernel at $z_n$, thanks to the reproducing property in \eqref{eqn:repr}. With this in mind, one can extend the notion of Riesz and Bessel system to any sequence $(\rkhs_n)_{n\in\N}$ of closed subspaces of a Hilbert space $\rkhs$, by asking that there exists a positive $C$ so that, for any sequence of unit vectors $(h_n)_{n\in\N}$ chosen in $\rkhs$ so that each $h_n$ belongs to $\rkhs_n$, \eqref{eqn:rs} holds (or, in the case of a Bessel system, the right inequality in \eqref{eqn:rs} holds). Carleson's Theorem and Shapiro and Shields result in \cite{shapiro} can be then extended to this matrix node interpolation problem as follows, \cite{dayoss} \cite{dayocm}:
\begin{theo}
Let $A$ be a sequence of square matrices with spectra in the unit disc. The following are equivalent:
\begin{description}
\item[(i)] $A$ is interpolating;
\item[(ii)] $A$ is strongly separated;
\item[(iii)] $A$ is weakly separated and the sequence $(H_n)_{n\in\N}$ defined in \eqref{eqn:model} is a Bessel system in $\h^2$;
\item[(iv)] $(H_n)_{n\in\N}$ is a Riesz system in $\h^2$.
\end{description}
\end{theo}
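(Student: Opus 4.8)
The plan is to prove the equivalences by a cyclic argument, following the scalar template of Theorem \ref{theo:carleson} and Shapiro--Shields but replacing Szegö kernels by the finite dimensional model spaces $H_n$. First I would set up the basic dictionary: for each $n$, the space $\{f\in\h^2\,|\,f(A_n)=0\}$ is an invariant subspace for the backward shift (equivalently, $H_n$ is shift-coinvariant), so $H_n$ is a model space $K_{\Theta_n}$ where $\Theta_n$ is the finite Blaschke product whose zeros are the eigenvalues of $A_n$ with multiplicities given by the Jordan structure; in particular $\dim H_n=\dim A_n$ (counted appropriately). The key computational lemma, which I would prove first, is that for $\phi\in\h^\infty$ one has $\phi(A_n)=0$ if and only if $M_\phi^* H_n=0$ on... more precisely, that evaluation $\phi\mapsto\phi(A_n)$ factors through compression to $H_n$, so that $\|\phi(A_n)\|$ is comparable (up to dimension-free constants coming only from $\delta$) to the norm of the compressed multiplier $P_{H_n}M_\phi|_{H_n}$, and that prescribing $\phi(A_n)=W_n$ amounts to prescribing the action of this compression. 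This is the matricial analogue of the scalar fact that $\phi(z_n)=\langle\phi,\hat s_{z_n}\rangle$ up to normalization.

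With that dictionary in hand, the implications are routed as follows. For (i)$\Rightarrow$(ii): interpolation gives, for each standard basis choice of targets, an $\h^\infty$ function realizing $\phi_n(A_j)=\delta_{nj}\id$ with norm controlled by the interpolation constant, which is precisely strong separation; the open mapping / uniform boundedness principle applied to the interpolation map gives the uniform bound. For (ii)$\Rightarrow$(iii): strong separation trivially implies weak separation, and the strongly separating functions $(\phi_n)$ with $\phi_n(A_j)=\delta_{nj}\id$ can be used to build, via $f\mapsto(P_{H_n}f)_n$, a bounded left inverse to the synthesis map $\bigoplus H_n\to\h^2$, which forces the Bessel bound; here one uses that $\|M_{\phi_n}^*\|\le\|\phi_n\|_\infty$ and that $M_{\phi_n}^*$ maps $H_n$ isometrically-up-to-constants while annihilating $H_j$ for $j\ne n$ in the relevant pairing. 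For (iii)$\Rightarrow$(iv): this is the matricial Shapiro--Shields step — weak separation gives that the spaces $H_n$ are pairwise at positive angle in a uniform way, the Bessel condition gives the upper frame bound, and a Gram-matrix / perturbation argument (the Gram operator of the normalized family is a bounded perturbation of the identity that is bounded below because weak separation makes the off-diagonal blocks small in an averaged sense, exactly as in Carleson's original weak-separation-plus-Carleson-measure argument) yields the lower frame bound, i.e. the Riesz system property. Finally (iv)$\Rightarrow$(i): given a Riesz system $(H_n)$ and bounded targets $(\phi_n)$, one solves the interpolation problem by a Hilbert space least-norm argument (project the obvious formal solution onto the orthogonal complement of $\bigcap_n\ker$, or dualize), producing a multiplier of $\h^2$, i.e.\ an $\h^\infty$ function, with the prescribed compressions; the Riesz bound controls its norm. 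One should also invoke a commutant lifting / Nevanlinna--Pick type step to guarantee the interpolant is genuinely in $\h^\infty$ rather than merely a bounded operator, using that $\h^\infty$ is the full multiplier algebra of $\h^2$ and that the data is finite-dimensional at each node.

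The main obstacle I expect is the (iii)$\Rightarrow$(iv) step — upgrading Bessel to Riesz under only weak separation. In the scalar case this is Carleson's hardest inequality, proved either via the corona-type construction or via a clever estimate on the Gram matrix using the Carleson embedding. In the matrix setting the extra difficulty is that the ``angles'' between the subspaces $H_n$ are not scalars but must be controlled uniformly in the dimensions of the $A_n$, which are unbounded; so one cannot simply diagonalize. The right approach is probably to reduce to the scalar weak separation of the eigenvalues (each $H_n=K_{\Theta_n}$ and weak separation of $A$ should be shown equivalent to a uniform Carleson-type condition on the zero sets of the $\Theta_n$ together with a bound on their ``interaction''), and then run the scalar argument block-wise, being careful that all constants depend only on $\delta$ and the Bessel constant. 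A secondary technical point is the very first lemma relating $\phi(A_n)$ to the compression $P_{H_n}M_\phi|_{H_n}$: one must check that the Cauchy-integral functional calculus \eqref{eqn:cauchymatonevar} is compatible with the model-space compression, which is standard (it is the $H^\infty$ functional calculus of the completely non-unitary contraction associated with $A_n$) but needs to be stated cleanly so that the norm comparisons are genuinely dimension-free.
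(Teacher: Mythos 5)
This theorem is quoted in the paper as background: the equivalence of (i), (ii) and (iv) is from \cite{dayoss} and the implication (iii)$\Rightarrow$(iv) is from \cite{dayocm}, and the paper at hand gives no proof. So there is no in-paper argument to compare against line by line; I can only measure your outline against the machinery the paper uses in the analogous results it does prove (Theorems~\ref{theo:diagonal} and~\ref{theo:ncdiagonal}). On that score your dictionary is the right one --- $H_n$ is a finite model space, $M_\phi^*$ leaves it invariant and acts there by $K_{A_n}(u,v)\mapsto K_{A_n}(u,\phi(A_n)^*v)$, and (iv)$\Rightarrow$(i) is the Pick property of $\h^2$ together with Theorem~\ref{theo:nikriesz} and a normal-families argument, exactly the route the paper takes for (iii)$\Rightarrow$(i) in Theorem~\ref{theo:diagonal}. (One small correction: $\|\phi(A_n)\|$ is \emph{not} in general comparable to $\|M_\phi^*|_{H_n}\|$ by dimension-free constants --- the two encode $\phi(A_n)$ in different bases and the comparison constants blow up with the Jordan structure of $A_n$ --- but you never actually need the comparison, only the bijection between prescribing $\phi(A_n)$ and prescribing $M_\phi^*|_{H_n}$, which is exact.)

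The two steps that do not go through as written are (ii)$\Rightarrow$(iii) and (iii)$\Rightarrow$(iv). For (ii)$\Rightarrow$(iii): the functions $\phi_n$ with $\phi_n(A_j)=\delta_{nj}\id$ give a uniformly bounded family of \emph{oblique} projections $M_{\phi_n}^*$ onto $H_n$ along the span of the rest, but uniformly bounded oblique projections do not yield the Bessel inequality. (If they did, scalar strong separation would give the Carleson embedding ``for free,'' and that is exactly the hard content of Carleson's theorem.) The honest route in the cited work is (ii)$\Rightarrow$(i)$\Rightarrow$(iv)$\Rightarrow$(iii). For (iii)$\Rightarrow$(iv), your ``Gram matrix is a small perturbation of the identity'' heuristic already fails in the scalar case: weak separation bounds each off-diagonal entry $|\langle\hat s_{z_n},\hat s_{z_j}\rangle|$ away from $1$, but does not make the rows of the Gram matrix $\ell^1$- or $\ell^2$-small, so there is no perturbation of the identity to invoke. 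Upgrading Bessel to Riesz under weak separation is a genuine Carleson-type theorem, and in the matrix setting it is not a blockwise rerun of the scalar one: weak separation of the $A_n$ separates eigenvalues \emph{across} distinct nodes, but places no constraint on the configuration of eigenvalues \emph{within} a single $A_n$ (they may collide with arbitrary multiplicity), so one cannot reduce to the scalar weak separation of the union of eigenvalues. This step is the entire content of \cite{dayocm}, and the proposal does not supply a substitute for it.
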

The aim of this note is to consider sequences of \emph{$d$-tuples} of matrices (of any dimensions), and to determine whether some well known results on interpolating sequences in \emph{multi-variable} function theory extend to a matrix interpolation problem. The main distinction for such a multi-variable matrix interpolation problem from the scalar case is that a $d$-tuple
\[
M=(M^1, \dots, M^d)
\]
of $m\times m$ matrices might commute or not. If the matrices in $M$ commute, then \eqref{eqn:cauchymatonevar} has a multi-variable analogue, and a bounded holomorphic function on the polydisc can be applied to $M$, provided that its joint spectrum belongs to $\D^d$. On the other hand, not even a polynomial can be applied to $M$ if its components do not commute, and therefore in order to study interpolating properties of sequences of $d$-tuples of non commuting matrices one has to consider a whole different class of functions. Given this intrinsic difference, this introduction will treat those two cases separately.
\subsection{Interpolating $d$-tuples of Commuting Matrices}
 Let $\h^\infty_d$ be the Banach algebra of bounded analytic functions on the polydisc $\D^d$. A sequence $Z=(z_n)_{n\in\N}$ in $\D^d$ is interpolating for $\h^\infty_d$ if, given any bounded $(w_n)_{n\in\N}$ in $\C$ there exists a bounded holomorphic function $f$ on $\D^d$ so that $f(z_n)=w_n$, for all $n$. Berndtsson, Chang and Lin proved in \cite{bern} the following analogue of Theorem \ref{theo:carleson}:
\begin{theo}[Berndtsson, Chang and Lin]
\label{theo:poly:interp}
Let $Z=(z_n)_{n\in\N}$ be a sequence in $\D^d$, and let \textnormal{(a)}, \textnormal{(b)} and \textnormal{(c)} denote the following statements:
\begin{itemize}
\item[(a)]
\begin{equation}
\label{eqn:ss}
\inf_{n\in\N}\prod_{j\ne n}\rho_G(z_n, z_j)>0;
\end{equation}
\item[(b)] $Z$ is interpolating for $\h^\infty_d$;
\item[(c)] The measure
\[
\mu_Z:=\sum_{n\in\N}\left(\prod_{i=1}^d(1-|z_n^i|^2)\right)\delta_{z_n}
\]
is a Carleson measure for $\h^2_d$ and
\begin{equation}
\label{eqn:ws}
\inf_{n\ne j}\rho_G(z_n, z_j )>0.
\end{equation}
\end{itemize}
Then \textnormal{(a)}$\implies$\textnormal{(b)}$\implies$\textnormal{(c)}, and if $d\geq2$ none of the converse implications hold.
\end{theo}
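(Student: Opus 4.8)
The plan is to prove $\textnormal{(a)}\Rightarrow\textnormal{(b)}\Rightarrow\textnormal{(c)}$ and then, for every $d\ge2$, to exhibit two explicit sequences in $\D^d$ witnessing that $\textnormal{(b)}\not\Rightarrow\textnormal{(a)}$ and $\textnormal{(c)}\not\Rightarrow\textnormal{(b)}$. The first implication is the analytic heart of the theorem, the second is a duality argument, and the two non-implications are a matter of choosing the nodes well.

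For $\textnormal{(a)}\Rightarrow\textnormal{(b)}$ I would run a weighted $\bar\partial$-argument of Berndtsson type. Fix bounded data $(w_n)_{n\in\N}$. Since $\textnormal{(a)}$ forces weak separation, one can pick pairwise disjoint polydiscs $P_n$ centred at $z_n$ of fixed hyperbolic size, together with cut-offs $\chi_n\in C_c^\infty(P_n)$ equal to $1$ near $z_n$, and set $F:=\sum_n w_n\chi_n$; then $F(z_n)=w_n$, and $\bar\partial F=\sum_n w_n\,\bar\partial\chi_n$ is supported in the shells $P_n\setminus\{\chi_n=1\}$, hence away from every node. The aim is to solve $\bar\partial u=\bar\partial F$ with $u$ forced to vanish at each $z_n$, so that $f:=F-u$ is a bounded holomorphic function with $f(z_n)=w_n$. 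One solves the $\bar\partial$-equation relative to a plurisubharmonic weight $e^{-\varphi}$ that is non-integrable at each $z_n$: near $z_n$ the weight should behave like $|\zeta-z_n|^{-2d}$ (i.e. $\varphi\approx 2d\log|\zeta-z_n|$, suitably regularized across the cut-offs), while away from the nodes $\varphi$ is a fixed strictly plurisubharmonic function; a weighted $L^\infty$-estimate for $\bar\partial$ of Berndtsson type then produces $u$ with $u(z_n)=0$, holomorphic off the supports of the $\bar\partial\chi_n$, and $\|u\|_\infty\le C$. The main obstacle is the design of $\varphi$: it must be globally plurisubharmonic with a quantitative bound on its complex Hessian, non-integrable exactly at the nodes, and such that the weighted norm of $\bar\partial F$ it induces is bounded by $C\sup_n|w_n|$ with $C$ depending only on the separation constant in $\textnormal{(a)}$ — and it is precisely the product $\prod_{j\ne n}\rho_G(z_n,z_j)$ appearing in $\textnormal{(a)}$ that furnishes the normalization making this estimate uniform in $n$.

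For $\textnormal{(b)}\Rightarrow\textnormal{(c)}$, the weak separation \eqref{eqn:ws} is the easy half. If $Z$ is interpolating, the restriction map $\h^\infty_d\to\ell^\infty$ is bounded and onto, hence open; so there is a uniform $M$ such that the datum equal to $1$ at $z_n$, $0$ at $z_j$, and of modulus $\le1$ elsewhere is realized by some $f$ with $\|f\|_\infty\le M$, whence $f/M$ is in the unit ball of $\h^\infty_d$, vanishes at $z_j$, and has modulus $\ge1/M$ at $z_n$; by the definition of $\rho_G$ this gives $\rho_G(z_n,z_j)\ge1/M$. For the Carleson measure condition one applies the open mapping theorem again to produce functions $f_n\in\h^\infty_d$ with $f_n(z_j)=\delta_{nj}$ and $\bigl\|\sum_n\lambda_nf_n\bigr\|_\infty\le M$ for every finitely supported $(\lambda_n)$ in the closed unit ball of $\ell^\infty$; evaluating at an arbitrary point of $\D^d$ and maximising over unimodular coefficients yields the pointwise bound $\sum_n|f_n(\zeta)|\le M$. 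The embedding $\sum_n|g(z_n)|^2\prod_{i=1}^d(1-|z_n^i|^2)\le C\|g\|_{\h^2_d}^2$, for all $g\in\h^2_d$, is then deduced from $(f_n)$ by a duality argument against $\h^2_d$ — in one variable this is classical, but the one-variable route factors through Blaschke products, so in the polydisc it requires a somewhat more careful argument, which I regard as a secondary technical point. By \cite[Th. 9.5]{john} this embedding is precisely the statement that $\mu_Z$ is a Carleson measure for $\h^2_d$. (The family $(f_n)$ also yields, through the biorthogonality $\langle w_n^{1/2}f_n,\hat s_{z_j}\rangle=\delta_{nj}$ with $w_n=\prod_i(1-|z_n^i|^2)$, the Riesz lower bound for $(\hat s_{z_n})_n$, but only the Bessel half is needed for $\textnormal{(c)}$.)

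Finally, for $d\ge2$ I would disprove both converses by explicit examples in the bidisc. For $\textnormal{(b)}\not\Rightarrow\textnormal{(a)}$, take a one-variable interpolating sequence $(a_k)$ in $\D$ and the product sequence $z_{k,\ell}:=(a_k,a_\ell)$ in $\D^2$; this is always interpolating for $\h^\infty_2$, since if $(e_k)$ denotes the bounded interpolating basis attached to $(a_k)$ and $h_k\in\h^\infty(\D)$ interpolates $(w_{k,\ell})_\ell$ on $(a_\ell)$, then $f(z,w):=\sum_k e_k(z)h_k(w)$ lies in $\h^\infty_2$ (because $\sum_k|e_k(z)|\le M$) and satisfies $f(z_{k,\ell})=w_{k,\ell}$; yet for an appropriate $(a_k)$ — uniformly separated, but whose nodes cluster just fast enough that the decreasing rearrangement of $\bigl(-\log\rho(a_k,a_j)\bigr)_{j\ne k}$ remains summable while failing to be summable against its index — the product $\prod_{(k',\ell')\ne(k,\ell)}\rho_G(z_{k,\ell},z_{k',\ell'})$ vanishes, so $\textnormal{(a)}$ fails. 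For $\textnormal{(c)}\not\Rightarrow\textnormal{(b)}$ one constructs a weakly separated sequence whose normalized kernels $(\hat s_{z_n})_n$ form a Bessel (indeed Riesz) system in $\h^2_d$ — so $\textnormal{(c)}$ holds — but which is not interpolating for $\h^\infty_d$; such sequences exist because, in contrast to Carleson's theorem, $\h^2_d$-interpolation is strictly weaker than $\h^\infty_d$-interpolation on the polydisc when $d\ge2$, reflecting the failure of the complete Pick (equivalently Toeplitz--corona) property of $\h^\infty_d$. The decisive difficulty in the whole argument is the weighted $\bar\partial$-construction behind $\textnormal{(a)}\Rightarrow\textnormal{(b)}$; the remaining steps are comparatively routine once the right objects are in hand.
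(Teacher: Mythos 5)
The paper does not prove Theorem~\ref{theo:poly:interp}: it is stated in the introduction as a result of Berndtsson, Chang and Lin, with the reference \cite{bern}, so there is no internal argument to compare your proposal against. What follows is therefore an assessment of the proposal on its own terms, against the original source and against the machinery the paper itself builds.

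Your outline of (a)$\Rightarrow$(b) via a weighted $\bar\partial$-estimate with a weight that is non-integrable exactly at the nodes and controlled by the products $\prod_{j\ne n}\rho_G(z_n,z_j)$ is indeed the Berndtsson--Chang--Lin approach, and as a sketch it is faithful; the hard work lives in the construction of $\varphi$ and the quantitative estimate, and you correctly identify that as the decisive difficulty. Two other points need attention. First, in (b)$\Rightarrow$(c) you pass from the open mapping theorem to a fixed family $(f_n)$ with $f_n(z_j)=\delta_{nj}$ and $\sum_n|f_n(\zeta)|\le M$ pointwise. The open mapping theorem gives, for each bounded target, \emph{some} interpolant of uniformly bounded norm, but not a single linear family $(f_n)$ with the pointwise $\ell^1$ bound; extracting the latter requires an additional normal families (Montel) argument, as in the equivalence (ii)$\Leftrightarrow$(iv) of the paper's Theorem~\ref{theo:diagonal}. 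Moreover, the ``duality argument against $\h^2_d$'' that is supposed to turn $\sum_n|f_n|\le M$ into the Carleson embedding is exactly the part you label a secondary technical point, but you do not give it, and it is not secondary. A cleaner route, and one more aligned with the machinery of this paper, avoids the $(f_n)$ altogether: interpolation plus the open mapping theorem bounds, for every $(w_n)$ in the unit ball of $\ell^\infty$, the operator on $\overline{\spa}\{s_{z_n}\}$ sending $s_{z_n}\mapsto\overline{w_n}\,s_{z_n}$ (it is the restriction of $M_\phi^*$ for some multiplier $\phi$ of norm $\le M$); by Theorem~\ref{theo:nikriesz}, (iii)$\Rightarrow$(i), the normalized kernels form a Riesz, in particular Bessel, system; and by \cite[Th.\ 9.5]{john} the Bessel property is precisely the Carleson condition.

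The genuine gap is the counterexample to (c)$\Rightarrow$(b). You write that weakly separated sequences with a Bessel system of kernels exist that fail to interpolate ``because \ldots $\h^2_d$-interpolation is strictly weaker than $\h^\infty_d$-interpolation \ldots reflecting the failure of the complete Pick property,'' but that is a restatement of the desired conclusion, not a construction. The failure of the complete Pick property for $\h^\infty_d$ does not, by itself, manufacture a concrete sequence with these properties; in \cite{bern} producing one is a substantial piece of work. Your (b)$\not\Rightarrow$(a) example, the Cartesian square $(a_k,a_\ell)_{k,\ell}$ of a one-variable interpolating sequence, is sound in outline (the tensor-product argument with $f(z,w)=\sum_k e_k(z)h_k(w)$ does give interpolation), but the claim that (a) fails needs to be made quantitative: one should show that $\sum_{(k',\ell')\neq(k,\ell)}\min\bigl(-\log\rho(a_k,a_{k'}),\,-\log\rho(a_\ell,a_{\ell'})\bigr)$ is unbounded for a suitable choice of $(a_k)$, which is true but not automatic from the vague clustering condition you describe.
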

$\h^2_d$ will be here the \emph{Hardy space} on the polydisc, that is, the reproducing kernel Hilbert space on $\D^d$ with kernel
\[
s_d(z, w):=\prod_{i=1}^d\frac{1}{1-\overline{w^i}z^i}\qquad z, w\in\D^d.
\]
Conditions \eqref{eqn:ss} and \eqref{eqn:ws} are separation conditions, both stated in terms of the so called \emph{Gleason distance} on the polydisc:
\[
\rho_G(z, w):=\max_{i=1,\dots d}\rho(z^i, w^i)\qquad z, w\in\D^d.
\]
It turns out that $\rho_G$ corresponds to separating points via bounded analytic function, since
\begin{equation}
\label{eqn:gleason}
\rho_G(z, w)=\sup\{|\phi(z)|\,|\, ||\phi||_\infty\leq1, \phi(w)=0\}.
\end{equation}
Throughout this note, \eqref{eqn:ss} will refer to {\bf strong separation} on the polydisc, while \eqref{eqn:ws} defines a {\bf weakly separated} sequence on the polydisc.\\
Even if Theorem \ref{theo:poly:interp} does not characterize interpolating sequences in the polydisc, the case $d=2$ provides a characterization of interpolating sequences by looking at the Euclidean geometry of those reproducing kernels Hilbert spaces whose multiplier algebra is $\h^\infty_2$. The main difference with the one-variable case is the fact that, rather than working with just the Szegö kernel, in order to obtain interpolating properties one has to consider a whole class of different kernels on the polydisc. In \cite{agler}, Agler and McCarthy characterized interpolating sequences in the bi-disc in terms of separation conditions on the class of so called {\bf admissible kernels}. A kernel $k$ on $\D^d$ is admissibile if the multiplications by the coordinates
\[
M_{z^1}, \dots, M_{z^d}
\]
form a set of commuting contractions on $\rkhs_k$. Let $\A_d$ be the set of all admissible kernels on $\D^d$, and let $\bound_d$ be the set of all kernels $k$ on the $d$-dimensional polydisc whose multiplier algebra coincide with $\h^\infty_d$. Since the coordinate functions are clearly in the unit ball of $\h^\infty$, we have that $\bound_d\subseteq\A_d$. Conversely, thanks to Ando's inequality \cite{ando}, for any kernel in $\bound_2$
\[
||\phi(T_1, T_2)||_{\bound(\rkhs_k)}\leq||\phi||_\infty
\] 
for any $\phi$ in $\h^\infty_2$ and for any pair $(T_1, T_2)$ of commuting contractions on $\rkhs_k$. Since
\[
M_\phi=\phi(M_{z^1}, M_{z^2})\qquad\phi\in\h^\infty_2
\]
we have that 
\[
\A_2=\bound_2.
\]
Namely, the class of admissible kernels coincides with the class of kernels on $\D^2$ whose multiplier algebra is $\h^\infty_2$.\\
\begin{ex}
Let $\alpha:=(\alpha_1, \dots, \alpha_d)$ be a $d$-tuple of positive integers. Then the kernel
\[
s^{\alpha}_w(z):=\prod_{i=1}^d\frac{1}{(1-\overline{w^i}z^i)^{\alpha_i}}\qquad z, w\in\D^d
\]
is admissible on $\D^d$. Indeed, multiplication by the coordinates being a set of contractions is equivalent to assert that, for any $z_1, \dots, z_N$ in $\D^d$,  the matrices
\[
[(1-\overline{z_{n}^i}z_{j}^i)k_{z_j}(z_n)]_{n, j=1}^N\qquad i=1, \dots, d
\]
are positive semi-definite.  Therefore, since the Szegö kernel is admissible and the Schur (point-wise) product of positive semi-definite matrices is positive semi-definite, $s^{\alpha}$ belongs to $\A_d$ for any $\alpha$.
\end{ex}
Let, for any kernel $k$ in $\A_2$
\[
S^{k}_Z:=\underset{n\in\N}{\overline{\spa}}\{k_{z_n}\}\subseteq\rkhs_k.
\]
Given a bounded sequence $(w_n)_{n\in\N}$, if there exits a function $\phi$ in the unit ball $\h^\infty_2$ such that 
\[
\phi(z_n)=w_n\qquad n\in\N,
\]
then, for any $k$ in $\A_2$, the operator
\[
T^k\colon S^{k}_Z\to S^{k}_Z
\]
such that $$T^k(k_{z_n})=\overline{w_n}k_{z_n}$$ is a contraction in $\rkhs_k$. Indeed, $T^k$ is the restriction of $M^*_\phi$ to $S^k_Z$ since, for any reproducing kernel Hilbert space $\rkhs_k$  on  domain $X$, the reproducing property of the kernel $k$ implies that every kernel function is an eigenvector of the adjoint of multiplication by any multiplier:
\[
\M_\phi^*(k_x)=\overline{\phi(x)}~k_x\qquad x\in X,\,\phi\in\M_k.
\]
Conversely, \cite{aglerpp}, if 
\[
\sup_{k\in\A_2}||T^k||_{\bound(\rkhs_k)}\leq1,
\] 
then \emph{each} $T^k$ is in fact the restriction of the adjoint of the multiplication by a function $\phi$ in the unit ball of $\h^\infty_2$, and $\phi(z_n)=w_n$. This, together with Theroem \ref{theo:nikriesz} below, gives the following characterization for interpolating sequences for the bidisk, which is part of a result in \cite{agler}:
\begin{theo}
\label{theo:bidisk}
Let $Z=(z_n)_{n\in\N}$ be a sequence in $\D^2$. Then $Z$ is interpolating for $\h^\infty_2$ if and only if there exists a constant $C\geq1$ such that, for any $k$ in $\A_2$, the sequence of normalized kernels $(\hat{k}_{z_n})_{n\in\N}$ is a Reisz sequence in $\rkhs_k$ with Riesz bound $C$.
\end{theo}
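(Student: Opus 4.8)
The plan is to read the statement as an equivalence between numerical quantities attached to $Z$: its interpolation constant for $\h^\infty_2$ on the one hand, and, on the other, the supremum over $k\in\A_2$ of the Riesz bounds of $(\hat k_{z_n})_{n\in\N}$. The link between the two is the norm of the diagonal operators $T^k$ associated with bounded targets, together with two facts recalled above. The first is the Pick-type characterization from \cite{agler} and \cite{aglerpp}: for a bounded target $(w_n)_{n\in\N}$ there exists $\phi$ in the unit ball of $\h^\infty_2$ with $\phi(z_n)=w_n$ for all $n$ if and only if, for every $k\in\A_2$, the operator $T^k$ on $S^k_Z$ determined by $T^k(k_{z_n})=\overline{w_n}\,k_{z_n}$ is a contraction; here the easy direction is the observation that such a $T^k$ is the restriction of $M_\phi^*$ to $S^k_Z$, while the converse is the content of \cite{aglerpp}. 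The second is Theorem \ref{theo:nikriesz}, which trades uniform boundedness of such diagonal operators for a Riesz bound, and conversely. I would also start by replacing ``$Z$ is interpolating'' with its quantitative form, obtained from the open mapping theorem applied to the restriction map $\phi\mapsto(\phi(z_n))_{n\in\N}$ from $\h^\infty_2$ to $\ell^\infty$: there is a finite $M\ge1$ such that every $(w_n)_{n\in\N}$ in the closed unit ball of $\ell^\infty$ is of the form $(\phi(z_n))_{n\in\N}$ for some $\phi$ with $||\phi||_\infty\le M$.

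For the implication that interpolation forces the Riesz bound, suppose $Z$ is interpolating with constant $M$ as above. Fix a target $(w_n)_{n\in\N}$ in the closed unit ball of $\ell^\infty$ and a kernel $k\in\A_2$, and choose $\phi$ with $||\phi||_\infty\le M$ and $\phi(z_n)=w_n$ for all $n$. Since each kernel function $k_{z_n}$ is an eigenvector of $M_\phi^*$ with eigenvalue $\overline{\phi(z_n)}=\overline{w_n}$, the subspace $S^k_Z$ is invariant under $M_\phi^*$, and the operator $D^k:=M_\phi^*|_{S^k_Z}$ satisfies $D^k(k_{z_n})=\overline{w_n}\,k_{z_n}$ and $||D^k||\le||M_\phi||=||\phi||_\infty\le M$. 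As this bound is uniform over $k\in\A_2$ and over unit-ball targets, Theorem \ref{theo:nikriesz} yields a constant $C$, depending only on $M$, such that $(\hat k_{z_n})_{n\in\N}$ is a Riesz sequence in $\rkhs_k$ with bound $C$ for every $k\in\A_2$.

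For the converse, assume $(\hat k_{z_n})_{n\in\N}$ is a Riesz sequence with bound at most $C$ for every $k\in\A_2$; in particular the $k_{z_n}$ are linearly independent and $(\hat k_{z_n})_{n\in\N}$ is a Riesz basis of $S^k_Z$. Fix a unit-ball target $(w_n)_{n\in\N}$ and set $M:=C^2$. For each $k\in\A_2$, the operator $D^k$ on $S^k_Z$ with $D^k(k_{z_n})=\overline{w_n}\,k_{z_n}$ acts diagonally with entries $\overline{w_n}$ in this Riesz basis, so expanding a finite combination $f=\sum_n a_n\hat k_{z_n}$ and using the upper Riesz inequality once and the lower one once gives $||D^kf||^2\le C^2\sum_n|a_n w_n|^2\le C^4||f||^2$; hence $||D^k||\le C^2=M$. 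Therefore the operator $T^k$ attached to the rescaled target $(w_n/M)_{n\in\N}$, which is exactly $M^{-1}D^k$, is a contraction for every $k$, so $\sup_{k\in\A_2}||T^k||\le1$ and the converse half of the Pick characterization produces $\phi$ in the unit ball of $\h^\infty_2$ with $\phi(z_n)=w_n/M$ for all $n$. Then $M\phi$ interpolates $(w_n)_{n\in\N}$ with $||M\phi||_\infty\le M$, and since the target was arbitrary in the unit ball of $\ell^\infty$ this shows $Z$ is interpolating, with interpolation constant at most $C^2$.

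I expect the main obstacle to be the last step of the first implication, namely the passage from uniform boundedness of all the diagonal operators $D^k$ to a uniform Riesz bound for the normalized kernels, which is precisely where Theorem \ref{theo:nikriesz} is invoked; the remaining work is the bookkeeping of rescaling targets by the interpolation constant, keeping the quantifiers over $k\in\A_2$ and over unit-ball targets aligned, and recording the elementary points that the $k_{z_n}$ are linearly independent (so that the $D^k$ and $T^k$ are well defined) and that a uniform Riesz bound already forces the points $z_n$ to be pairwise distinct, so that the two sides of the equivalence concern the same class of sequences.
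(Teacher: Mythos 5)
Your proposal is correct and takes essentially the same route the paper indicates for this theorem (which it cites from \cite{agler} and only sketches): one combines the Agler extension theorem from \cite{aglerpp} — interpolation of a bounded target is equivalent to uniform contractivity over $\A_2$ of the diagonal operators $T^k$ on $S^k_Z$ — with Theorem \ref{theo:nikriesz}, which converts uniform boundedness of such diagonal operators into a uniform Riesz bound for $(\hat{k}_{z_n})$. Your use of the open mapping theorem to quantify the interpolation constant and the rescaling by $M=C^2$ are the standard quantitative glue between the two halves, and the computation $\|D^k\|\le C^2$ from the upper and lower Riesz inequalities is exactly the bound recorded in item (iii) of Theorem \ref{theo:nikriesz}.
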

As we will see, the  key for the identification between Riesz systems of kernels functions and interpolating sequences for those reproducing kernel Hilbert spaces with extension properties is the following characterization of the Riesz system condition \cite[Th. 3.1.4 ]{nik}:
\begin{theo}
\label{theo:nikriesz}
Let $H=(H_n)_{n\in\N}$ be a sequence of closed sub-spaces of a Hilbert space $\rkhs$. The following are equivalent:
\begin{description}
\item[(i)] $H$ is a Riesz system with Riesz bound $C$;
\item[(ii)] For any sequence of linear functions $(\chi_n)_{n\in\N}$ such that 
\[
\chi_n\colon H_n\to H_n\qquad n\in\N
\]
and $\sup_{n\in\N}||\chi_n||\leq 1$, then 
\[
\chi\colon\underset{n\in\N}{\overline{\spa}}\{H_n\}\to\underset{n\in\N}{\overline{\spa}}\{H_n\}
\]
such that
\[
\chi_{|H_n}=\chi_n
\]
 is bounded by $C$.
\item[(iii)] For any sequence $(w_n)_{n\in\N}$ in the unit ball of $l^\infty$ the linear function
\[
\mu\colon\underset{n\in\N}{\overline{\spa}}\{H_n\}\to\underset{n\in\N}{\overline{\spa}}\{H_n\}
\]
such that $\mu_{|H_n}=w_n~Id_{H_n}$ is bounded by $C^2$.
\item[(iv)] For any finite subset $\sigma$ of $\N$, the linear function $P_\sigma\colon\underset{n\in\N}{\overline{\spa}}\{H_n\}\to\underset{n\in\N}{\overline{\spa}}\{H_n\}$  such that
\[
P_\sigma(x):=\begin{cases}
x\quad\text{if}&\quad x\in H_j, j\in\sigma\\
0\quad\text{if}&\quad x\in H_j, j\notin\sigma
\end{cases}
\]
is bounded by $\frac{1}{6}~ C^2$.
\end{description}
\end{theo}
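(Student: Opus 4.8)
The plan is to route all four conditions through a single object, the \emph{summation operator}
\[
V\colon\bigoplus_{n\in\N}H_n\longrightarrow\underset{n\in\N}{\overline{\spa}}\{H_n\},\qquad V\big((h_n)_{n\in\N}\big)=\sum_{n\in\N}h_n,
\]
defined on the orthogonal direct sum of the $H_n$. The first step is to recognize that condition (i) is nothing but a two-sided norm estimate for $V$: writing an arbitrary $g_n\in H_n$ as $g_n=\|g_n\|\,\hat{g}_n$ and using homogeneity, \eqref{eqn:rs} (which is stated only for unit vectors) is equivalent to
\[
\tfrac{1}{C^2}\sum_{n}\|g_n\|^2\le\Big\|\sum_n g_n\Big\|^2\le C^2\sum_n\|g_n\|^2
\]
for every finitely supported family $(g_n)_n$; the upper (Bessel) inequality then shows that $V$ extends to a bounded operator on all of $\bigoplus_n H_n$ with range $\overline{\spa}\{H_n\}$, and the lower inequality says that $V$ is bounded below, so that $V$ is an isomorphism onto its range with $\|V\|\le C$ and $\|V^{-1}\|\le C$.

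Granting this reformulation, (i)$\implies$(ii) is a conjugation: if $\chi_n\colon H_n\to H_n$ satisfy $\sup_n\|\chi_n\|\le1$, then $D:=\bigoplus_n\chi_n$ has norm $\sup_n\|\chi_n\|\le1$ on $\bigoplus_n H_n$, and the operator $\chi$ in the statement is exactly $V\,D\,V^{-1}$ on $\overline{\spa}\{H_n\}$, whence $\|\chi\|\le\|V\|\,\|D\|\,\|V^{-1}\|$. The implications (ii)$\implies$(iii)$\implies$(iv) are then immediate specializations: for (iii) take $\chi_n=w_n\,\id_{H_n}$, which satisfies $\|\chi_n\|=|w_n|\le1$, so $\mu=\chi$; for (iv) take, for a finite $\sigma$, the sequence $w$ with $w_n=1$ for $n\in\sigma$ and $w_n=0$ otherwise, which lies in the closed unit ball of $l^\infty$ and produces precisely the oblique projection $P_\sigma$.

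The substantive direction is (iv)$\implies$(i): from $M:=\sup_{\sigma\text{ finite}}\|P_\sigma\|<\infty$ one must recover the two-sided bound on $V$. The engine is sign-averaging combined with the parallelogram identity, which in a Hilbert space is an equality,
\[
\mathbb{E}_\varepsilon\Big\|\sum_n\varepsilon_n g_n\Big\|^2=\sum_n\|g_n\|^2,
\]
the average being over independent signs $\varepsilon_n\in\{\pm1\}$. For a finitely supported family one has $\sum_n\varepsilon_n g_n=(2P_\sigma-\id)\sum_n g_n$ with $\sigma=\{n\colon\varepsilon_n=1\}$, hence $\big\|\sum_n\varepsilon_n g_n\big\|\le(2M+1)\big\|\sum_n g_n\big\|$ uniformly in $\varepsilon$; averaging yields $\sum_n\|g_n\|^2\le(2M+1)^2\big\|\sum_n g_n\big\|^2$, i.e.\ the lower bound. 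The upper bound follows dually: since $P_{\{n\}}h_n=h_n$, one has $\langle h_n,y\rangle=\langle h_n,P_{\{n\}}^* y\rangle$, so that $\big\|\sum_n h_n\big\|\le\big(\sum_n\|h_n\|^2\big)^{1/2}\big(\sum_n\|P_{\{n\}}^* y\|^2\big)^{1/2}$ for every unit vector $y$, and $\sum_n\|P_{\{n\}}^* y\|^2=\big\langle\big(\sum_n P_{\{n\}}P_{\{n\}}^*\big)y,y\big\rangle$ with $\sum_n P_{\{n\}}P_{\{n\}}^*=\mathbb{E}_\varepsilon\big(\sum_n\varepsilon_n P_{\{n\}}\big)\big(\sum_n\varepsilon_n P_{\{n\}}\big)^*$, of norm at most $(2M+1)^2$ because $\sum_n\varepsilon_n P_{\{n\}}$ again acts as $\pm\id$ on each $H_n$. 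This gives $\|V\|,\|V^{-1}\|\le 2M+1$ and closes the cycle (i)$\implies$(ii)$\implies$(iii)$\implies$(iv)$\implies$(i).

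I expect the logical skeleton above to be routine; the real work is the constant bookkeeping that makes the bounds come out exactly as $C$, $C^2$ and $\tfrac16 C^2$. The delicate point is the last direction: one must pass from ``all $P_\sigma$ bounded by $M$'' first to a bound on the full family of multiplier operators $\mu$ of (iii)---splitting $w_n$ in the closed unit disc into its real and imaginary parts, each valued in $[-1,1]$, writing each as an average of $\pm1$ sequences, and invoking $\|2P_\sigma-\id\|\le 2M+1$---and only then back to the Riesz bound; the cumulative numerical loss in this round trip is precisely what forces the factor $\tfrac16$ into condition (iv).
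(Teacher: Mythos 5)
The paper states Theorem \ref{theo:nikriesz} as a citation of Theorem 3.1.4 in Nikolski's book \cite{nik} and gives no proof, so there is no in-text argument to compare yours against; I can only assess the proposal on its own.

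On its merits your proposal follows what is essentially the standard route to this equivalence, and the logical skeleton is sound: recasting (i) as a two-sided norm estimate for the synthesis operator $V$ on the orthogonal direct sum, obtaining (i)$\implies$(ii) by the conjugation $\chi=VDV^{-1}$, (ii)$\implies$(iii)$\implies$(iv) by specializing the multiplier first to scalars $w_n\,\id$ and then to indicator sequences, and recovering (iv)$\implies$(i) by the sign-averaging/parallelogram argument, with the dual computation $\sum_n P_{\{n\}}P_{\{n\}}^*=\mathbb{E}_\varepsilon\big(\sum_n\varepsilon_nP_{\{n\}}\big)\big(\sum_n\varepsilon_nP_{\{n\}}\big)^*$ supplying the Bessel half. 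The genuine gap — and you flag it yourself — is that you do not actually obtain the constants in the statement: the conjugation yields $\|\chi\|\le\|V\|\,\|V^{-1}\|\le C^2$, not the $C$ claimed in (ii), and the averaging recovers a Riesz bound of $2M+1$ from $\sup_\sigma\|P_\sigma\|\le M$, which does not invert to the factor $\frac16\,C^2$ in (iv). Your closing paragraph sketches how one might rebalance the round trip to hit the stated factors but does not carry it out, so the theorem \emph{as written}, with its particular numerical bounds, is not yet proved by the proposal. Two smaller points worth recording: first, (iv) (and your use of $P_{\{n\}}$) only makes sense if the $(H_n)$ are minimal — that every finite relation $\sum_n h_n=0$ with $h_n\in H_n$ forces all $h_n=0$ — which is implicit in asserting that $P_\sigma$ is a well-defined linear map on $\spa\{H_n\,|\,n\in\N\}$, and should be stated before the averaging step; second, $P_{\{n\}}^*$ acts on $\underset{n\in\N}{\overline{\spa}}\{H_n\}$ rather than on all of $\rkhs$, so the supremum defining $\big\|\sum_n h_n\big\|$ should be taken over unit $y$ in that subspace, which is harmless since $\sum_n h_n$ already lies there.
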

Let $M=(M^1,\dots, M^d)$ be a $d$-tuple of matrices of size $m$, and let $\phi$ be a function in $\h^\infty_d$. In order for 
\[
\phi(M)=\phi(M^1, \dots, M^d)
\]
to makes sense the matrices in $M$ must be {\bf commuting}, so that the value of a bounded analytic function at $M$ can be expressed via the matrix version of Cauchy integral formula:
\begin{equation}
\label{eqn:cauchymat}
\phi(M):=\frac{1}{(2\pi i)^d}\int_{\T^d}f(\xi^1, \dots, \xi^d)(\xi^1~Id-M^1)^{-1}\dots(\xi^d~Id-M^d)^{-1}\, d\xi^1\dots d\xi^d,
\end{equation}
where $\T^d$ is the $d$-dimensional torus. As for the one dimensional case, \eqref{eqn:cauchymat} makes sense provided that the {\bf joint spectrum} of $M$ belongs to $\D^d$. In order to define the joint spectrum of $M$, recall that commuting matrices preserve each others invariant sub-spaces: consequently, $M$ can be \emph{jointly upper-triangularizable}, that is, there exits an $m\times m$ non singular matrix $P$ such that 
\begin{equation}
\label{eqn:jointr}
N^i=P^{-1}M^{i}P
\end{equation}
is upper triangular, for any $i=1, \dots, d$. The joint spectrum of $M$ is the (multi)-set
\[
\sigma(M)=\{(N^1_{j, j}, \dots, N^d_{j, j})\,|\, j=1, \dots, d\}\subset\C^d.
\]
In particular, $\lambda=(\lambda^1, \dots, \lambda^d)$ belongs to $\sigma(M)$ if and only if there exists a vector $x$ in $\C^m$ that is a joint eigenvector for the components of $M$:
\[
M^ix=\lambda^i x\qquad i=1, \dots, d.
\]

\begin{defi}
\label{defi:dintmat}
 A sequence $A=(A_n)_{n\in\N}$ of $d$-tuples of commuting matrices with joint spectra in $\D^d$ is {\bf interpolating} for $\h^\infty_d$ if for any bounded target sequence $(\phi_n)_{n\in\N}$ in $\h^\infty_d$ there exists a bounded analytic function $\phi$ on the polydisc such that
\[
\phi(A_n)=\phi_n(A_n)\qquad n\in\N.
\]
\end{defi}
In order to separate the matrices in $A$ we will extend \eqref{eqn:gleason} to this matrix setting by defining the Gleason distance $\rho_G$ between two $d$-tuples of commuting matrices $M$ and $N$ (of eventual different dimension) to be the supremum of all $|r|>0$ such that there exists a function $\phi$ in the unit ball of $\h^\infty$ so that 
\[
\phi(N)=0,\qquad \phi(M)=r~Id.
\]
 We will say then that the sequence $A$ is {\bf weakly separated} if 
\[
\inf_{n\ne j}\rho_G(A_n, A_j)>0,
\]
whereas \eqref{eqn:ss} extends to define {\bf strongly separated} sequences of commuting $d$-tuples by
\[
\inf_{n\in\N}\prod_{j\ne n}\rho_G(A_n, A_j)>0.
\]
If $\tau$ is any subset of $\N$ not containing $n$, then the distance between $A_n$ and $A_\tau:=(A_j)_{j\in\tau}$ is the inverse of the least norm of a function in $\h^\infty_d$ that vanishes on $(A_j)_{j\in\N}$ and it's the identity on $A_n$:
\begin{equation}
\label{eqn:dsepseq}
\rho_G(A_n, A_\tau):=\frac{1}{\inf\{||\phi||_\infty\,|\,\phi(A_n)=Id, \phi(A_j)=0, j\in\tau\}}.
\end{equation}
As for the one dimensional case, one can relate interpolation conditions for the sequence $A$ to separation condition on well chosen sub-spaces of some reproducing kernel Hilbert spaces, and the main difference with the theory in one variable is that one has to consider \emph{all} the kernels in $\bound_d$, rather than only the Szegö kernel. Let then $k$ be a kernel in $\bound_d$, and let $M$ be a $d$-tuple of commuting matrices with joint spectra in $\D^d$. Assume that a function $f$ in $\rkhs_k$ vanishes at $M$: without loss of generality, we can assume thanks to \eqref{eqn:jointr} that each $M^i$ is upper triangular. Let $\{z_1, \dots, z_d\}$ be the joint spectrum of $M$. For any $\xi$ in $\T^d$, 
\begin{equation}
\label{eqn:cauchy:factor}
(\xi^i~Id-M^i)^{-1}
\end{equation}
has on its main diagonal multiples of
\[
\frac{1}{(\xi^i- z^i_j )}\qquad j=1, \dots, d
\]
while on its second to main diagonal has a linear combination of factors of the form
\[
\frac{1}{(\xi^i- z^i_j )^2}\qquad j=1, \dots, d,
\]
where such linear combination depends exclusively on the algebraic structure of $M$, and not on the kernel $k$. In this fashion, the $l$-th-to-main diagonal of \eqref{eqn:cauchy:factor} will contain linear combinations of terms of the form
\[
\frac{1}{(\xi^i- z^i_j )^l}\qquad j=1, \dots, d. 
\]
Hence, thanks to \eqref{eqn:cauchymat}, $f(M)=0$ if and only if $f$ vanishes to its joint spectrum, and so do some linear combinations of its partial derivatives at the points of its joint spectrum. 
\begin{ex}
\label{ex:vanishing}
Let $\lambda$ and $\gamma$ be in $\D$, and set
\[
M=\left(\begin{bmatrix}
\lambda& 2\\
0& \lambda
\end{bmatrix} , \begin{bmatrix}
\gamma & 1\\
0 & \gamma
\end{bmatrix}\right).
\]
Then the joint spectrum of $M$ consists in just the point $z=(\lambda, \gamma)$ in $\D^2$ and, for any $f$ holomorphic in $\D^2$, 
\[
f(M)=\begin{bmatrix}
f(z) & 2\partial_1 f(z)+\partial_2f(z)\\
0 & f(z)
\end{bmatrix}.
\]
Thus $f(M)=0$ if and only both $f(z)=0$ and $2\frac{\partial f}{\partial z^1}(z)+\frac{\partial f}{\partial z^2}(z)=0.$
\end{ex}
Let $A=(A_n)_{n\in\N}$ be a sequence of commuting $d$-tuples with joint spectra in $\D^d$. For any positive integer $n$ and for any kernel $k$ in $\bound_d$, define
\begin{equation}
\label{eqn:hnd}
H_n^k:=\rkhs_k\ominus\{f\in\rkhs_k | f(A_n)=0\}\subseteq\rkhs_k. 
\end{equation}
By the above discussion $H^k_n$ is a finite dimensional sub-space of $\rkhs_k$ spanned by the kernels at the points of the joint spectrum of $A_n$, together with some linear combinations of kernel functions that represent partial derivatives at the joint spectrum of $A_n$. Moreover, if the matrices in $A_n$ have size $s_n$, then
\[
H^{k}_n=\{K_{A_n}(u, v)\,|\, u, v \in\C^{s_n}\},
\]
where, if $(e_j)_{j\in\N}$ is any orthonormal basis of $\rkhs_k$, $K_{A_n}(u, v)$ is defined by 
\begin{equation}
\label{eqn:dkernelmat}
K_{A_n}(u, v):=\sum_{j\in\N}\braket{v, e_j(A_n)u}_{\rkhs_k} e_j.
\end{equation}
In particular, the function $K_{A_n}(u, v)$ satisfies
\begin{equation}
\label{eqn:repd}
\begin{split}
\braket{f, K_{A_n}(u, v)}_{\rkhs_k}&=\sum_{j\in\N}\braket{e_j(A_n)u, v}_{\C^{s_n}}\braket{f, e_j}_{\rkhs_k}\\
&=\sum_{j\in\N}\braket{\braket{f, e_j}_{\rkhs_k}e_j(A_n) u, v}_{\C^{s_n}}\\
&=\left<\left(\sum_{j\in\N}\braket{f, e_j}_{\rkhs_k}e_j(A_n)\right)u, v\right>_{\C^{s_n}}\\
&=\braket{f(A_n)u, v}_{\C^{s_n}}\qquad f\in\rkhs_k,
\end{split}
\end{equation}
which says that \eqref{eqn:dkernelmat} does not depend on the choice of the basis $(e_n)_{n\in\N}$. Equation \eqref{eqn:repd} works as a \emph{reproducing property} for $K_{A_n}(u, v)$ and implies that $K_{A_n}(u, v)$ is linear in $v$ and conjugate-linear in $u$. Most importantly, \eqref{eqn:repd} implies that
\begin{equation}
\label{eqn:invariantd}
M_\phi^*(K_{A_n}(u, v))=K_{A_n}(u, \phi(A_n)^*v)\qquad \phi\in\h^\infty_d.
\end{equation}
It is worth noticing that, for any positive integer $n$, some of the $K_{A_n}(u, v)$ might repeat while varying $u$ and $v$. The following Lemma describes the relation on the pairs $(u_1, v_1)$ and $(u_2, v_2)$ in order for this to happen. If $l$ is a multi-index in $\N^d$, and $N$ is a $d$-tuple of commuting matrices, we will write
\[
N^l:=(N^1)^{l_1}\dots(N^d)^{l_d}.
\]
\begin{lemma}
\label{lemma:eqrel}
Let $k$ be a kernel in $\bound_d$, and let  $N$ be a $d$-tuple of $m\times m$ commuting matrices with spectra in $\D^d$. Let $u_1, v_1, u_2$ and $v_2$ be in $\C^m$. Then $K_N(u_1, v_1)=K_N(u_2, v_2)$ if and only if
\begin{equation}
\label{eqn:eqrel}
\braket{N^lu_1, v_1}_{\C^m}=\braket{N^lu_2, v_2}_{\C^m}\qquad l\in\N^d.
\end{equation}
\end{lemma}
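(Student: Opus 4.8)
The plan is to reduce the statement, via the reproducing identity \eqref{eqn:repd}, to a statement about the finite‑dimensional commutative algebra generated by the components of $N$. Set $g_i:=K_N(u_i,v_i)\in\rkhs_k$ for $i=1,2$. By \eqref{eqn:repd}, $\braket{f,g_i}_{\rkhs_k}=\braket{f(N)u_i,v_i}_{\C^m}$ for every $f\in\rkhs_k$, and since two vectors in a Hilbert space coincide precisely when they pair equally against every vector, $g_1=g_2$ if and only if
\[
\braket{f(N)u_1,v_1}_{\C^m}=\braket{f(N)u_2,v_2}_{\C^m}\qquad\text{for all }f\in\rkhs_k.
\]
So the task becomes to show that this last condition is equivalent to \eqref{eqn:eqrel}.

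For the implication \eqref{eqn:eqrel}$\,\Rightarrow\,$the displayed condition, I would first note that by linearity \eqref{eqn:eqrel} gives $\braket{p(N)u_1,v_1}_{\C^m}=\braket{p(N)u_2,v_2}_{\C^m}$ for every polynomial $p=\sum_l c_l z^l$. To pass to an arbitrary $f\in\rkhs_k$, recall that $f$ is holomorphic on $\D^d\supseteq\sigma(N)$; upper‑triangularizing $N$ as in \eqref{eqn:jointr} and invoking the analysis of \eqref{eqn:cauchy:factor} preceding the statement (equivalently, the holomorphic functional calculus), $f(N)$ is a fixed linear combination — depending only on $N$, not on $f$ — of finitely many partial derivatives of $f$ evaluated at finitely many points of $\D^d$. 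Consequently $\{f\in\rkhs_k:f(N)=0\}$ has finite codimension and is cut out by finitely many jet conditions, so by Hermite interpolation there is a polynomial $p$ with $f(N)=p(N)$; then $\braket{f(N)u_1,v_1}_{\C^m}=\braket{p(N)u_1,v_1}_{\C^m}=\braket{p(N)u_2,v_2}_{\C^m}=\braket{f(N)u_2,v_2}_{\C^m}$, as wanted.

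For the converse, assume $K_N(u_1,v_1)=K_N(u_2,v_2)$, i.e.\ the displayed condition holds. Since $k\in\bound_d$, the constant function lies in $\rkhs_k$ and each coordinate multiplier acts boundedly on $\rkhs_k$, so every monomial $z^l=M_{z^1}^{l_1}\cdots M_{z^d}^{l_d}1$ belongs to $\rkhs_k$ and, the $N^i$ commuting, $z^l(N)=N^l$. Substituting $f=z^l$ into the displayed condition produces exactly \eqref{eqn:eqrel}.

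The genuinely delicate point is the middle step: verifying that $f(N)$, for $f\in\rkhs_k$, is determined by a finite jet of $f$, so that $f$ may be replaced by an interpolating polynomial. This rests on the triangularization computation sketched just before the lemma (and on $H_n^k$ being finite dimensional); everything else — the Hilbert‑space observation, the Hermite interpolation remark, and the identity $z^l(N)=N^l$ — is routine.
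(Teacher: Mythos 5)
Your proof is correct and reaches the same two intermediate claims the paper does, but the route through them is genuinely different. Both you and the paper begin by using \eqref{eqn:repd} to reduce the statement to: $K_N(u_1,v_1)=K_N(u_2,v_2)$ iff $\braket{f(N)u_1,v_1}=\braket{f(N)u_2,v_2}$ for all $f\in\rkhs_k$. From there you diverge. For the direction ``\eqref{eqn:eqrel} implies kernel equality,'' the paper simply says to expand $f$ in its power series and invoke \eqref{eqn:eqrel} termwise; your Hermite-interpolation argument (produce a polynomial $p$ with the same finite jet as $f$ on $\sigma(N)$, so $f(N)=p(N)$, then use linearity) accomplishes the same reduction but sidesteps the convergence of $\sum c_l N^l$ altogether, which is arguably cleaner. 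For the direction ``kernel equality implies \eqref{eqn:eqrel},'' the paper does \emph{not} test against the monomials $z^l$ directly: it applies the adjoint multiplier identity \eqref{eqn:invariantd} to get $K_N(u_1,(N^l)^*v_1)=K_N(u_2,(N^l)^*v_2)$ and then pairs both sides against a \emph{single} polynomial $\phi$ with $\phi(N)=\id$. That maneuver only requires one such $\phi$ to lie in $\rkhs_k$, whereas your substitution $f=z^l$ requires \emph{every} monomial to lie in $\rkhs_k$, which you justify by asserting $1\in\rkhs_k$. That assertion is not an immediate consequence of $k\in\bound_d$ (the multiplier algebra being $\h^\infty_d$ speaks to which functions multiply $\rkhs_k$ into itself, not to which functions are members of $\rkhs_k$), so your version leans a bit harder on an unstated normalization; but the paper's own step also quietly needs $\phi\in\rkhs_k$, so this is a difference of degree rather than a genuine gap. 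Net: your proof is valid; the interpolation trick is a nice alternative to the power-series gloss, while the paper's use of $M^*_{z^l}$ on the kernel side is the more economical way to extract \eqref{eqn:eqrel} from the kernel identity.
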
 
\begin{proof}
Let us first assume that $K_N(u_1, v_1)=K_N(u_2, v_2)$. Thus, since each monic monomial $z^l$ belongs to $\h^\infty_d$ and thanks to \eqref{eqn:invariantd},
\[
K_N(u_1, (N^l)^*v_1)=M^*_{z^l}(K_N(u_1, v_1))=M^*_{z^l}(K_N(u_2, v_2))=K_N(u_2, (N^l)^*v_2).
\]
Therefore, if $\phi$ is a polynomial such that $\phi(N)=Id$, the reproducing property in \eqref{eqn:repd} implies that
\[
\braket{u_1, (N^l)^*v_1}_{\C^m}=\braket{\phi, K_N(u_1, (N^l)^*v_1)}_{\rkhs_k}=\braket{\phi, K_N(u_2, (N^l)^*v_2)}_{\rkhs_k}=\braket{u_2, (N^l)^*v_2}_{\C^m}.
\]
Conversely, assume that \eqref{eqn:eqrel} holds. We have that $K_N(u_1, v_1)=K_N(u_2, v_2)$ if and only if their inner product with any $f$ in $\rkhs_k$ coincide:
\[
\braket{u_1, f(N)^*v_1}_{\C^m}=\braket{f, K_N(u_1, v_1)}_{\rkhs_k}=\braket{f, K_N(u_2, v_2)}_{\rkhs_k}=\braket{u_2, f(N)^*v_2}_{\C^m}\qquad f\in\rkhs_k.
\]
The rightmost and the leftmost hand side of the above equation coincide, thanks to \eqref{eqn:eqrel} and modulo writing $f$ in its power series.
\end{proof}
Observe that \eqref{eqn:eqrel} does {\bf not} depend on the kernel $k$ that we chose in $\bound_d$, but it depends exclusively on the algebraic structure of $M$.
\begin{ex}
Let $M$ be the pair of commuting matrices in Example \ref{ex:vanishing}. If $e_1$ and $e_2$ are the elements of the canonical basis of $\C^2$, then 
\[
\begin{split}
&K_M(e_1, e_1)=K_M(e_2, e_2)=k_z,\\
&K_M(e_1, e_2)=2k^{(1, 0)}_z+k^{(0,1)}_z\\
&K_M(e_2, e_1)=0,
\end{split}
\]
where 
\[
k_z^{(1, 0)}:=\frac{\partial k_z}{\partial{\overline{z}^1}}
\]
and
\[
k_z^{(0, 1)}:=\frac{\partial k_z}{\partial{\overline{z}^2}}
\]
are the kernels in $\rkhs_k$ that represent, respectively, one derivative in the first variable and one derivative in the second variable.
\end{ex}
In Section \ref{sec2}, we will extend Theorem \ref{theo:bidisk} to pairs of commuting matrices by replacing normalized admissible kernels with the  collections $(H^{k}_n)_{n\in\N}$:
\begin{theo}
\label{theo:bidiskmat}
A sequence of pairs of commuting matrices is interpolating if and only if there exists a positive $C$ such that, for any admissible kernel $k$ on the bi-disc, the sequence
\[
H^k=(H^{k}_n)_{n\in\N}
\]
is a Riesz system in $\rkhs_k$ with Riesz bound $C$. 
\end{theo}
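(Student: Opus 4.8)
The plan is to run the same two-sided argument that underlies Theorem \ref{theo:bidisk}, but with the finite-dimensional spaces $H^k_n$ playing the role of the one-dimensional spans of kernel functions, and with Theorem \ref{theo:nikriesz} supplying the dictionary between the Riesz-system condition and boundedness of the multiplier-type operators. First I would treat the easy direction: assume $A=(A_n)_{n\in\N}$ is interpolating. By the closed graph theorem (applied to the linear map sending a bounded target $(\phi_n)_n$ to the interpolant modulo those analytic functions vanishing on all $A_n$), there is a uniform bound: every bounded target of norm $\le 1$ can be matched by some $\phi\in\h^\infty_2$ with $\|\phi\|_\infty\le C$. Now fix an admissible kernel $k$ on $\D^2$, which by Ando's inequality means $\M_k=\h^\infty_2$ isometrically. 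Given $(w_n)_n$ in the unit ball of $l^\infty$, interpolate the constant targets $\phi_n\equiv w_n$ by some $\phi$ with $\|\phi\|_\infty\le C$; then $\phi(A_n)=w_n\,\id$, so by \eqref{eqn:invariantd} the operator $M_\phi^*$ restricts on each $H^k_n$ to multiplication by $\overline{w_n}$. Hence the operator $\mu$ of Theorem \ref{theo:nikriesz}(iii) is the restriction of $M_\phi^*$ to $\overline{\spa}\{H^k_n\}$, so $\|\mu\|\le\|\phi\|_\infty\le C$; by (iii)$\Rightarrow$(i) of Theorem \ref{theo:nikriesz}, $H^k=(H^k_n)_n$ is a Riesz system with Riesz bound $\sqrt{C}$, uniformly in $k$.

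For the converse, suppose there is a $C$ so that $(H^k_n)_n$ is a Riesz system with bound $C$ for every admissible $k$. Fix a bounded target $(\phi_n)_n$ in $\h^\infty_2$ with $\sup_n\|\phi_n\|_\infty\le 1$. For each $n$, \eqref{eqn:invariantd} shows that $\phi_n(A_n)^*$ acts on $H^k_n$ (more precisely $M_{\phi_n}^*$ maps $H^k_n$ into itself, since $K_{A_n}(u,v)\mapsto K_{A_n}(u,\phi_n(A_n)^*v)$), and this restriction has norm $\le 1$ because $\M_k=\h^\infty_2$ isometrically. So the $\chi_n$ of Theorem \ref{theo:nikriesz}(ii) are contractions, and (i)$\Rightarrow$(ii) gives a bounded operator $T^k$ on $S^k_Z:=\overline{\spa}\{H^k_n\}\subseteq\rkhs_k$ with $\|T^k\|\le C$, agreeing with $M_{\phi_n}^*$ on each $H^k_n$. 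The crucial point is that $S^k_Z$ is coinvariant for $M_{z^1},M_{z^2}$ — indeed each $\{f\in\rkhs_k: f(A_n)=0\}$ is an ideal, hence invariant for multiplication, so its complement is coinvariant, and the intersection/closed-span passes through — so $T^k$ is a contraction commuting with the compressions of the two coordinate multipliers to the coinvariant subspace $S^k_Z$. Appealing to the Agler--McCarthy/commutant-lifting machinery of \cite{agler,aglerpp} (this is the analogue of the implication used just before Theorem \ref{theo:bidisk}: if $\sup_{k\in\A_2}\|T^k\|\le 1$ then each $T^k$ lifts to $M_\phi^*$ for a common $\phi$ in the unit ball of $\h^\infty_2$), after rescaling by $C$ we obtain a single $\phi\in\h^\infty_2$, $\|\phi\|_\infty\le C$, whose adjoint multiplier restricts to $T^k$ on every $S^k_Z$. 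Testing against the reproducing property \eqref{eqn:repd} then forces $\phi(A_n)=\phi_n(A_n)$ for all $n$, so $A$ is interpolating.

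I expect the main obstacle to be the converse direction, specifically the step that upgrades "$\sup_{k\in\A_2}\|T^k\|\le C$" to the existence of a \emph{single} interpolating function $\phi$. In the scalar case (Theorem \ref{theo:bidisk}) this is exactly where the Agler--McCarthy characterization and the realization/extension theorem for $\h^\infty_2$ enter, and one must check that the matrix modification is legitimate: the spaces $H^k_n$ are no longer one-dimensional, they encode derivative data through the off-diagonal structure of $(\xi^i\id-A_n^i)^{-1}$, and the operator $T^k$ is prescribed as a \emph{block}-diagonal map $\bigoplus_n M_{\phi_n}^*|_{H^k_n}$ rather than a scalar multiple of the identity on each summand. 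One has to verify that this $T^k$ still intertwines correctly with the coordinate compressions so that the Pick-type interpolation theorem on the bidisc applies — this is where the identity \eqref{eqn:invariantd}, together with the $k$-independence of the algebraic relations in Lemma \ref{lemma:eqrel}, does the work, ensuring that a $\phi$ produced for one admissible kernel automatically does the job for all of them and in particular satisfies $\phi(A_n)=\phi_n(A_n)$. A secondary technical point is checking that $S^k_Z$ is genuinely coinvariant and that $H^k_n = S^k_Z \cap (\rkhs_k\ominus\{f:f(A_m)=0, m\ne n\})$-type separation statements behave well under closed spans; these are routine but need the Bessel/Riesz bound to control infinite sums.
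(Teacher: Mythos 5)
Your forward direction is fine and even slightly more careful than the paper's: the paper's proof of (ii)$\Rightarrow$(iii) of Theorem \ref{theo:diagonal} (which subsumes this statement) implicitly uses that the interpolation constant can be taken uniform in the target, and you correctly supply the closed graph argument that justifies this. Your identification of $\mu$ with $M_\phi^*|_{\overline{\spa}\{H^k_n\}}$ via \eqref{eqn:invariantd} and the appeal to Theorem \ref{theo:nikriesz} (iii)$\Rightarrow$(i) matches the paper.

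The converse direction, however, has a genuine gap at exactly the place you flag as the main obstacle. You reduce the problem to: given a family of contractions $(T^k)_{k\in\A_2}$, each defined block-diagonally on $\overline{\spa}\{H^k_n\}$ by the data $\phi_n(A_n)^*$, find a single $\phi\in\h^\infty_2$ with $M_\phi^*$ restricting to every $T^k$. You then assert that this follows from "the Agler--McCarthy/commutant-lifting machinery of \cite{agler,aglerpp}." But those references treat \emph{scalar} nodes, where the relevant subspaces are one-dimensional spans of kernel vectors and the operator $T^k$ is diagonal with scalar entries. For matrix nodes the subspaces $H^k_n$ are higher-dimensional, encode derivative data, and the prescribed operator is a block map; it is not in the cited literature that Agler's extension argument goes through in this generality. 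This is precisely the content of Theorem \ref{theo:extension} in the paper, and its proof is nontrivial: it requires Parrot's lemma (Lemma \ref{lemma:parrot}) and, crucially, Lemma \ref{lemma:proj}, which shows that compressing away the span of a single kernel vector $k_z$ takes a finite admissible kernel structure on the nodes $A_1,\dots,A_N$ to another finite admissible kernel structure, so that the uniform bound over $\A_2$ can be reused after each one-point extension. Your appeal to "coinvariance of $S^k_Z$" does not substitute for this: coinvariance of a subspace inside one fixed $\rkhs_k$ is not what the Agler argument uses, since there is no Ando-type commutant lifting inside a single Hilbert space in the polydisc setting — the whole point of considering all admissible kernels simultaneously.

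A second, smaller omission is the finite-to-infinite passage. Theorem \ref{theo:extension} (and Agler's original argument) is stated for finitely many nodes; the paper then truncates the target sequence to $A_1,\dots,A_N$, obtains $f_N\in\h^\infty_2$ with $\|f_N\|_\infty\le C$, and extracts a normal-families limit $\phi$ that agrees with every $\phi_n$ on $A_n$. Your proposal works directly with the full infinite span $S^k_Z$ and does not address why the one-point-at-a-time extension (which requires passing through a sequence of uniqueness for $\D^2$) terminates in an actual multiplier. You should make both of these steps explicit; as written they are the substance of the theorem, not routine verifications.
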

 Since the approach we use to study interpolating sequences of matrices is mainly defined by looking at the Euclidean geometry of reproducing kernel Hilbert spaces, and since we do not know whether $\A_d=\bound_d$ for $d\geq3$, we can only give a (stronger) sufficient condition to replace \eqref{eqn:ss} in the case of the bi-disc:
\begin{theo}
\label{theo:bernmat}
Let $A=(A_n)_{n\in\N}$ be a sequence of pairs of commuting matrices with spectra in $\D^2$ such that
\begin{equation}
\label{eqn:suffd}
\prod_{n\in\N}\rho_G(A_n, A_{\N\setminus n})>0.
\end{equation}
Then $A$ is interpolating.
\end{theo}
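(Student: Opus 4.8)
The plan is to follow the scalar polydisc argument of Berndtsson, Chang and Lin behind the implication \textnormal{(a)}$\implies$\textnormal{(b)} of Theorem \ref{theo:poly:interp}, pushing the matrix data through the Cauchy functional calculus \eqref{eqn:cauchymat}. First I would unwind the hypothesis: by \eqref{eqn:dsepseq}, for every $n$ there is $\phi_n\in\h^\infty_2$ with $\phi_n(A_n)=Id$, $\phi_n(A_j)=0$ for $j\ne n$, and $||\phi_n||_\infty\le\rho_G(A_n,A_{\N\setminus n})^{-1}+\varepsilon_n$ with $\varepsilon_n>0$ at our disposal; by \eqref{eqn:suffd} we may then arrange $\prod_n||\phi_n||_\infty<\infty$, so that $||\phi_n||_\infty\to1$ and the sequence is weakly separated with $\inf_{n\ne j}\rho_G(A_n,A_j)\ge\prod_m\rho_G(A_m,A_{\N\setminus m})>0$. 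Fix a bounded target $(\psi_n)_{n\in\N}$ in $\h^\infty_2$; rescaling, assume $\sup_n||\psi_n||_\infty\le1$. The naive candidate $\sum_n\psi_n\phi_n$ fails to converge in $\h^\infty_2$, since its terms have norm tending to $1$, so a concentration device is needed.

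Next I would manufacture, for each $n$, a \emph{jet partition of unity} factor $g_n\in\h^\infty_2$ with (i) $g_n(A_n)=Id$ and (ii) arbitrarily sharp concentration near the joint spectrum of $A_n$. Put $A_n$ in jointly upper triangular form \eqref{eqn:jointr}, with joint spectrum $\{z_n^{(1)},z_n^{(2)},\dots\}\subset\D^2$ and size $s_n$, and set $g_n:=1-(1-h_n)^{s_n}$. Then (i) is \emph{implied} by $h_n$ taking the value $1$ at each $z_n^{(\nu)}$: in that case $Id-h_n(A_n)$ is strictly upper triangular of size $s_n$, hence nilpotent of order $s_n$, so $g_n(A_n)=Id-(Id-h_n(A_n))^{s_n}=Id$. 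Such an $h_n$, concentrated to any prescribed degree, is produced by the Boole-sum recipe $h_n=1-\prod_\nu(1-\eta_{n,\nu})$ from high powers $\eta_{n,\nu}(z)=\big((1-|(z_n^{(\nu)})^{i}|^2)(1-\overline{(z_n^{(\nu)})^{i}}z^{i})^{-1}\big)^{N_n}$ of normalized one-variable Hardy kernels in a coordinate $i=i(n,\nu)$, with the exponents $N_n$ calibrated against $s_n$ and the local density of the nodes.

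Finally I would set $\phi:=\sum_n\psi_n\phi_ng_n$. Since $\phi\mapsto\phi(A_m)$ is an algebra homomorphism continuous for $||\cdot||_\infty$, since $\phi_n(A_m)=0$ for $n\ne m$ and all these matrices commute with $\psi_n(A_m)$, and since $g_m(A_m)=Id$, once the series is shown to converge in $\h^\infty_2$ it follows that $\phi(A_m)=\psi_m(A_m)g_m(A_m)=\psi_m(A_m)$, which by Definition \ref{defi:dintmat} makes $A$ interpolating. The crux, and the step I expect to be the main obstacle, is the convergence estimate $\sup_{z\in\D^2}\sum_n|\phi_n(z)|\,|g_n(z)|<\infty$ -- a Carleson-measure / Schur-test bound for $\h^2_2$ into which the matrix structure enters twice: the ``point masses'' are now clusters of at most $s_n$ nodes carrying jets of order up to $s_n$, and the $g_n$ are not uniformly bounded (their sup norms grow with $s_n$ and $N_n$), unlike the scalar case where the concentrating factors are bounded by $2$. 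One must first extract from \eqref{eqn:suffd} the appropriate \emph{derived} Carleson measure condition for $\h^2_2$ -- for instance through the Bessel half of Theorem \ref{theo:bidiskmat}, or by transcribing the proof of \textnormal{(a)}$\implies$\textnormal{(c)} in \cite{bern} -- and then choose the $N_n$ large enough, depending on $s_n$ and on that measure, to absorb the oversized sup norms of the $g_n$, exactly as the weights are balanced in the scalar argument. The same estimate applied to the ``characteristic'' multipliers $\Phi_\sigma:=\sum_{n\in\sigma}\phi_ng_n$ ($\sigma\subset\N$ finite, so $\Phi_\sigma(A_n)=Id$ for $n\in\sigma$ and $0$ otherwise) bounds $||\Phi_\sigma||_\infty$ independently of $\sigma$; by \eqref{eqn:invariantd} the operator $M_{\Phi_\sigma}^*$ then restricts on $\overline{\spa}\{H^k_n\}$ to the partial projection $P_\sigma$ of part (iv) of Theorem \ref{theo:nikriesz}, with norm $\le||\Phi_\sigma||_\infty$ by Ando's inequality and uniformly over admissible $k$, giving an alternative route to the conclusion via Theorem \ref{theo:bidiskmat}.
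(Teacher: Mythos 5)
The convergence estimate you flag as ``the main obstacle'' is a genuine gap, not a detail, and the paper avoids it entirely by taking a completely different route. There is no construction of an interpolating function in the paper's proof. Instead, Remark \ref{rem:euclidsepd} (a corollary of the extension theorem, Theorem \ref{theo:extension}) identifies $\rho_G(A_n, A_{\N\setminus n})$ with $\inf_{k\in\A_2}\sin\big(H^k_n,\ \underset{j\ne n}{\overline{\spa}}\{H_j^k\}\big)$, turning \eqref{eqn:suffd} into a product-of-sines condition, uniform over admissible kernels. Proposition \ref{prop:ssriesz}, proved via Lemma \ref{lemma:hilbertriesz} (which bounds the norm increase when an operator on a subspace is extended by zero to a larger span, with loss $1/\sin$ of the angle to the new piece), then shows that this product condition forces the partial projections $P_\sigma$ to be uniformly bounded; Theorem \ref{theo:nikriesz} (iv)$\iff$(i) converts this into the Riesz system condition, and Theorem \ref{theo:bidiskmat} closes the loop. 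The whole argument is abstract Hilbert space geometry --- no Blaschke-type products, no concentration factors $g_n$, no Schur test.

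Your Berndtsson--Chang--Lin-flavored plan has two unresolved difficulties exactly at the step where you stop. First, the matrix version of the Carleson-measure estimate is not derived: \eqref{eqn:suffd} yields, through the Bessel half of Theorem \ref{theo:bidiskmat}, an abstract bound for the subspaces $H^k_n$ in \emph{every} admissible RKHS, but the Schur-test computation you want lives in $\h^2_2$ and needs a quantitative density estimate on the joint spectra \emph{together with} their attached jets; translating one into the other is exactly the content you would have to supply. Second, your own observation that $\|g_n\|_\infty$ is not uniformly bounded is fatal as stated: $h_n$ is a Boole sum of $s_n$ terms, $g_n=1-(1-h_n)^{s_n}$ compounds this, and ``choose $N_n$ large enough'' is a placeholder, not an estimate --- increasing $N_n$ improves concentration but also inflates the size of $g_n$, and nothing shows these competing effects balance when $(s_n)$ is unbounded. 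Note finally that \eqref{eqn:suffd} is strictly stronger than the BCL-type hypothesis ($\prod_n$ of distances to the \emph{whole tail} rather than $\inf_n\prod_{j\ne n}$ of pairwise distances), and the paper poses the BCL-analogue as an open question immediately after this theorem; that the author used the stronger hypothesis with a purely Hilbertian argument and still left the weaker question open is strong circumstantial evidence that the constructive route you sketch does not go through easily. Your closing ``alternative route'' (bound $\|\Phi_\sigma\|_\infty$, restrict $M^*_{\Phi_\sigma}$ to get $P_\sigma$, invoke Theorem \ref{theo:nikriesz}(iv)) is close in spirit to the paper's endgame, but it still takes the same unproven sum estimate as its input.
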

Let us observe that \eqref{eqn:suffd} is a rather strong separation condition on $A=(A_n)_{n\in N}$. In order to extend the sufficient condition in Theorem \ref{theo:poly:interp}, one has would have to give a positive answer to the following question: 
\begin{question}
Is $A$ interpolating, provided that
\[
\inf_{n\in\N}\prod_{j\ne n}\rho_G(A_n, A_j)>0?
\]
\end{question}
This remain, for us, open. \\
Section \ref{sec:comex} will show that, given any sequence $(m_n)_{n\in\N}$ in $\N$ and any $d\geq2$, there exists an interpolating sequence $A$ of $d$-tuples of commuting matrices such that each $A_n$ has  $m_n\times m_n$ coordinates.
\subsection{Interpolating $d$-tuples of Non-Commuting Matrices }
In order to study interpolating properties of a sequence of eventually non-commuting $d$-tuples of matrices, one has to change the class of functions to apply to such a sequence. Specifically, the robust and highly active field of {\bf non-commutative (NC) functions} is the environment in which the discussion of Section \ref{sec3} will take place. \\
Fix $d$ in $\N\cup\{\infty\}$. For any positive integer $n$, let $\Mat^1_n$ be the set of all $n\times n$ matrices with coefficient in $\C$ and, more generally, let $\Mat^d_n$ be the set of all $d$-tuples of square matrices of size $n$. If $d=\infty$, we require that the row norm
\begin{equation}
\label{eqn:ncnorm}
||X||:=\left|\left|\sum_{i}X^i(X^i)^*\right|\right|
\end{equation}
is bounded for any $X$ in $\Mat^\infty_n$. Each $\Mat^d_n$ is normed with \eqref{eqn:ncnorm} and endowed with the induced topology. Let us consider now arbitrarily large sizes by defining
\[
\Mat_d:=\bigcup_{n=1}^\infty\Mat^d_n
\] 
as the disjoint union in $n$ of $d$-tuples of $n\times n$ matrices. A topology of interest for us is the so-called \emph{disjoint union (DU) topology}: a subset $\Omega$ of $\Mat_d$ is open if and only if all of its $n$ components
\[
\Omega(n):=\Omega\cap\Mat^d_n\qquad n\in\N
\]
are open in $\Mat^d_n$. Moreover, $\Omega$ is a {\bf non-commutative (NC) set} if it is closed under direct sums:
\[
Z, W\in\Omega\implies Z\oplus W\in\Omega.
\]
A class of functions that can be defined on such an $\Omega$ is $\C\braket{z_1, \dots, z_d}$, the set of all \emph{free polynomials} in $d$ non-commuting variables. More generally, a function $f\colon\Omega\to\Mat_1$ is a {\bf non-commutative (NC) function} if
\begin{itemize}
\item $f$ is \emph{graded}: if $Z$ is in $\Omega(n)$, then $f(Z)$ belongs to $\Mat^1_n$;
\item $f$ \emph{respects direct sums}: for any $Z$ and $W$ in $\Omega$, then $f(Z\oplus W)=f(Z)\oplus f(W)$;
\item $f$ \emph{respects similarities}: for any $Z$ in $\Omega(n)$ and for any invertible $P$ in $\Mat^1_n$, then $f(P^{-1}ZP)=P^{-1}f(Z)P$, provided that $P^{-1}ZP$ belongs to $\Omega$.
\end{itemize}
One can easily check that any NC polynomial is an NC function on $\Omega$. An NC function $f$ is \emph{holomorphic} on $\Omega$ if it is locally bounded. %It turns out \cite{boh} that this is necessary and sufficient for $f$ to be (locally) written as a formal power series
%\[
%f(Z)=\sum_{l\in\W_d}a_l(Z-A)^l\qquad A\in\Omega,
%\]
%where $\W_d$ is the set of free words having $d$ generators.
\\
The domain of interest for us in order to extend interpolation results to this NC setting is the {\bf NC unit ball}
\[
\Ball_d:=\left\{ Z\in\Mat_d\,\big|\,||Z||<1\right\},
\]
and the interpolating functions will be chosen from the algebra of {\bf bounded NC analytic functions}
\[
\rkhs^\infty_d:=\{f\quad\text{NC holomorphic on}\,\Ball_d\quad|\quad ||f||_\infty<\infty\},
\]
where
\[
||f||_\infty:=\sup_{Z\in\Ball_d}||f(Z)||.
\]
Definition \ref{defi:intmat} adapts also to this non-commutative setting: a sequence $(Z_n)_{n\in\N}$ in $\Ball_d$ (of eventually non-constant dimensions) is {\bf interpolating} for $\rkhs^\infty_d$ if for any bounded sequence $(\phi_n)_{n\in\N}$ in $\rkhs^\infty_d$ there exists a function $\phi$ in $\rkhs^\infty_d$ such that 
\[
\phi(Z_n)=\phi_n(Z_n)\qquad n\in\N.
\]
As for the well-known scalar case, the understanding of NC interpolating sequences goes through the study of related Hilbert spaces. Let $\W_d$ be the set of words with $d$ generators, and let
\[
\rkhs^2_d:=\left\{f=\sum_{l\in\W_d}a_lZ^l\quad\bigg|\quad||f||_2^2:=\sum_{l\in\W_d}|a_l|^2<\infty\right\}
\] 
be the so-called \emph{NC Drury Arveson space} of those formal NC power series with square-summable coefficients. It turns out that $\rkhs^2_d$ can be seen as an {\bf NC reproducing kernel Hilbert space}. More specifically, for any $W$ in $\Ball_d(n)$ and for any pairs of vectors $u$ and $v$ in $\C^n$, let
\begin{equation}
\label{eqn:ncrkhs}
K_W(u, v)(Z):=\sum_{l\in\W_d}\braket{v, W^lu}Z^l\qquad Z\in\Ball.
\end{equation}
The function $K_W(u,v)$ is linear in $v$ and conjugate-linear in $u$. Salomon, Shalit and Shamovich showed in \cite[Prop. 3.2]{orr} that $\rkhs^2_d$ is generated by the NC functions in \eqref{eqn:ncrkhs}, and that for any $f$ in $\rkhs_d^2$ and for any $W$ in $\Ball_d(n)$
\begin{equation}
\label{eqn:ncrep}
\braket{f, K_W(u, z)}_{\rkhs^2_d}=\braket{f(W)u, v}_{\C^n}.
\end{equation}
Moreover, the multiplier algebra of $\rkhs_d^2$
\[
\M_{\rkhs^2_d}:=\{\phi\,\,\text{NC holomorphic function on}\,\Ball_d\,|\, \phi f\in\rkhs_d^2, f\in\rkhs_d^2 \}
\]
is isometrically identifiable with $\rkhs^\infty_d$, \cite[Coro. 3.6]{orr}, and the reproducing property in \eqref{eqn:ncrep} implies that 
\[
M_\phi^*(K_W(u, v))=K_W(u, \phi(W)^*v)\qquad \phi\in\rkhs^\infty_d.
\]
Similarly to Section \ref{sec2}, in Section \ref{sec3} we will describe interpolating sequences in terms of \emph{separated sequences}. Moreover, we will see how to separated elements in $\Ball_d$ correspond separated sub-spaces of $\rkhs^2_d$. Specifically, the \emph{NC Gleason distance} between two elements $Z$ and $W$ of $\Ball_d$ is the inverse of the least norm of an $\rkhs^\infty_d$ NC function that vanishes at $W$ and that is the identity at $Z$:
\[
\rho_{NC}(Z, W):=\frac{1}{\inf\{||\phi||_\infty\,|\, \phi(Z)=Id, \phi(W)=0\}}.
\]
As in \eqref{eqn:dsepseq}, we might also want to separate $Z$ from a whole sequence $(W_n)_{n\in\N}$ via
\[
\rho_{NC}(Z, (W_n)_{n\in\N}):=\frac{1}{\inf\{||\phi||_\infty\,|\, \phi(Z)=Id, \phi(W_n)=0, n\in\N\}}.
\]
We will see in Section \ref{sec3}  how an analogue of the Pick property for the NC Drury-Arveson space implies that separation conditions on a sequence $(Z_n)_{n\in\N}$ in $\bound_d$ correspond to separated sequences of sub-spaces of $\rkhs^2_d$ defined as
\begin{equation}
\label{eqn:ncmodel}
\rkhs_n:=\rkhs^2_d\ominus\{f\in\rkhs^2_d\,|\, f(Z_n)=0\}.
\end{equation}
Since the right hand side of the reproducing property \eqref{eqn:ncrep} vanishes for any $u$ and $v$ if and only if $f(W)=0$, we have that, 
\begin{equation}
\label{eqn:nckernel}
\rkhs_n=\{K_{Z_n}(u, v)\, |\, u, v\in\C^{m_n}\},
\end{equation}
where we assume that $Z_n$ belongs to $\Ball_d(m_n)$ for any $n$.\\
Interpolating sequences can be then characterized via separation conditions in $\rkhs^2_d$:
\begin{theo}
\label{theo:ncriesz}
$Z$ is interpolating if and only if the sequence $(\rkhs_n)_{n\in\N}$ is a Riesz system in $\rkhs^2_d$.
\end{theo}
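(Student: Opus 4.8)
\emph{Proof proposal.} The idea is to recast, for a sequence $Z=(Z_n)_{n\in\N}$, the interpolation condition as a statement about a single operator on $S:=\overline{\spa}\{\rkhs_n\,:\,n\in\N\}$ and then feed it into Theorem \ref{theo:nikriesz}; in the nontrivial direction the extra ingredient is that $\rkhs^\infty_d$ is the commutant of the right free shift on $\rkhs^2_d$, together with an isometric noncommutative commutant lifting theorem. First some bookkeeping, used in both directions. Write $M_\psi$ for left multiplication by $\psi\in\rkhs^\infty_d$ and $R_1,\dots,R_d$ for the right free shifts on $\rkhs^2_d$ (right multiplication by the free variables); the $R_i$ commute with every $M_\psi$. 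From \eqref{eqn:ncrep} and the formula $M_\psi^*(K_W(u,v))=K_W(u,\psi(W)^*v)$ one reads off $(\psi f)(W)=\psi(W)f(W)$, so $\mathcal I_n:=\{f\in\rkhs^2_d\,|\,f(Z_n)=0\}$ is invariant under $M_\psi$ for all $\psi$ and under each $R_i$; hence, by \eqref{eqn:nckernel}, $\rkhs_n=\rkhs^2_d\ominus\mathcal I_n$ and therefore also $S$ are invariant under $M_\psi^*$ for all $\psi$ and under each $R_i^*$. In particular $M_\psi^*$ commutes with $R_i^*$ for all $\psi,i$.

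For the forward implication, assume $Z$ is interpolating. A routine open mapping argument (as in \cite{dayoss}) yields a uniform constant $C_0$ such that every target $(\phi_n)_{n\in\N}$ with $\sup_n\|\phi_n\|_\infty\le1$ is interpolated by some $\phi$ with $\|\phi\|_\infty\le C_0$. Fix $(w_n)_{n\in\N}$ in the unit ball of $l^\infty$ and apply this to the constant targets $\phi_n\equiv\overline{w_n}$, obtaining $\phi\in\rkhs^\infty_d$ with $\|\phi\|_\infty\le C_0$ and $\phi(Z_n)=\overline{w_n}\,Id$. Since $M_\phi^*K_{Z_n}(u,v)=K_{Z_n}(u,\phi(Z_n)^*v)=w_n\,K_{Z_n}(u,v)$, formula \eqref{eqn:nckernel} shows that $M_\phi^*|_S$ is exactly the operator equal to $w_n\,Id$ on each $\rkhs_n$, and it is bounded by $\|\phi\|_{\M_{\rkhs^2_d}}=\|\phi\|_\infty\le C_0$. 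As $(w_n)_{n\in\N}$ was arbitrary, condition \textbf{(iii)} of Theorem \ref{theo:nikriesz} holds with $C^2=C_0$, so $(\rkhs_n)_{n\in\N}$ is a Riesz system.

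For the converse, assume $(\rkhs_n)_{n\in\N}$ is a Riesz system with bound $C$ and let $(\phi_n)_{n\in\N}$ be a target, which after scaling we may take with $\sup_n\|\phi_n\|_\infty\le1$. Put $\chi_n:=M_{\phi_n}^*|_{\rkhs_n}$, legitimate by the invariance above, with $\|\chi_n\|\le\|\phi_n\|_\infty\le1$. By Theorem \ref{theo:nikriesz}, \textbf{(i)}$\Rightarrow$\textbf{(ii)}, these glue to a well-defined bounded operator $\chi$ on $S$ with $\chi|_{\rkhs_n}=\chi_n$ and $\|\chi\|\le C$. Since each $M_{\phi_n}^*$ commutes with $R_i^*$ and both preserve $\rkhs_n$, $\chi$ commutes with $R_i^*|_S$ on the span of the $\rkhs_n$, hence on $S$. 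Because $S$ is co-invariant for the right shifts, the noncommutative commutant lifting theorem for $\rkhs^2_d$ (see \cite{orr} and the references therein), applied to $\chi^*$ and the compressed right shifts $P_SR_i|_S$, gives $Y\in\bound(\rkhs^2_d)$ with $YR_i=R_iY$ for all $i$, $P_SY|_S=\chi^*$ and $\|Y\|=\|\chi\|$. The commutant of the right shifts is the algebra of left multipliers of $\rkhs^2_d$, so $Y=M_\phi$ with $\phi\in\rkhs^\infty_d$ and $\|\phi\|_\infty=\|\chi\|\le C$; taking adjoints and using $M_\phi^*S\subseteq S$ gives $M_\phi^*|_S=\chi$, in particular $M_\phi^*|_{\rkhs_n}=\chi_n=M_{\phi_n}^*|_{\rkhs_n}$. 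Pairing $M_\phi^*K_{Z_n}(u,v)=M_{\phi_n}^*K_{Z_n}(u,v)$ against the constant function $1$ and invoking \eqref{eqn:ncrep} yields $\phi(Z_n)=\phi_n(Z_n)$ for every $n$, so $Z$ is interpolating.

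The crux is the last part of the converse: producing the global multiplier $\phi$ from the operator $\chi$ on the co-invariant subspace $S$. This rests on two structural facts about $\rkhs^2_d$ — that $\rkhs^\infty_d$ is exactly the commutant of the right free shift, and that noncommutative commutant lifting holds isometrically — and it uses crucially that $\rkhs_n$ (hence $S$) is co-invariant both for the right shifts \emph{and} for the left multipliers, the two one-sided structures playing complementary roles. This is precisely where the noncommutative problem departs from the commuting polydisc argument behind Theorem \ref{theo:bidiskmat}, in which the interpolant is scalar-valued and its multiplication operator commutes with the coordinate shifts so that ordinary commutant lifting applies directly. Two lesser points also need care: the well-definedness and boundedness of $\chi$, which is exactly what the Riesz hypothesis supplies via Theorem \ref{theo:nikriesz}, and the uniform interpolation constant in the forward direction, a routine consequence of the open mapping theorem.
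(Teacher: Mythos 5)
Your forward direction matches the paper's route ((i)$\Rightarrow$(ii)$\Rightarrow$(iii) in Theorem \ref{theo:ncdiagonal}), with the open mapping argument spelled out explicitly; that step is implicit in the paper but is indeed what licenses the uniform bound needed to apply Theorem \ref{theo:nikriesz}(iii). In the converse you diverge genuinely from the paper. The paper first extends the target data to a single NC function $\phi_0$ on the NC envelope $\overline{Z}^{nc}$, verifies the relevant operator bound via Theorem \ref{theo:nikriesz}, and then appeals to the NC Pick property of the NC Drury--Arveson space as a cited black box (\cite[Th.\ 4.7]{orr}). You instead bypass the construction of $\phi_0$ entirely: you glue the local compressions $M_{\phi_n}^*|_{\rkhs_n}$ into an operator on $S$, observe that it commutes with the compressed right shifts, and invoke noncommutative commutant lifting together with the identification of the commutant of $\{R_i\}$ with $\rkhs^\infty_d$. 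This is in effect an unpacking of the NC Pick property rather than a use of it, and it is correct; the tradeoff is that you rely on a dilation theorem (Frazho/Popescu/Davidson--Pitts) that is not what \cite{orr} proves or cites in that form -- the attribution ``see \cite{orr} and the references therein'' should be replaced by the actual sources for NC commutant lifting -- whereas the paper's argument stays inside the framework set up in \cite{orr}. One further small gain of your route is that it avoids the implicit well-definedness question for $\phi_0$ on $\overline{Z}^{nc}$; what it costs is having to verify, as you do, the co-invariance of $S$ for both the right shifts and the left multipliers and the commutation of the glued operator with $R_i^*|_S$.
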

As a consequence, we will show that a separation condition on the sequence  $Z$ is sufficient for it to be interpolating:
\begin{theo}
\label{theo:ssnc}
If 
\begin{equation}
\label{eqn:ncss}
\prod_{n\in\N}\rho_{NC}(Z_n, (Z_j)_{j\ne n})>0
\end{equation}
then $Z$ is interpolating.
\end{theo}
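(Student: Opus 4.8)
The plan is to combine Theorem~\ref{theo:ncriesz} with criterion (iv) of Theorem~\ref{theo:nikriesz}. By Theorem~\ref{theo:ncriesz} it is enough to prove that $(\rkhs_n)_{n\in\N}$ is a Riesz system in $\rkhs^2_d$, and by Theorem~\ref{theo:nikriesz} this follows as soon as we exhibit a constant $C_0$ with $||P_\sigma||\leq C_0$ for every \emph{finite} $\sigma\subseteq\N$, where $P_\sigma$ is the coordinate projection of Theorem~\ref{theo:nikriesz}(iv) attached to $(\rkhs_n)_{n\in\N}$, defined on $\mathcal{K}:=\overline{\spa}\{\rkhs_n:n\in\N\}$.

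First I would unwind hypothesis~\eqref{eqn:ncss}. Each factor $\rho_{NC}(Z_n,(Z_j)_{j\ne n})$ is positive, so for every $n$ one may choose $\psi_n\in\rkhs^\infty_d$ with $\psi_n(Z_n)=\id$, $\psi_n(Z_j)=0$ for all $j\ne n$, and $||\psi_n||_\infty\leq(1+2^{-n})\,\rho_{NC}(Z_n,(Z_j)_{j\ne n})^{-1}$; since $\prod_n(1+2^{-n})<\infty$ and $\prod_n\rho_{NC}(Z_n,(Z_j)_{j\ne n})>0$, the product $M:=\prod_{n\in\N}||\psi_n||_\infty$ is finite. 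By the relation $M_\phi^*(K_W(u,v))=K_W(u,\phi(W)^*v)$ (a consequence of the reproducing property~\eqref{eqn:ncrep}), each $M_{\psi_n}^*$ sends the subspace $\rkhs_m$ of~\eqref{eqn:nckernel} into itself, acting as $\id$ on $\rkhs_n$ and as $0$ on $\rkhs_m$ for $m\ne n$. Hence $M_{\psi_n}^*$ preserves $\mathcal{K}$ and restricts there to a bounded idempotent $P_n$ onto $\rkhs_n$ (idempotency because $M_{\psi_n}^2=M_{\psi_n^2}$ and $\psi_n^2(Z_m)=\psi_n(Z_m)^2$), with $||P_n||\leq||M_{\psi_n}||=||\psi_n||_\infty$ (the isometric identification $\M_{\rkhs^2_d}=\rkhs^\infty_d$), with $P_nP_m=0$ for $n\ne m$, and with $P_n\notin\{0,\id\}$ since every $\rkhs_m$ is nonzero.

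For finite $\sigma\subseteq\N$, the operator $\sum_{n\in\sigma}P_n$ on $\mathcal{K}$ equals $\id$ on each $\rkhs_j$ with $j\in\sigma$ and $0$ on each $\rkhs_j$ with $j\notin\sigma$, hence it is the projection $P_\sigma$ of Theorem~\ref{theo:nikriesz}(iv). Since $P_nP_m=0$ for distinct $n,m$, every product of two or more distinct $P_n$'s vanishes, and so the expansion of $\prod_{n\in\sigma}(\id-P_n)$ collapses:
\[
\id-P_\sigma=\id-\sum_{n\in\sigma}P_n=\prod_{n\in\sigma}(\id-P_n).
\]
Applying to each $P_n$ the classical identity $||\id-Q||=||Q||$, valid for any bounded idempotent $Q\ne 0,\id$ on a Hilbert space, one gets
\[
||\id-P_\sigma||\leq\prod_{n\in\sigma}||\id-P_n||=\prod_{n\in\sigma}||P_n||\leq\prod_{n\in\N}||\psi_n||_\infty=M,
\]
whence $||P_\sigma||\leq 1+M$ for all finite $\sigma$. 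Taking $C$ with $\frac{1}{6}C^2\geq 1+M$, Theorem~\ref{theo:nikriesz} gives that $(\rkhs_n)_{n\in\N}$ is a Riesz system in $\rkhs^2_d$, and Theorem~\ref{theo:ncriesz} then yields that $Z$ is interpolating.

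The routine points are the verification that $M_{\psi_n}^*$ really preserves $\mathcal{K}$ with the stated block action (so that $\sum_{n\in\sigma}P_n$ is genuinely the abstract $P_\sigma$), together with the elementary bookkeeping needed for $\prod_n||\psi_n||_\infty$ to converge. The one substantive step, and the place where the full strength of~\eqref{eqn:ncss} enters, is upgrading the useless bound $||P_\sigma||\leq\sum_{n\in\sigma}||P_n||$, which blows up with $|\sigma|$, to a bound independent of $\sigma$: this is exactly what the factorization $\id-P_\sigma=\prod_{n\in\sigma}(\id-P_n)$ together with the idempotent norm identity $||\id-P_n||=||P_n||$ provide, and it is convergence of the \emph{infinite} product $\prod_n\rho_{NC}(Z_n,(Z_j)_{j\ne n})$ --- not merely the weaker separation in the open Question --- that makes $\prod_{n\in\sigma}||P_n||$ bounded.
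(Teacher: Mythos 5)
Your proof is correct, and it takes a genuinely different route from the paper's. The paper first uses the NC Pick property to translate the hypothesis into a statement about angles in $\rkhs^2_d$ (namely $\rho_{NC}(Z_n,(Z_j)_{j\ne n})=\sin(\rkhs_n,\overline{\spa}_{j\ne n}\{\rkhs_j\})$), and then invokes Proposition \ref{prop:ssriesz}, which it proves by \emph{iteratively} extending the projection $P_\sigma$ one subspace at a time via Lemma \ref{lemma:hilbertriesz}, each step costing a factor of $1/\sin(\cdot)$. You instead keep the separating multipliers $\psi_n$ explicit, restrict $M_{\psi_n}^*$ to $\mathcal{K}$ to obtain mutually annihilating skew idempotents $P_n$, and replace the iterative extension estimate by the closed-form algebraic factorization $\id-P_\sigma=\prod_{n\in\sigma}(\id-P_n)$ together with the classical identity $||\id-Q||=||Q||$ for nontrivial bounded Hilbert-space idempotents. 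At bottom the same angle-symmetry phenomenon underlies both arguments (the identity $||\id-Q||=||Q||$ is essentially the symmetry of the gap that Lemma \ref{lemma:hilbertriesz} exploits), but your route is more algebraic and avoids the intermediate translation to angles, while the paper's route isolates a purely Hilbert-space proposition (Proposition \ref{prop:ssriesz}) that is reused verbatim for the commuting case (Theorem \ref{theo:bernmat}). Both yield essentially the same bound $||P_\sigma||\lesssim\prod_n\rho_{NC}(Z_n,(Z_j)_{j\ne n})^{-1}$; one small advantage of your formulation is that the product telescoping is immediate, while one small cost is that you import the idempotent norm identity as a black box rather than deriving the needed inequality from scratch as Lemma \ref{lemma:hilbertriesz} does.
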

Condition \eqref{eqn:ncss} is a rather strong separation condition on the sequence $Z$. It would be interesting for us to know whether such condition can be relaxed:
\begin{question}
\label{q:ncbern}
Is any sequence $Z$ in $\Ball_d$ satisfying
\[
\inf_{n\in\N}\prod_{j\ne n}\rho_{NC}(Z_n, Z_j)>0
\]
an interpolating sequence?
\end{question}
A positive answer to Question \ref{q:ncbern} would extend a result from Berndtsson in \cite{bernball} to this non commutative setting.\\
An even more ambitious goal would be to prove an analogue of Carleson interpolation Theorem for the noncommutative setting. In particular, we ask whether strongly separated sequences are interpolating, and whether weakly separated sequences whose sequence $(\rkhs_n)_{n\in\N}$ is a Bessel system are interpolating:
\begin{question}
Is $Z$ interpolating, provided that
\[
\inf_{n\in\N}\rho_{NC}(Z_n, (Z_j)_{j\ne n})>0?
\]
\end{question}
\begin{question}
Is $Z$ interpolating, provided that 
\[
\inf_{n\ne j}\rho_{NC}(Z_n, Z_j)>0
\]
and $(\rkhs_n)_{n\in\N}$ is a Bessel system?
\end{question}
An example of  an interpolating sequence of pairs of non-commuting matrices in $\bound_2$ will be given in Section \ref{sec:ncex}.\\

The author would like to thanks John McCarthy for the many discussions that led to this note.
\section{Commuting Pairs}
\label{sec2}
The main goal of this Section is to prove Theorem \ref{theo:bidiskmat} and Theorem \ref{theo:bernmat}.\\
In order to do so, we will first extend in Section \ref{sec:agler} Agler's argument in \cite{aglerpp} to a commuting matrix nodes interpolation problem.\\
 Section \ref{sec:diagonal} will then prove Theorem \ref{theo:diagonal}, which contains Theorem \ref{theo:bidiskmat} and also shows that Definition \ref{defi:dintmat} can be relaxed to an a priori weaker (yet equivalent) notion of interpolating $d$-tuples of commuting matrices where a target sequence can be identified with a bounded sequence in $\C$. \\
In Section \ref{sec:bernmat} we will then look for a sufficient condition in order for a sequence of $d$-tuples of commuting matrices to be interpolating which is stated in terms of the Gleason distance on the polydisc, and thus prove Theorem \ref{theo:bernmat}.
\subsection{Admissible Kernels}
\label{sec:agler}
The key step for the proof of Theorem \ref{theo:bidiskmat} is to extend Agler's argument in \cite{aglerpp} to the matrix case. Let $A_1, \dots, A_N$ be finitely many $d$-tuples of commuting matrices with spectra in $\D^d$, of eventually different sizes $s_1, \dots, s_N$. Given the formal sets
\[
K_j:=\{K_j(u, v) | u, v \in\C^{s_j} \}\qquad j=1, \dots, N
\]
define an equivalence relation $\sim_j$ on $K_j$ that makes the elements in $K_j$ linear in $v$, conjugate linear in $u$, and that sets $K_j(u_1, v_1)\sim_jK_j(u_2, v_2)$ if and only if 
\[
\braket{A_j^lu_1, v_1}_{\C^{s_j}}=\braket{A_j^ lu_2, v_2}_{\C^{s_j}}\qquad l\in\N^d.
\]
Let
\[
F_j:=K_j/\sim_j\qquad j=1,\dots, N
\]
and let $F$ be the set of formal linear combinations of the elements of $\cup_{j=1}^NF_j$. Since each $F_j$ is a finite dimensional vector space, so is $F$. A \emph{kernel} $\tilde{k}$ on $F$ is a choice of a strictly positive scalar product on $F$. Let $F_{\tilde{k}}$ be the Hilbert space $(F, \tilde{k})$.  In this finite-nodes setting, $\tilde{k}$ is {\bf admissible} if, for any $i=1, \dots, d$, the linear map $T^{\tilde{k}}_i\colon F_{\tilde{k}}\to F_{\tilde{k}}$ defined by
\begin{equation}
\label{eqn:tki}
T^{\tilde{k}}_i([K_j(u, v)]_{\sim_j}):=[K_j(u, (A_j^i)^*v)]_{\sim_j}
\end{equation}
is a contraction. Observe that each $T^{\tilde{k}}_i$ is well defined thanks to Lemma \ref{lemma:eqrel}. In particular, if $k$ is an admissible kernel on $\D^d$, then
\[
\spa\{H^k_{A_j}\,|\, j=1\dots, N\}\subset\rkhs_k
\]
is isometric to some $F_{\tilde{k}}$. In other words, $F_{\tilde{k}}$ works as a \emph{restriction} of an admissible kernel to the finite nodes $A_1, \dots, A_N$. For the sake of notation, $k$ will indicate both a kernel on $\D^d$ and the scalar product on $F$ which restricts $k$ to our finite nodes of $d$-tuples of matrices.\\
 Let $\phi_1,\dots, \phi_N$ be targets in $\h^\infty_2$, and suppose that there exists a contraction $\phi$ in $\h^\infty_2$ such that
\[
\phi(A_j)=\phi_j(A_j)\qquad j=1,\dots, N.
\]
Then, thanks to \eqref{eqn:invariantd}, for any kernel in $k$ in $\A_2$ the map $R^k\colon\spa\{H^k_{A_j}\,|\, j=1, \dots, N\}\to\spa\{H^k_{A_j}\,|\, j=1, \dots, N\}$ given by
\begin{equation}
\label{eqn:rk}
R^k(K_{A_j}(u, v))=K_{A_j}(u, \phi_j(A_j)^*v)\qquad j=1, \dots, N
\end{equation}
is a restriction of $M^*_\phi$, and therefore is a contraction. Conversely,
\begin{theo}
\label{theo:extension}
Let $\phi_1, \dots, \phi_N$ be in $\h^\infty_2$ such that, if $R^k$ is defined as in \eqref{eqn:rk},
\begin{equation}
\label{eqn:unifbound}
\sup_{k\in\A_2}||R^k||\leq 1.
\end{equation}
Then there exists a function $\phi$ in $\h^\infty_2$ such that $||\phi||_\infty\leq1$ and
\[
\phi(A_j)=\phi_j(A_j)\qquad j=1, \dots, N.
\]
\end{theo}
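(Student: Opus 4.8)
The plan is to follow Agler's abstract interpolation argument from \cite{aglerpp}, transported to the present finite-nodes, matrix-valued setting via the vector spaces $F$ and the formalism of restricted admissible kernels developed just above. The core of the argument is a Hahn--Banach / cone separation step. First I would introduce the finite-dimensional real vector space $\mathcal{V}$ of Hermitian kernels on $F$ (equivalently, self-adjoint bilinear forms on the finite-dimensional space $F$), and inside it the two convex cones relevant to the problem: the cone $\mathcal{C}_{\mathrm{adm}}$ generated by kernels $k$ that are \emph{not} required to be strictly positive but satisfy the admissibility constraints, i.e.\ for each $i=1,\dots,d$ the operator $\id - T^{k}_i{}^*T^{k}_i \geq 0$ in the sense of forms (using the defining relation \eqref{eqn:tki}), and the ``obstruction'' cone built from the hypothetical failure of the contractive extension, namely forms of the shape $k \mapsto k(x,x) - k(R^k x, R^k x)$ where $R^k$ is the target map \eqref{eqn:rk}. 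The hypothesis \eqref{eqn:unifbound} says precisely that $\|R^k\|\le 1$ on every admissible $k$, so the point here is to set things up so that a separating functional, if it existed, would produce an admissible kernel on which $R^k$ is expansive, contradicting \eqref{eqn:unifbound}; and the absence of separation yields, by a GNS-type construction, the desired $\phi$.

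Concretely, the steps in order: (1) \textbf{Linear-algebraic reformulation.} Translate ``there exists $\phi\in\h^\infty_2$, $\|\phi\|_\infty\le 1$, with $\phi(A_j)=\phi_j(A_j)$'' into the statement that a certain quadratic form is expressible using Ando's inequality. Since $\A_2=\bound_2$ by Ando's theorem, a contractive multiplier of \emph{every} $\rkhs_k$, $k\in\A_2$, with the prescribed action on the $H^k_{A_j}$ is exactly a function in the unit ball of $\h^\infty_2$ with the prescribed values at the nodes — this is the ``Pick property'' / Agler model argument already invoked in the discussion preceding Theorem \ref{theo:bidisk}. (2) \textbf{Reduce to a commuting-contractions extension.} On the finitely many nodes, build a Hilbert space $\rkhs$ together with commuting contractions $T_1,T_2$ and a contraction $X$ on the span of the relevant subspaces such that an $\h^\infty_2$-functional-calculus realization $X = \psi(T_1,T_2)$, $\psi$ in the ball of $\h^\infty_2$, restricts to the required maps; this is where Ando's inequality \cite{ando} enters, giving $\|\psi(T_1,T_2)\|\le\|\psi\|_\infty$. (3) \textbf{Cone/duality step.} Run the Hahn--Banach separation on the finite-dimensional cone of admissible Hermitian kernels to show that \eqref{eqn:unifbound} forces the existence of such a $\psi$; the uniform bound over all $k\in\A_2$ is exactly what makes the relevant cone closed and the separation argument go through. (4) \textbf{Read off $\phi$.} Extract $\phi=\psi$ and verify $\phi(A_j)=\phi_j(A_j)$ by unwinding \eqref{eqn:invariantd} and \eqref{eqn:repd}, using Lemma \ref{lemma:eqrel} to see that the value $\phi(A_j)$ is determined by the action of $M^*_\phi$ on $H^k_{A_j}$.

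The main obstacle, as I see it, is step (2)--(3): making the passage from ``contractive on each restricted admissible kernel $F_k$'' to ``a single contractive $\h^\infty_2$ multiplier'' rigorous in the matrix-valued setting. In the scalar-nodes case one spans each $H^k_{z_n}$ by a single kernel vector $k_{z_n}$ and the maps $T^k$ are diagonalized; here each $H^k_{A_j}$ is multi-dimensional and carries the finer algebraic data recorded by \eqref{eqn:eqrel}, so one must check that the equivalence relation $\sim_j$ is exactly compatible with the action of $M^*_\phi$ for \emph{every} admissible $\phi$, and that the family of forms $k\mapsto k(\cdot,\cdot)-k(R^k\cdot,R^k\cdot)$ together with the admissibility forms generates a \emph{closed} cone — closedness can fail for cones generated by infinitely many generators, and it is precisely the compactness coming from working over the finite node set $A_1,\dots,A_N$ (hence finitely many kernels up to scaling on a fixed finite-dimensional $F$) that rescues it. Once the cone is closed, the separating-hyperplane argument is routine; and Ando's inequality, valid only for $d=2$, is what confines this theorem (and hence Theorem \ref{theo:bidiskmat}) to pairs.
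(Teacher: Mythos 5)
Your plan takes a genuinely different route from the paper. The paper proves Theorem~\ref{theo:extension} via a one-step extension result (Theorem~\ref{theo:pointext}) powered by Parrot's Lemma (Lemma~\ref{lemma:parrot}): starting from the finite node set, one appends a single kernel vector $k_z$, shows via Lemma~\ref{lemma:proj} that the projected structure is again an admissible kernel structure (so the ``row'' bound in Parrot's Lemma is again controlled by the hypothesis), and reads off from Parrot's dichotomy an optimal value $w$ at $z$ that does not increase $\sup_{k}\|R^k\|$. Iterating this over a sequence of uniqueness produces directly a contraction $R$ on $\h^2(\D^2)$ commuting with each $M^*_{z^i}$, hence $R=M^*_\phi$ with $\|\phi\|_\infty=\|M_\phi\|_{\bound(\h^2(\D^2))}\le 1$. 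No cone separation, no GNS/lurking isometry, and --- importantly --- Ando's inequality is \emph{not} invoked in this direction of the argument. (Ando enters only to establish $\A_2=\bound_2$, which is needed for the \emph{necessity} half of Theorem~\ref{theo:bidisk} / Theorem~\ref{theo:bidiskmat}, not for Theorem~\ref{theo:extension}; indeed Theorem~\ref{theo:pointext} is stated and proved for arbitrary $d$.) Your Hahn--Banach/cone-separation proposal is recognisable as the ``factorization + realization'' route (as in \cite[Ch.~11]{john}). It is a legitimate alternative, and would be interesting to see carried out in the matrix-node setting; the trade-off is that the paper's Parrot route is constructive and sidesteps entirely the closedness-of-cone issue that you correctly identify as delicate.

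Two concrete gaps in your sketch as written. First, the closedness justification is wrong: there are not ``finitely many kernels up to scaling'' on $F$; the admissible kernels form an uncountable convex subset of the finite-dimensional space of Hermitian forms on $F$. Closedness of the relevant cone must be argued differently (finite dimensionality of $\mathcal{V}$ plus a normalisation/compactness argument on generators, as in \cite[Ch.~11]{john}); as stated the claim does not hold up. Second, the role assigned to Ando is off: what your step (2)--(3) actually needs to go from a factorization to a bounded $\h^\infty_2$ function is Ando's \emph{dilation} theorem (or, equivalently, a transfer-function realization on a commuting unitary dilation), not merely the von Neumann-type inequality $\|\psi(T_1,T_2)\|\le\|\psi\|_\infty$. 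Moreover, the natural order of operations is: cone separation yields a factorization, then a lurking-isometry/GNS step yields a pair of commuting contractions together with a colligation, then Ando's dilation yields $\phi\in\h^\infty_2$; your steps (1)--(4) mix these up. Finally, you give no account of the step that the paper isolates in Lemma~\ref{lemma:proj} --- that the compressed structure on the matrix-node subspaces is again admissible --- and some version of that compatibility (between the equivalence classes $\sim_j$ and the lurking isometry) would have to be addressed in the GNS step of your plan as well.
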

Theorem \ref{theo:extension} works as an extension property for all admissible kernels, provided that the necessary bounds are uniform in $\A_2$. \\
The proof of Theorem \ref{theo:extension} has a strong operator theory flavour: the first main tool is the following Lemma, due to Parrot:
\begin{lemma}[Parrot's Lemma]
\label{lemma:parrot}
Let $\rkhs_i$ and $\mathcal{K}_i$, $i=1, 2$, be Hilbert spaces, Let $A\colon\rkhs_1\to\mathcal{K}_1$, $B\colon\rkhs_2\to\mathcal{K}_1$ and $C\colon\rkhs_1\to\mathcal{K}_2$ be linear operators. For any $D\colon\rkhs_2\to\mathcal{K}_2$, let
\[
W_D:=\begin{bmatrix}
A & B\\
C & D
\end{bmatrix}\colon\rkhs_1\oplus\rkhs_2\to\mathcal{K}_1\oplus\mathcal{K}_2.
\]
Then 
\[
\sup_{D\in\mathcal{B}(\rkhs_2, \mathcal{K}_2)}||W_D||=\max\left\{\left|\left|\begin{bmatrix}
A\\
C
\end{bmatrix}\right|\right|_{\mathcal{B}(\rkhs_1, \mathcal{K}_1\oplus\mathcal{K}_2)}\,,\, \left|\left|\begin{bmatrix}
A & B
\end{bmatrix}\right|\right|_{\mathcal{B}(\rkhs_1\oplus\rkhs_2, \mathcal{K}_1)}\right\}.
\]
\end{lemma}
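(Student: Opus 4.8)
The plan is to separate the two inequalities packed into the displayed identity: that the right-hand side is $\le\|W_D\|$ for \emph{every} choice of $D$, and that this lower bound is actually attained by a suitable $D_0$ (so that, read with a minimum over $D$, the two sides agree). The first half is immediate from compression: if $P\colon\mathcal{K}_1\oplus\mathcal{K}_2\to\mathcal{K}_1$ is the orthogonal projection, then $P\,W_D=\begin{bmatrix}A & B\end{bmatrix}$, so $\|W_D\|\ge\left\|\begin{bmatrix}A & B\end{bmatrix}\right\|$; and the restriction of $W_D$ to $\rkhs_1\oplus 0$ is $\begin{bmatrix}A\\ C\end{bmatrix}$, so $\|W_D\|\ge\left\|\begin{bmatrix}A\\ C\end{bmatrix}\right\|$. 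Hence the maximum on the right is $\le\|W_D\|$ for all $D$.

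For the reverse inequality I would, after rescaling, assume the maximum on the right equals $1$, i.e. $\left\|\begin{bmatrix}A & B\end{bmatrix}\right\|\le 1$ and $\left\|\begin{bmatrix}A\\ C\end{bmatrix}\right\|\le 1$, equivalently $BB^*\le I-AA^*$ and $C^*C\le I-A^*A$. Writing $D_A:=(I-A^*A)^{1/2}$ and $D_{A^*}:=(I-AA^*)^{1/2}$ for the defect operators of $A$, Douglas' range-inclusion (factorization) lemma turns these two inequalities into factorizations $B=D_{A^*}\Gamma$ and $C=\Delta D_A$ with contractions $\Gamma\colon\rkhs_2\to\mathcal{K}_1$ and $\Delta\colon\rkhs_1\to\mathcal{K}_2$. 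I would then set
\[
D_0:=-\Delta\,A^*\,\Gamma\colon\rkhs_2\to\mathcal{K}_2 ,
\]
and the claim to verify is $I-W_{D_0}^*W_{D_0}\ge 0$.

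The verification is the technical heart. Expanding $W_{D_0}^*W_{D_0}$ into its $2\times 2$ block form, substituting the factorizations for $B$, $C$, $D_0$, and simplifying with the intertwining identity $A^*D_{A^*}=D_AA^*$ (valid because $A^*$ intertwines $I-AA^*$ with $I-A^*A$, hence their continuous functions, by the functional calculus), the blocks collapse: the $(1,1)$ entry becomes $D_A(I-\Delta^*\Delta)D_A$, the $(2,2)$ entry becomes $(I-\Gamma^*\Gamma)+\Gamma^*A(I-\Delta^*\Delta)A^*\Gamma$, and the off-diagonal entry becomes $-D_A(I-\Delta^*\Delta)A^*\Gamma$. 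Setting $E:=(I-\Delta^*\Delta)^{1/2}\ge 0$, one then recognizes
\[
I-W_{D_0}^*W_{D_0}=\begin{bmatrix}D_AE\\ -\Gamma^*AE\end{bmatrix}\begin{bmatrix}ED_A & -EA^*\Gamma\end{bmatrix}+\begin{bmatrix}0 & 0\\ 0 & I-\Gamma^*\Gamma\end{bmatrix},
\]
which is a sum of an operator of the form $TT^*$ and a positive operator (here $\|\Gamma\|\le 1$ is used). Hence $\|W_{D_0}\|\le 1$, and together with the first paragraph this finishes the proof.

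As for difficulty: the argument is self-contained and uses none of the reproducing-kernel apparatus of the surrounding section, but two points deserve care. First, applying Douglas' lemma correctly, so that $\Gamma$ and $\Delta$ are honest contractions and the factorizations hold exactly, including on the kernels of $D_{A^*}$ and $D_A$. Second, keeping adjoints straight while collapsing the blocks: the one identity that actually does the work is $A^*D_{A^*}=D_AA^*$, without which the off-diagonal block refuses to factor through $D_A$ and the clean $TT^*$ decomposition does not appear. Everything else is bookkeeping.
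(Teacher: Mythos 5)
Your proof is correct. Note first, though, that the paper does not actually prove this lemma: it simply cites Agler--McCarthy, ``Pick Interpolation and Hilbert Function Spaces,'' Lemma~B.1, so there is no internal argument to compare against. Your proof is (modulo presentation) the standard defect-operator proof that appears there, so in effect you have reproduced the intended argument.

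Two small points worth flagging. First, the $\sup$ in the statement as printed must be an $\inf$ (equivalently a $\min$, since you exhibit an extremizing $D_0$); as written, taking $D = nI$ shows the left side is $+\infty$ whenever $\rkhs_2$ and $\mathcal{K}_2$ are nonzero. You quietly and correctly read it as a minimum. Second, you invoke Douglas' lemma to get $B = D_{A^*}\Gamma$ and $C = \Delta D_A$ with $\|\Gamma\|, \|\Delta\| \le 1$: here $\Gamma \colon \rkhs_2 \to \mathcal{K}_1$ and $\Delta \colon \rkhs_1 \to \mathcal{K}_2$ are well defined as contractions because $BB^* \le D_{A^*}^2$ and $C^*C \le D_A^2$ are exactly the operator inequalities equivalent to the two row/column norm bounds, and the (unique minimal-norm) Douglas factors satisfy $\|\Gamma\|^2 = \inf\{\mu : BB^* \le \mu D_{A^*}^2\} \le 1$, likewise for $\Delta$. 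Your block computation, the intertwining identity $A^* D_{A^*} = D_A A^*$, and the final positive decomposition
\[
I - W_{D_0}^*W_{D_0}=\begin{bmatrix}D_AE\\ -\Gamma^*AE\end{bmatrix}\begin{bmatrix}ED_A & -EA^*\Gamma\end{bmatrix}+\begin{bmatrix}0 & 0\\ 0 & I-\Gamma^*\Gamma\end{bmatrix}
\]
with $E = (I-\Delta^*\Delta)^{1/2}$ all check out. The lower-bound half of the argument (compress onto $\mathcal{K}_1$, restrict to $\rkhs_1$) is also correct. No gaps.
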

For a proof, see \cite[Lemma B.1]{john}.\\
The second technical tool can be proved for any $d\geq2$, and it will allow us to go from one admissible kernel to another, making a full use of the uniform bound in \eqref{eqn:unifbound}. Let $k$ be an admissibile kernel on $\D^d$, and let $z$ be a point in $\D^d$. Let 
\[
H_0:\spa\{H^k_{A_j}\,|\, j=1, \dots, N\}\subset\rkhs_k
\]
and $H_1:=\spa\{H_0, k_z\}$. Assume that $k_z$ is not in $H_0$, so that $H_0$ is strictly contained in $H_1$. Define $\mathcal{G}_0:=H_1\ominus\spa\{k_z\}$, and let $L_0\colon H_0\to \mathcal{G}_0$ be the restriction on $H_0$ of the orthogonal projection $P_0$ onto $\mathcal{G}_0$:
\[
L_0:=(P_0)_{| H_0}.
\]
 As we already pointed out, $H_0$ is isometric to a finite admissible kernel structure $F_k$ on $A_1, \dots, A_N$. It turns out that $L(H_0)$ corresponds to a finite admissible kernel structure as well:
\begin{lemma}
\label{lemma:proj}
Define, for any $j=1, \dots, N$ and for any $u$ and $v$ in $\C^{s_j}$
\[
G_j(u, v):=L_0(K_{A_j}(u, v))\in \mathcal{G}_0. 
\]
Then the vector space
\[
\{G_j(u, v)\,|\, j=1, \dots, N, u, v\in\C^{s_j}\}\subset\rkhs_k
\] 
together with the inner product 
\[
g(G_j(u_1, v_1), G_l(u_2, v_2)):=\braket{G_{j}(u_1, v_1), G_{l}(u_2, v_2)}_{\rkhs_k}
\]
is a finite admissible kernel structure on $A_1, \dots, A_N$.
\end{lemma}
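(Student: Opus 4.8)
The plan is to show that $L_0$ restricts to a linear isomorphism of $H_0$ onto $\mathcal{G}_0$ which carries the finite admissible kernel structure already recognized on $H_0$ over to the proposed structure on $\mathcal{G}_0$, the only delicate point being that the transported adjoint coordinate multipliers remain contractive; that fact will come from the observation that the line $\C k_z$ is invariant under every $M_{z^i}^*$, so that compressing to $\mathcal{G}_0=H_1\ominus\C k_z$ is compatible with the coordinate actions.

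First I would check that $L_0=(P_0)_{|H_0}$ is injective: if $h\in H_0$ satisfies $P_0h=0$, then $h$ is orthogonal to $\mathcal{G}_0$ inside $H_1=\mathcal{G}_0\oplus\C k_z$, hence $h\in\C k_z$, and since $k_z\notin H_0$ this forces $h=0$. Since $k_z\notin H_0$ we also have $\dim H_1=\dim H_0+1$, so $\dim\mathcal{G}_0=\dim H_1-1=\dim H_0$ and the injection $L_0$ is onto. Consequently $\mathcal{G}_0=\spa\{G_j(u,v)\,|\,j=1,\dots,N,\ u,v\in\C^{s_j}\}$, and because $G_j=L_0\circ K_{A_j}$, the function $G_j(u,v)$ is linear in $v$, conjugate-linear in $u$, and constant on $\sim_j$-equivalence classes by Lemma \ref{lemma:eqrel}. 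Thus $L_0$ transports the identification of $H_0$ with a finite admissible kernel structure on $A_1,\dots,A_N$ (noted just above the statement) onto $\mathcal{G}_0$ equipped with $g$, and $g$ --- being the restriction of $\langle\cdot,\cdot\rangle_{\rkhs_k}$ to the subspace $\mathcal{G}_0$ --- is automatically a strictly positive scalar product. It therefore remains only to verify admissibility, that is, that the operators $T_i\colon\mathcal{G}_0\to\mathcal{G}_0$ with $T_i(G_j(u,v))=G_j(u,(A_j^i)^*v)$ (the structure maps $T^{\tilde k}_i$ of the definition, transported along $L_0$) are contractions in the norm of $g$.

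To see this I would identify $T_i$ intrinsically. Applying \eqref{eqn:invariantd} to the $i$-th coordinate function gives $M_{z^i}^*K_{A_j}(u,v)=K_{A_j}(u,(A_j^i)^*v)$; since $M_{z^i}^*$ preserves $H_0$, it follows that $T_i(G_j(u,v))=L_0M_{z^i}^*K_{A_j}(u,v)=P_0M_{z^i}^*K_{A_j}(u,v)$. On the other hand $K_{A_j}(u,v)-G_j(u,v)\in H_1\ominus\mathcal{G}_0=\C k_z$ and $M_{z^i}^*k_z=\overline{z^i}\,k_z$, so $M_{z^i}^*(K_{A_j}(u,v)-G_j(u,v))\in\C k_z$ is killed by $P_0$; hence $T_i=P_0\,M_{z^i}^*|_{\mathcal{G}_0}$. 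Since $k$ is admissible, $M_{z^i}$ --- and hence $M_{z^i}^*$ --- is a contraction of $\rkhs_k$, and post-composing with the orthogonal projection $P_0$ cannot increase norms, so $\|T_i x\|_{\rkhs_k}\le\|x\|_{\rkhs_k}$ for every $x\in\mathcal{G}_0$; as the $g$-norm on $\mathcal{G}_0$ is exactly the $\rkhs_k$-norm, each $T_i$ is a $g$-contraction, which proves the lemma. The step I expect to require the most care is the identity $T_i=P_0M_{z^i}^*|_{\mathcal{G}_0}$: one has to see that the transported coordinate operator is genuinely internal to $\mathcal{G}_0$, and this works precisely because $k_z$ is an eigenvector of every $M_{z^i}^*$.
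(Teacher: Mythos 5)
Your argument is correct and is essentially the paper's own: the paper introduces the extension $\tilde{T}^k_i$ of $T^k_i$ to $H_1$ (which on $H_1$ is exactly the restriction of $M_{z^i}^*$, so the two formulations coincide), notes that $\spa\{k_z\}$ is $\tilde{T}^k_i$-invariant so that $P_0\tilde{T}^k_i(Id-P_0)=0$, deduces that the transported structure map is $S_i=P_0\tilde{T}^k_i\big|_{\mathcal{G}_0}$, and concludes contractivity from admissibility of $k$ and the fact that $P_0$ is an orthogonal projection. You reach the same identity $T_i=P_0M_{z^i}^*\big|_{\mathcal{G}_0}$ by the same eigenvector observation, and you also make explicit the (correct, and implicitly used in the paper) point that $L_0$ is invertible when $k_z\notin H_0$.
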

\begin{proof}
Observe that $H_1$ is an admissible kernel structure on the $N+1$ nodes, $A_1, \dots, A_N, z$. Hence the map $\tilde{T}^k_i$ such that 
\[
\tilde{T}^k_i(x)=\begin{cases}
T^k_i(x)\quad\text{if}&\quad x\in H_0\\
\overline{z^i}~k_z\quad\text{if}&\quad x=k_z
\end{cases}\qquad i=1, \dots, d
\]
extends $T^k_i$ defined in \eqref{eqn:tki} to a contraction on $H_1$. Since the maps 
\[
S_i:=P_0(\tilde{T}^k_i)_{|\mathcal{G}_0}\qquad i=1,\dots, d
\]
are contractions, it suffices to show that 
\begin{equation}
\label{eqn:multvarg}
T^g_i=S_i\qquad i=1, \dots, d.
\end{equation}
Since $\spa\{k_z\}$ is invariant under each $\tilde{T}^k_i$, we have
\[
P_0\tilde{T}^k_i(Id-P_0)=0\qquad i=1, \dots, d,
\]
and therefore, for any $f$ in $H_0$
\[
S_iL(f)=P_0\tilde{T}^k_iP_0(f)=P_0\tilde{T}^k_i(f)=P_0T^k_i(f)=LT^k_i(f)\qquad i=1, \dots, d.
\]
Thus $S_i=L T^k_i L^{-1}$ is similar to $T^k_i$ and therefore 
\[
S_i(G_j(u, v))=L(K_j(u, (A^i)^*v))=G_j(u, (A^i)^*v)=T^g_i(G_j(u, v)),
\]
proving \eqref{eqn:multvarg}.
\end{proof}
We are now ready to prove Theorem \ref{theo:extension}. In order to do so, it suffices to show that, for any $z$ in $\D^d$, one can choose an optimal $w$ in $\C$ such that, for any admissible kernel $k$, $\tilde{R}^k\colon H_1\to H_1$ given by 
\begin{equation}
\label{eqn:rktilda}
\tilde{R}^k_w(x):=\begin{cases}
R^k(x)\quad\text{if}&\quad x\in H_0\\
\overline{w}k_z\quad\text{if}&\quad x=k_z
\end{cases}
\end{equation}
extends $R^k$ defined in \eqref{eqn:rk} without changing its norm:
\begin{theo}
\label{theo:pointext}
For any $z$ in $\D^d$, there exists a $w$ in $\C$ such that 
\[
\sup_{k\in\A_d}||\tilde{R}^k_w||=\sup_{k\in\A_d}||R^k||.
\]
\end{theo}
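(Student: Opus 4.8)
The plan is to fix $z\in\D^d$, put $M:=\sup_{k\in\A_d}\|R^k\|$ (if $M=\infty$ there is nothing to prove), and produce a single $w\in\C$ with $\|\tilde R^k_w\|\le M$ for every admissible $k$. Note first that $\sup_k\|\tilde R^k_w\|\ge M$ for \emph{every} $w$, since by \eqref{eqn:rktilda} the map $\tilde R^k_w$ restricts to $R^k$ on $H_0$ and enlarging the codomain from $H_0$ to $H_1$ leaves a norm unchanged; so only the reverse inequality is at stake. Fix an admissible $k$ with $k_z\notin H_0$ (the degenerate case $k_z\in H_0$, in which $\tilde R^k_w$ is already determined on $H_1=H_0$, requires a separate and easier discussion). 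Decompose the domain as $H_1=H_0\oplus(H_1\ominus H_0)$ and the range as $H_1=\mathcal G_0\oplus\C k_z$, where $\mathcal G_0=H_1\ominus\C k_z$ as in Lemma \ref{lemma:proj}, and write $\tilde R^k_w$ as the operator matrix $\left[\begin{smallmatrix}A & B\\ C & D_w\end{smallmatrix}\right]$. Writing $v\in H_1\ominus H_0$ as $v=h+\beta k_z$ with $h\in H_0$, $\beta\in\C$, and using $P_{\mathcal G_0}k_z=0$, one sees that $A=P_{\mathcal G_0}R^k|_{H_0}$, $B(v)=P_{\mathcal G_0}R^k(h)$ and $C=P_{\C k_z}R^k|_{H_0}$ are all independent of $w$, whereas $D_w(v)=P_{\C k_z}R^k(h)+\beta\bar w\,k_z$; since $\dim(H_1\ominus H_0)=1$ and the coefficient $\beta$ of a unit vector is nonzero, $D_w$ ranges over all of $\mathcal B(H_1\ominus H_0,\C k_z)$ as $w$ runs over $\C$.

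The next step is to bound the two sub-blocks that occur in Parrott's Lemma. The block $\left[\begin{smallmatrix}A\\ C\end{smallmatrix}\right]\colon H_0\to\mathcal G_0\oplus\C k_z=H_1$ is just $R^k$ with enlarged codomain, so its norm is $\|R^k\|\le M$. The block $X:=\left[\begin{smallmatrix}A & B\end{smallmatrix}\right]=P_{\mathcal G_0}\tilde R^k_w|_{H_1}$ is $w$-independent; writing $K_{A_j}(u,v)=G_j(u,v)+\beta k_z$ with $G_j(u,v)=L_0(K_{A_j}(u,v))$ as in Lemma \ref{lemma:proj}, and using \eqref{eqn:rk} together with $P_{\mathcal G_0}k_z=0$, one computes $X(G_j(u,v))=L_0\bigl(K_{A_j}(u,\phi_j(A_j)^*v)\bigr)=G_j(u,\phi_j(A_j)^*v)$ and $X(k_z)=0$. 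By Lemma \ref{lemma:proj} the space $\mathcal G_0$ with its inherited inner product $g$ is a finite admissible kernel structure on $A_1,\dots,A_N$, and the first identity says precisely that $X|_{\mathcal G_0}$ is the operator $R^g$ attached to $g$ by the recipe \eqref{eqn:rk} (well defined by Lemma \ref{lemma:eqrel}); hence, with respect to $H_1=\mathcal G_0\oplus\C k_z$, $X=[\,R^g\ \ 0\,]$ and $\|X\|=\|R^g\|\le M$. Parrott's Lemma (Lemma \ref{lemma:parrot}) therefore gives $\min_{w\in\C}\|\tilde R^k_w\|=\max\{\|R^k\|,\|R^g\|\}\le M$. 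Consequently $D_k:=\{w\in\C:\|\tilde R^k_w\|\le M\}$ is nonempty; it is closed and convex, being a sublevel set of the convex function $w\mapsto\|\tilde R^k_w\|$ (a norm of an operator affine in $\bar w$), and it is bounded, since $\tilde R^k_w(k_z)=\bar w\,k_z$ forces $\|\tilde R^k_w\|\ge|w|$, so that $D_k\subseteq\{|w|\le M\}$. Thus each $D_k$ is a nonempty compact convex subset of $\C\cong\R^2$.

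What remains — and this is where the real difficulty lies — is to show $\bigcap_{k\in\A_d}D_k\neq\emptyset$. Since the $D_k$ are compact convex sets in the plane, Helly's theorem reduces this to checking that every finite sub-collection $D_{k_1},\dots,D_{k_m}$ has a common point. For that I would pass to the external direct sum $\rkhs_{k_1}\oplus\dots\oplus\rkhs_{k_m}$, which again carries a $d$-tuple of commuting contractive coordinate multipliers, for which the model spaces, the vectors $K_{A_j}(u,v)$, the identity \eqref{eqn:invariantd} and Lemma \ref{lemma:proj} all have straightforward analogues, and whose associated operator is $\bigoplus_i\tilde R^{k_i}_w$, so that $\{w:\|\bigoplus_i\tilde R^{k_i}_w\|\le M\}=\bigcap_iD_{k_i}$. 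The subtlety is that relative to this direct sum the point $z$ contributes an $m$-dimensional increment $\bigoplus_i\C k_z^{(i)}$ to the model space, so the $D_w$-corner of the Parrott decomposition no longer sweeps out its whole corner space as $w$ varies; to run the argument of the previous two paragraphs one must instead adjoin the $m$ new directions one at a time, propagating the affine dependence on $\bar w$ created at the earlier stages and applying Lemma \ref{lemma:proj} at each stage to keep the two relevant sub-blocks bounded by $M$. Granting that this bookkeeping goes through, each finite intersection $\bigcap_iD_{k_i}$ is nonempty, Helly's theorem produces $w\in\bigcap_{k\in\A_d}D_k$, and for that $w$ one gets $\sup_{k\in\A_d}\|\tilde R^k_w\|\le M=\sup_{k\in\A_d}\|R^k\|$, as desired. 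I expect this last reduction — verifying that the one-at-a-time adjunction of the new directions can be carried out while preserving every norm estimate, so that finitely many of the discs $D_k$ always intersect — to be the main obstacle.
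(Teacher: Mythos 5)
Your first two paragraphs reproduce the paper's argument faithfully and correctly: you set up the same block decomposition of $\tilde R^k_w$ with domain $H_1=H_0\oplus\mathcal F$ and range $H_1=\mathcal G_0\oplus\mathbb{C} k_z$, you verify that the column block is $R^k$ with enlarged codomain, that the row block is independent of $w$ and, via Lemma \ref{lemma:proj}, has norm $\|R^g\|\le M$ for the compressed admissible structure $g$, and that the corner $D$ sweeps out all of $\mathcal{B}(\mathcal F,\mathbb{C} k_z)$ as $w$ ranges over $\mathbb{C}$, so that Parrott's Lemma applies. The paper's written proof stops at exactly this point (``Since $k$ was chosen arbitrarily in $\A_d$, this concludes the proof''), having exhibited, for each individual $k$, \emph{some} $w_k$ with $\|\tilde R^k_{w_k}\|\le M$. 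You are right to press further: the statement of the theorem, and the iteration used to deduce Theorem \ref{theo:extension} from it, demand a \emph{single} $w$ valid for all admissible $k$ simultaneously, while the Parrott-optimal $w$ a priori depends on $k$ through the blocks $A$, $B$, $C$. Your introduction of the compact convex sets $D_k$ (nonempty by Parrott, closed and convex since $w\mapsto\tilde R^k_w$ is $\mathbb{R}$-affine, bounded since $\|\tilde R^k_w\|\ge|w|$) and the reduction via the finite intersection property is exactly the right framework for establishing this uniformity; Helly's theorem is not even needed, since compactness and the finite intersection property already reduce everything to finite subfamilies.

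The gap is that the finite-intersection step $\bigcap_{i=1}^m D_{k_i}\neq\emptyset$ is left unproved. You correctly observe that the naive direct-sum argument breaks down: in $\rkhs_{k_1}\oplus\dots\oplus\rkhs_{k_m}$ the new directions at $z$ form an $m$-dimensional space, while $w\mapsto\overline{w}\,Id$ is only a one-parameter family, so the $D$-corner of the Parrott decomposition no longer sweeps its whole corner space and Parrott's Lemma gives no leverage. (More structurally, the direct sum of scalar admissible structures on $A_1,\dots,A_N,z$ lives on $m$ copies of the formal space $F$, not on $F$ itself, and so is not a finite admissible kernel structure in the paper's sense: the realized node space at the scalar node $z$ in the direct sum is $m$-dimensional, whereas the formal one is one-dimensional.) Your proposed remedy, adjoining the $m$ new directions one at a time, meets exactly the obstacle you flag: after the first adjunction, the operator to be extended already depends affinely on $\overline{w}$, so the two blocks that must be bounded by $M$ in the next application of Parrott are no longer $w$-independent and Lemma \ref{lemma:proj} does not apply verbatim. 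You explicitly leave this bookkeeping undone, so as written you have shown only that each $D_k$ is nonempty, not that they have a common point. It is worth noting that the paper's own proof does not address this step either; resolving it would strengthen both arguments.
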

Theorem \ref{theo:extension} follows then from Theorem \ref{theo:pointext}: let $Z=(z_n)_{n\in\N}$ be a \emph{sequence of uniqueness} for $\D^2$, i. e., a sequence such that any holomorphic function is uniquely determined by its values at $Z$ (for example, any sequence converging to $0$). By iterating  Theorem \ref{theo:pointext} we extend $R^k$ isometrically to the nested sub-spaces
\[
H^k_n:=\spa\{H^k_{n-1}, k_{z_n}\}\qquad n\in\N
\]
by choosing an optimal value $w_i$ at each step, independently of the admissible kernel $k$. This leads to construction of a map on the Hardy space $H^2(\D^2)$
\[
R\colon\h^2(\D^2)\to\h^2(\D^2)
\]
that has all the Szegö kernels as its eigenvetors, and which hence commutes with any $M_{z^i}^*$. Such a map will have then to be the adjoint of multiplication by
\[
\phi:=R^*(1),
\]
and since $M^*_\phi=R$ is a contraction, then $||\phi||_\infty=1$. Moreover, since $M^*_\phi$ conicide with $M^*_{\phi_j}$ on each $H^k_j$, then $\phi$ agrees with $\phi_j$ on each pair $A_j$. 
\begin{proof}[Proof of Theorem \ref{theo:pointext}]
Let $k$ be an admissible kernel on $\D^d$, and let $R^k$ be defined as in \eqref{eqn:rk}. Let $\tilde{R}^k_w$ be the extension in \eqref{eqn:rktilda}. Define
\[
\begin{split}
&\mathcal{G}:=H_1\ominus\spa\{k_z\}\\
&\mathcal{F}:=H_1\ominus H_0.
\end{split}
\]
Split then $\tilde{R}^k_w$ into
\begin{equation}
\label{eqn:split}
\tilde{R}^k_w=\begin{bmatrix}
A & B\\
C & D
\end{bmatrix},
\end{equation}
where
\[
\begin{split}
&A\colon H_0\to\mathcal{G}\\
&B\colon\mathcal{F}\to\mathcal{G}\\
&C\colon H_0\to\spa\{k_z\}\\
&D\colon\mathcal{F}\to\spa\{k_z\}.
\end{split}
\]
The column $\begin{bmatrix}
A\\
C
\end{bmatrix}$ is then $R^k$, while the top row is $\tilde{R}^k_w$ pre-composed with the orthogonal projection $P$ onto $\mathcal{G}$:
\[
\begin{bmatrix}
A & B
\end{bmatrix}=P\tilde{R}^k_w.
\]
In particular, $\begin{bmatrix}
A\\
C
\end{bmatrix}$ does not depend on $w$. Most importantly, $B$ does not depend on $w$ either, since $\tilde{R}^k_w$ has $k_z$ as one of its eigevenctors and $\mathcal{G}$ is orthogonal to $k_z$, which in particular implies
\[
B=(P\tilde{R}^k_w)_{|\mathcal{F}}=(PR^k)_{|\mathcal{F}}.
\]
Therefore \emph{the whole dependence on $w$ in \eqref{eqn:split} is carried by $D$}. In particular, 
\[
D\colon f\in\mathcal{F}\mapsto \frac{\overline{w}||f||^2}{\braket{k_z, f}_{\rkhs_k}} k_z
\]
ranges among any possible complex number, as $w$ ranges in $\C$. Moreover, observe that if 
\[
T^k:=(P\tilde{R}^k_w)_{|\mathcal{G}},
\] 
then $||P\tilde{R}^k_w||\leq||T^k||$ (and hence the two norms are indeed equal), since for any $v=u+\xi$ in $H_1$, where $u$ is in $\mathcal{G}$ and $\xi$ is in $\spa\{k_z\}$, one has
\[
||P\tilde{R}^k_w(v)||=||T^k(u)||\leq||T^k||||u||\leq||T^k||||v||,
\]
thanks to orthogonality. Thus Lemma \ref{lemma:parrot} implies that there exists a choice of $w$ such that
\[
||\tilde{R}^k_w||=\max\{||R^k||, ||T^k|| \}.
\]
To conclude, it suffices to observe that thanks to Lemma \ref{lemma:proj} $\mathcal{G}$ carries an admissible structure $g$, and that therefore
\[
T^k=p(T_1^k,\dots, T_d^k)
\] 
is unitarly equivalent to
\[
R^g=p(T_1^g,\dots, T_d^g),
\]
for any polynomial $p$ such that $p(A_j)=\phi_j(A_j)$ for any  $j=1, \dots, N$. Thus
\[
||\tilde{R}^k_w||=\max\{||R^k||, ||R^g|| \}\leq\sup_{g\in\A_d}||R^g||.
\]
Since $k$ was chosen arbitrarily in $\A_d$, this concludes the proof.
\end{proof}
\begin{rem}
\label{rem:euclidsepd}
As a main consequence of Theorem \ref{theo:extension}, the Gleason distance $\rho_G(M, N)$ between two pairs of commuting matrices corresponds to the sine of the least angle between $H_M^k$ and $H_N^k$, where $k$ ranges among all admissible kernels in $\D^2$. Indeed, if $\phi$ in $\h^\infty_2$ has norm $C$ and separates $M$ and $N$, i.e.
\[
\phi(M)=Id\qquad\phi(N)=0,
\]
then for any admissible kernel $k$ the operator $M^*_\phi$ acts like the identity on $H^k_M$ and like the zero operator on $H^k_N$. Since $||M^*_\phi||=C$, the angle between $H^k_M$ and $H^k_N$ is greater than $1/C$.\\
Conversely, if
\[
\inf_{k\in\A_2}\sin(H^k_M, H^k_N)\geq\frac{1}{C}, 
\]
then Theorem \ref{theo:extension} ($A_1=M, A_2=N, \phi_1=1, \phi_2=0$) implies that there exists a function $\phi$ whose $\h^\infty$ norm does not exceed $C$ that separates $M$ and $N$, hence $\rho_G(M, N)\geq1/C$.
\end{rem}
\subsection{Diagonal Targets}
\label{sec:diagonal}
Theorem \ref{theo:bidiskmat} follows from Theorem \ref{theo:extension}:
\begin{theo}
\label{theo:diagonal}
Let $A=(A_n)_{n\in\N}$ be a sequence of pairs of commuting matrices with spectra in the bi-disc, and let $H^k=(H_n^k)_{n\in\N}$ be, for any kernel in $\A_2$, the associated sequence of closed sub-spaces of $\rkhs_k$ defined in \eqref{eqn:hnd}. The following are equivalent:
\begin{description}
\item[(i)] $A$ is interpolating;
\item[(ii)] For any bounded sequence $(w_n)_{n\in\N}$ in $\C$ there exists a function $\phi$ in $\h^\infty_2$ such that
\[
\phi(A_n)=w_n~Id\qquad n\in\N;
\]
\item[(iii)] There exists a $C>0$ such that, for any $k$ in $\A_2$, $H^k$ is a Riesz system in $\rkhs_k$ with Riesz bound $C$;
\item[(iv)] There exists a sequence $(f_n)_{n\in\N}$ such that $f_n(A_j)=\delta_{n, j}$ and 
\[
\sup_{z\in\D^2}\sum_{n\in\N}|f_n(z)|<\infty.
\]
\end{description}
\end{theo}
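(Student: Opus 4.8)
The plan is to establish the four conditions equivalent through the cycle $(i)\Rightarrow(ii)\Rightarrow(iii)\Rightarrow(i)$, supplemented by the two separate implications $(iv)\Rightarrow(i)$ and $(i)\Rightarrow(iv)$. The implication $(i)\Rightarrow(ii)$ is immediate: a bounded scalar sequence $(w_n)_{n\in\N}$ is interpolated by the constant targets $\phi_n\equiv w_n$, which form a bounded sequence in $\h^\infty_2$, and $\phi(A_n)=\phi_n(A_n)=w_n~Id$.

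For $(ii)\Rightarrow(iii)$ I would first turn $(ii)$ into a \emph{uniform} interpolation constant by a closed graph argument. Let $I:=\{\phi\in\h^\infty_2\,|\,\phi(A_n)=0,\ n\in\N\}$, a closed ideal; since each evaluation $\phi\mapsto\phi(A_n)$ is bounded on $\h^\infty_2$ via \eqref{eqn:cauchymat}, the map $l^\infty\to\h^\infty_2/I$ sending $(w_n)$ to the class of any interpolant is well defined, linear and closed, hence bounded by some $C$. Fixing $(w_n)$ in the unit ball of $l^\infty$ and $k\in\A_2$, choose $\phi$ with $\|\phi\|_\infty\le C$ and $\phi(A_n)=w_n~Id$; by \eqref{eqn:invariantd} the operator $M_\phi^*$ leaves $\overline{\spa}\{H_n^k\}$ invariant and acts as $\overline{w_n}~Id$ on each $H_n^k$, while $\|M_\phi^*\|=\|\phi\|_{\M_k}=\|\phi\|_\infty\le C$ because $\A_2=\bound_2$. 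Thus the operator $\mu$ of Theorem \ref{theo:nikriesz}(iii) has norm at most $C$ uniformly in $k$, and Theorem \ref{theo:nikriesz} gives $(iii)$ with Riesz bound $\sqrt{C}$.

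For $(iii)\Rightarrow(i)$ I would invoke the extension machinery of Section \ref{sec:agler}. Fix bounded targets, rescaled so $\|\phi_n\|_\infty\le1$, let $C$ be the Riesz bound from $(iii)$, and fix $N$. For each $k\in\A_2$ the map $R^k$ of \eqref{eqn:rk} restricts on $H_n^k$ to $M_{\phi_n}^*|_{H_n^k}$, a contraction, so Theorem \ref{theo:nikriesz}(ii) gives $\|R^k\|\le C$ uniformly in $k$; replacing the targets by $\phi_n/C$ makes $\sup_{k\in\A_2}\|R^k\|\le1$, and Theorem \ref{theo:extension} produces $\psi_N\in\h^\infty_2$ with $\|\psi_N\|_\infty\le1$ and $\psi_N(A_j)=\phi_j(A_j)/C$ for $j\le N$. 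Then $\phi_N:=C\,\psi_N$ interpolates the first $N$ nodes with norm $\le C$; letting $N\to\infty$, extracting a weak-$*$ cluster point $\phi$ of $(\phi_N)$ in the weak-$*$ compact ball of $\h^\infty_2$, and using that each evaluation at $A_j$ is weak-$*$ continuous, one obtains $\phi(A_j)=\phi_j(A_j)$ for all $j$. The implication $(iv)\Rightarrow(i)$ is elementary: given $(f_n)$ with $f_n(A_j)=\delta_{n,j}~Id$ and $M:=\sup_{z\in\D^2}\sum_n|f_n(z)|<\infty$ and bounded targets $(\phi_n)$, the series $\sum_n\phi_n f_n$ is pointwise absolutely convergent with uniformly bounded holomorphic partial sums, so by Vitali it converges locally uniformly to some $\phi\in\h^\infty_2$ with $\|\phi\|_\infty\le M\sup_n\|\phi_n\|_\infty$, and since $f_n(A_j)$ is scalar and all terms but $n=j$ vanish at $A_j$, we get $\phi(A_j)=\phi_j(A_j)$.

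The remaining implication, $(i)\Rightarrow(iv)$ (equivalently $(iii)\Rightarrow(iv)$), is the crux and I expect it to be the main obstacle: it amounts to producing a \emph{bounded linear} interpolation operator $l^\infty\to\h^\infty_2$, and a surjection onto $l^\infty$ need not admit a bounded linear section in general, so one must use the full strength of $(iii)$ over \emph{all} admissible kernels. The plan is: from $(iii)$ and Theorem \ref{theo:nikriesz}(iv), for every finite $\sigma\subseteq\N$ the quasi-projection onto $\sum_{n\in\sigma}H_n^k$ has norm at most $\tfrac16 C^2$ uniformly in $k\in\A_2$; feeding this into Theorem \ref{theo:extension} applied to finite truncations and taking a weak-$*$ limit (as in $(iii)\Rightarrow(i)$) produces, for each finite $\sigma$, a function $g_\sigma\in\h^\infty_2$ with $\|g_\sigma\|_\infty\le\tfrac16 C^2$ and $g_\sigma(A_n)=\mathbf 1_\sigma(n)~Id$ for all $n$. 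The delicate part is to extract from this family a single coherent sequence $(f_n)$ with $f_n(A_j)=\delta_{n,j}~Id$ and $\sup_{z\in\D^2}\sum_n|f_n(z)|<\infty$: a randomisation over sign patterns only yields an $O(\sqrt N)$ bound on the truncated sums, so the genuine work is to run the Parrott iteration of Theorem \ref{theo:pointext} in a linearity-preserving way, exploiting that the operator $\mu_{(w_n)}$ of Theorem \ref{theo:nikriesz}(iii) depends linearly on $(w_n)$, so that the resulting extension assignment $(w_n)\mapsto\psi_{(w_n)}$ is linear; a normal families limit of the finite-truncation data then closes the argument.
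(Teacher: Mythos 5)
Your treatment of the cycle $(i)\Rightarrow(ii)\Rightarrow(iii)\Rightarrow(i)$ matches the paper's proof exactly, and you supply details the paper leaves implicit: the closed-graph argument that upgrades $(ii)$ to a uniform constant, the observation that $\A_2=\bound_2$ lets you identify the multiplier norm with the $\h^\infty_2$ norm, and the weak-$*$ (normal families) limit in $(iii)\Rightarrow(i)$. Your direct proof of $(iv)\Rightarrow(i)$ is also correct and slightly cleaner than strictly needed; it uses the multiplicativity of the holomorphic functional calculus $(\phi_n f_n)(A_j)=\phi_n(A_j)f_n(A_j)$ for commuting tuples, which is valid.

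The genuine gap is $(i)\Rightarrow(iv)$, and you have correctly identified it as the crux, but the sketch you offer does not close it. Producing, for each finite $\sigma$, a function $g_\sigma$ with $g_\sigma(A_n)=\mathbf 1_\sigma(n)\,Id$ and $\|g_\sigma\|_\infty$ uniformly bounded gives no control whatsoever on $\sum_n|g_{\{n\}}(z)|$; and the idea of running the Parrott iteration ``in a linearity-preserving way'' fails at the first step: the optimal completion $D$ in Lemma~\ref{lemma:parrot} depends on $A,B,C$ through operator square roots and is not a linear function of the interpolation data, so the assignment $(w_n)\mapsto\psi_{(w_n)}$ obtained by iterating Theorem~\ref{theo:pointext} is not linear and the $O(\sqrt N)$ obstruction from sign-averaging is not removed. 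The paper itself does not prove $(ii)\Leftrightarrow(iv)$ inside this note: it cites Theorem~4.1 of \cite{dayoss} and remarks that the argument there, which uses Montel's theorem, transfers verbatim to $\h^\infty_2$. To make your proof complete you would either need to import that argument (which does not go through a linear Parrott extension) or find a genuinely new route; as written, $(i)\Rightarrow(iv)$ is not established.
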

The proof of the equivalence between (ii) and (iv) follows the same outline of \cite[Th. 4.1]{dayoss}: such an argument is indeed valid since Montel's Theorem extends to $\h^\infty_2$. Therefore, it suffices to show (i)$\implies$(ii)$\implies$(iii)$\implies$(i). Moreover, the equivalence between (i) and (ii) says that, as for the one variable case, the notion of interpolating sequences for pairs of commuting matrices is indeed equivalent to one that a priori is weaker, asking to find interpolating functions for only diagonal targets. 
\begin{proof}
Trivially, (i) implies (ii), by considering a constant target sequence 
\[
\phi_n(z)=w_n\qquad z\in\D^2.
\]
The implication (ii)$\implies$(iii) follows from Theorem \ref{theo:nikriesz}, by observing that, thanks to (ii), for any admissible kernel $k$ on $\D^2$
\[
\mu^k=M^*_\phi\colon\underset{n\in\N}{\spa}\{H_n^k\}\to\underset{n\in\N}{\spa}\{H^k_n\} 
\]
whenever
\[
(\mu^k)_{H^k_n}=w_n~Id_{|H^k_n}.
\]
Finally, (iii)$\implies$(i) follows from Theorem \ref{theo:extension}, Theorem \ref{theo:nikriesz} and a normal family argument. Indeed, let $(\phi_n)_{n\in\N}$ be a bounded sequence in $\h^\infty_2$, and fix a positive integer $N$. By $H^k$ having a Riesz bound $C$ for any admissible $k$, one has that the operator $R^k$ defined in \eqref{eqn:rk} is bounded by $C$ for any $k$ in $\A_2$, thanks to Theorem \ref{theo:nikriesz}. Thus by Theorem \ref{theo:extension} there exists a function $f_N$ whose $\h^\infty_2$ norm doesn't exceed $C$ and 
\[
f_N(A_1)=\phi_1(A_1),\dots, f_N(A_N)=\phi_N(A_N).
\]
Since the sequence $(f_N)_{N\in\N}$ is bounded in $\h^\infty_2$, it has a sub-sequence that converges in $\h^\infty_2$ to $\phi$, which agrees with $\phi_n$ at $A_n$, for any $n$ in $\N$.
\end{proof}
\subsection{A More Explicit Sufficient Condition}
\label{sec:bernmat}
In this Section we will prove Theorem \ref{theo:bernmat}. Thanks to Remark \ref{rem:euclidsepd}, condition \eqref{eqn:suffd} implies that  
\begin{equation}
\label{eqn:suffdh}
\prod_{n\in\N}\sin\left(H^k_n, \underset{j\ne n}{\overline{\spa}}\{H_j^k\}\right)
\end{equation}
is uniformly bounded below for any $k$ in $A_2$. Thanks to Theorem \ref{theo:bidiskmat}, it suffices then to show that any sequence $H=(H_n)_{n\in\N}$ of closed subspaces of a Hilbert space $\rkhs$ satisfying \eqref{eqn:suffdh} is a Riesz system. In order to do so, we will use Theorem \ref{theo:nikriesz}, (i)$\iff$(iv). Fix a finite subset $\sigma$ of $\N$, and enumerate the elements in $\sigma^c$ by $\{j_1, j_2, \dots\}$. For the sake of brevity, let $H_\sigma:=\underset{j\in\sigma}{\overline{\spa}}\{H_j\}$ and
\[
S_i:=\spa\{H_\sigma, H_{j_1}, \dots, H_{j_i}\}\qquad i\in\N.
\]
Namely, if $S_0=H_\sigma$, then each $S_i$ is obtained by adding a subspace not labeled in $\sigma$ to the linear span of $S_{i-1}$. Let $P_0$ be the identity on $H_\sigma$, and define $P_i\colon S_i\to S_i$ by
\[
P_i(x):=\begin{cases}
P_{i-1}(x)\quad&\text{if}\quad x\in S_{i-1}\\
0\quad&\text{if}\quad x\in H_{j_i}
\end{cases}.
\]
Thanks to Theorem \ref{theo:nikriesz}, (i)$\iff$(iv), we need to prove the following
\begin{prop}
\label{prop:ssriesz}
Let $(H_n)_{n\in\N}$ be a sequence of closed sub-spaces of a Hilbert space $\rkhs$ such that
\begin{equation}
\label{eqn:sshs}
\prod_{n\in\N}\sin(H_ n, H_{\N\setminus\{n\}})>0
\end{equation}
Then 
\begin{equation}
\label{eqn:supnorms}
\sup_{\sigma\,\text{finite}}\lim_{i\to\infty}||P_i||<\infty.
\end{equation}
\end{prop}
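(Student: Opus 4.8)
The plan is to bound $\|P_i\|$, for every finite $\sigma$ and every $i$, by a quantity that telescopes into the reciprocal of the product appearing in \eqref{eqn:sshs}. Set $\delta_n:=\sin(H_n,H_{\N\setminus\{n\}})$, so that \eqref{eqn:sshs} reads $\prod_{n\in\N}\delta_n>0$; in particular each $\delta_n\in(0,1]$ and $\prod_{n\in\N}\delta_n^{-1}=\bigl(\prod_{n\in\N}\delta_n\bigr)^{-1}<\infty$. I will use two standard properties of the sine of the minimal angle between two closed subspaces $M,N$ of a Hilbert space. First, it is symmetric: $\sin(M,N)=\sin(N,M)$, both being equal to $\sqrt{1-\|P_MP_N\|^2}$, where $P_M,P_N$ are the orthogonal projections. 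Second, written as $\sin(M,N)=\inf\{\dist(x,N)\,:\,x\in M,\ \|x\|=1\}$, it is non-increasing in its second argument, so $\sin(M,N_1)\geq\sin(M,N_2)$ whenever $N_1\subseteq N_2$.

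Fix a finite $\sigma\subseteq\N$ and keep the notation of the statement: $\{j_1,j_2,\dots\}$ enumerates $\sigma^c$, $S_0=H_\sigma$, $S_i=\spa\{S_{i-1},H_{j_i}\}$, $P_0=\id_{H_\sigma}$, and $P_i|_{S_{i-1}}=P_{i-1}$, $P_i|_{H_{j_i}}=0$; every $S_i$ is finite dimensional, being a finite sum of finite dimensional subspaces. The first step is the estimate
\[
\sin(H_{j_i},S_{i-1})\ \geq\ \delta_{j_i}\qquad(i\geq1).
\]
Indeed $S_{i-1}=\overline{\spa}\{H_n\,:\,n\in\sigma\cup\{j_1,\dots,j_{i-1}\}\}$, and all of these indices are different from $j_i$, so $S_{i-1}\subseteq H_{\N\setminus\{j_i\}}$; the estimate then follows from monotonicity of $\sin$ in its second argument. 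In particular $\sin(H_{j_i},S_{i-1})>0$, hence $S_{i-1}\cap H_{j_i}=\{0\}$ and $S_i$ is the direct sum of $S_{i-1}$ and $H_{j_i}$ (algebraically, and topologically since $S_i$ is finite dimensional); let $Q_i\colon S_i\to S_i$ be the associated skew projection onto $S_{i-1}$ along $H_{j_i}$, that is, $Q_i|_{S_{i-1}}=\id$ and $Q_i|_{H_{j_i}}=0$. For $x=y+z$ with $y\in S_{i-1}$ and $z\in H_{j_i}$, using $-z\in H_{j_i}$ and $y/\|y\|\in S_{i-1}$ (when $y\neq0$) one gets
\[
\|x\|\ =\ \|y-(-z)\|\ \geq\ \dist(y,H_{j_i})\ \geq\ \|y\|\,\sin(S_{i-1},H_{j_i})\ =\ \|y\|\,\sin(H_{j_i},S_{i-1}),
\]
whence $\|Q_ix\|=\|y\|\leq\|x\|\,\sin(H_{j_i},S_{i-1})^{-1}$, and therefore $\|Q_i\|\leq\sin(H_{j_i},S_{i-1})^{-1}\leq\delta_{j_i}^{-1}$.

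Next, $P_i=P_{i-1}\circ Q_i$ on $S_i$: both maps are linear on $S_i$, and they agree on $S_{i-1}$ — where $Q_i$ is the identity and $P_i=P_{i-1}$ — and on $H_{j_i}$ — where both vanish — while $S_{i-1}$ together with $H_{j_i}$ spans $S_i$. Hence $\|P_i\|\leq\|P_{i-1}\|\,\|Q_i\|\leq\|P_{i-1}\|\,\delta_{j_i}^{-1}$, and since $\|P_0\|\leq1$, induction gives
\[
\|P_i\|\ \leq\ \prod_{l=1}^{i}\delta_{j_l}^{-1}\ \leq\ \prod_{n\in\N}\delta_n^{-1}\ <\ \infty ,
\]
the middle inequality because $\{j_l\}_{l}\subseteq\N$ and each $\delta_n^{-1}\geq1$, and the last inequality by \eqref{eqn:sshs}. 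Finally, the recursive definition gives $P_{i+1}|_{S_i}=P_i$, so $\|P_i\|\leq\|P_{i+1}\|$; thus $(\|P_i\|)_i$ is non-decreasing and bounded, $\lim_{i\to\infty}\|P_i\|$ exists, and
\[
\lim_{i\to\infty}\|P_i\|\ \leq\ \prod_{n\in\N}\delta_n^{-1}.
\]
Since the right-hand side does not depend on $\sigma$, \eqref{eqn:supnorms} follows; combined with Theorem~\ref{theo:nikriesz}, (i)$\iff$(iv), and Theorem~\ref{theo:bidiskmat}, this proves Theorem~\ref{theo:bernmat}.

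The argument is essentially the one-line telescoping estimate $\|P_i\|\leq\prod_{l\leq i}\delta_{j_l}^{-1}$, so there is no real obstacle. The only point requiring (mild) care is the norm bound on the skew projection $Q_i$: one must use that $S_{i-1}$ is built only from the subspaces $H_n$ with $n\neq j_i$, so that $\sin(H_{j_i},S_{i-1})$ is bounded below by the single constant $\delta_{j_i}$ from \eqref{eqn:sshs} rather than by some a priori uncontrolled angle between $H_{j_i}$ and an arbitrary subspace, together with the symmetry of $\sin$ so that this lower bound is exactly the quantity entering the hypothesis.
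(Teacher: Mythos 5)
Your proof is correct and follows essentially the same route as the paper: iterate over $\sigma^c$, bound the norm of the extension-by-zero at each step by the reciprocal of $\sin(H_{j_i},S_{i-1})$, and use monotonicity of the angle in the second argument to reduce to the product in \eqref{eqn:sshs}. Your step estimate, obtained by factoring $P_i=P_{i-1}\circ Q_i$ through the skew projection $Q_i$ and bounding $\|Q_i\|\leq\sin(H_{j_i},S_{i-1})^{-1}$, is a cleaner packaging of the paper's Lemma~\ref{lemma:hilbertriesz}, which proves the same bound by a direct orthogonal decomposition; the two arguments are equivalent.
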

This will be done by analyzing how much does the operator norm increase when extending $P_i$ to $P_{i+1}$:
\begin{lemma}
\label{lemma:hilbertriesz}
Let $H$ and $F$ be two closed subspaces of a Hilbert space $\mathcal{H}$ that intersect trivially. Let $d$ be the distance (equivalently, the sine of the angle) between $H$ and $K$, and let $T$ be any bounded operator from $H$ to itself. Then the operator $\tilde{T}\colon \spa\{ H, F\}\to\spa\{ H, F\}$ such that
\[
\tilde{T}(x):=\begin{cases}
T(x)\quad\text{if}&\quad x\in H\\
0\quad\text{if}&\quad x\in F
\end{cases}
\]
extends $T$ and has norm
\[
||\tilde{T}||\leq\frac{1}{d}||T||.
\] 
\end{lemma}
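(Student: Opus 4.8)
The plan is to work with the decomposition of $\spa\{H,F\}$ into the (non-orthogonal) algebraic direct sum $H \dotplus F$, which is legitimate because $H \cap F = \{0\}$, and to estimate the norm of $\tilde T = T \oplus 0$ acting on this sum by comparing it with the orthogonal projection onto $H$ along $F$. Concretely, let $Q$ denote the (bounded, since both spaces are closed and intersect trivially on a finite-dimensional-relevant piece — in our application everything is finite-dimensional anyway) idempotent with range $H$ and kernel $F$. Then for any $x = h + f$ with $h \in H$, $f \in F$ we have $Qx = h$ and $\tilde T x = T h = T Q x$, so $\tilde T = (T) \circ Q$ on $\spa\{H,F\}$ and hence $\|\tilde T\| \le \|T\|_{\mathcal B(H)} \, \|Q\|$. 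Thus the whole lemma reduces to the classical fact that the norm of the oblique projection onto $H$ along $F$ equals $1/\sin\theta(H,F) = 1/d$, where $\theta$ is the minimal angle between the two subspaces.

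First I would recall (or reprove in two lines) the identity $\|Q\| = 1/d$. One direction: for $x = h+f$ one has, writing $P$ for the \emph{orthogonal} projection onto $F^{\perp}$, that $Ph = P x$ (since $Pf = 0$), and $\|Ph\| \ge d\,\|h\|$ by definition of the minimal angle (the sine of the angle between $H$ and $F$ is exactly $\inf_{h \in H, \|h\|=1} \dist(h, F) = \inf_{h\in H,\|h\|=1}\|Ph\|$). Hence $\|Qx\| = \|h\| \le d^{-1}\|Ph\| = d^{-1}\|Px\| \le d^{-1}\|x\|$, giving $\|Q\| \le 1/d$. (The reverse inequality $\|Q\|\ge 1/d$ is not needed for the lemma, so I would omit it, though it is where the optimality comes from.) Combining with the previous paragraph yields $\|\tilde T\| \le \|Q\|\,\|T\| \le \frac{1}{d}\|T\|$, which is the claim.

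The only genuinely delicate point is making sure $\tilde T$ is \emph{well defined}, i.e.\ that the prescription ``$T$ on $H$, $0$ on $F$'' extends consistently to all of $\spa\{H,F\}$; this is exactly where $H \cap F = \{0\}$ is used, since it guarantees the decomposition $x = h+f$ is unique, so $\tilde T x := T h$ is unambiguous and linear. After that, everything is the soft estimate above. I expect the main obstacle — really the only thing to be careful about — is stating the angle/sine identity $\dist(h,F) = \|P_{F^\perp} h\|$ and the definition of $d$ consistently with how ``sine of the angle'' was used earlier in the paper (in Remark \ref{rem:euclidsepd} and Proposition \ref{prop:ssriesz}); once that convention is pinned down, the proof is three or four lines. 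Iterating this lemma along the chain $S_0 \subset S_1 \subset \cdots$ then immediately gives $\|P_i\| \le \prod_{m=1}^{i} \sin(H_{j_m}, H_{\N\setminus\{j_m\}})^{-1} \le \prod_{n\in\N}\sin(H_n,H_{\N\setminus\{n\}})^{-1}$, uniformly in $\sigma$ and $i$, which is precisely \eqref{eqn:supnorms} in Proposition \ref{prop:ssriesz}.
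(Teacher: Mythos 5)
Your proof is correct, and it takes a genuinely cleaner route than the paper's. The paper argues directly: it picks a unit vector $x\in\spa\{H,F\}$, splits it orthogonally as $x=\alpha y+\beta z$ with $z\in H$ and $y\in\spa\{H,F\}\ominus H$, further decomposes $y=h+f$ ($h\in H$, $f\in F$), bounds $\|h\|\le\sqrt{1-d^2}/d$ from the angle hypothesis, and then optimizes $\|T(\alpha h+\beta z)\|$ over $|\alpha|^2+|\beta|^2=1$ to get the factor $\sqrt{1+r^2}=1/d$. You instead observe that $\tilde T=T\circ Q$ where $Q$ is the oblique (idempotent) projection onto $H$ along $F$, and reduce everything to the classical identity $\|Q\|\le 1/d$, proved in two lines via $\|Qx\|=\|h\|\le d^{-1}\|P_{F^\perp}h\|=d^{-1}\|P_{F^\perp}x\|\le d^{-1}\|x\|$. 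This is conceptually tidier: the bound is visibly the norm of an oblique projection, which also makes the sharpness transparent (the reverse inequality $\|Q\|\ge 1/d$, which you rightly note is not needed). One small remark: your aside that $Q$ is bounded ``since everything is finite-dimensional in the application'' is unnecessary and slightly misleading as stated; the estimate $\|Qx\|\le d^{-1}\|x\|$ you derive already establishes boundedness whenever $d>0$, and for $d=0$ the lemma's conclusion is vacuous, so no separate boundedness argument is needed. With that tidied, your proof is complete and the subsequent iteration along $S_0\subset S_1\subset\cdots$ to obtain Proposition \ref{prop:ssriesz} is exactly as in the paper.
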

\begin{proof}
Let $G:=\spa\{H, F\}\ominus H$ be the orthogonal complement of $H$ in $\spa\{H, F\}$, and fix a unit vector $x$ in $\spa\{H, F\}$. Then $x$ can be written uniquely as 
\[
x=\alpha y+\beta z\qquad y\in G\quad z\in H,
\]
where $y$ and $z$ are unit vectors and $|\alpha|^2+|\beta|^2=1$, thanks to orthogonality. Moreover, $y$ can be written as 
\[
y=h+f\qquad h\in H\quad f\in F,
\]
where
\[
||h||^2=\frac{1-\sin^2(f, H)}{\sin^2(f, H)}\leq\frac{1-d^2}{d^2}
\]
and $\tilde{T}(y)=\tilde{T}(h)=T(h)$. Therefore, if $r:=\sqrt{1-d^2}/d$, we have
\[
\begin{split}
||\tilde{T}(x)||^2=&||T(\alpha h+\beta z)||^2\\
\leq&||T||^2~||\alpha h+\beta z||^2\\
=&||T||^2~(\alpha^2 r^2+\beta^2+2\alpha\overline{\beta}\mathrm{Re}\braket{h, z})\\
\leq&||T||^2~(\alpha^2 r^2+\beta^2+2|\alpha||\beta|r)\\
=&||T||^2(r|\alpha|+|\beta|)^2.
\end{split}
\]
Thus
\[
||\tilde{T}||\leq||T||~\sup_{|\alpha|^2+|\beta|^2=1}(r|\alpha|+|\beta|)=||T||~\sqrt{1+r^2}=\frac{||T||}{d}.
\]
\end{proof}
In particular,
\[
||P_n||^2\leq\prod_{i=1}^n\frac{1}{\sin^2(H_{j_i}, S_i)}\leq\prod_{i=1}^n\frac{1}{\sin^2(H_ i, H_{\N\setminus\{i\}})}
\]
is uniformly bounded in $n$ (and $\sigma$) if 
\[
\prod_{i=1}^n \sin(H_ i, H_{\N\setminus\{i\}})
\]
is bounded below, and Proposition \ref{prop:ssriesz} follows. This concludes the proof of Theorem \ref{theo:bernmat}.
\subsection{An Example via Random Interpolating Sequences in the Polydisc}
\label{sec:comex}
In certain instances, considering a random sequence (of scalars) in the polydisc can help to understand the conditions in Theorem \ref{theo:poly:interp} and their relation. This is the motivation that led the authors in \cite{randomdayo} to study \emph{random interpolating sequences}. A random sequence in $\D^d$ is a sequence with pre-fixed deterministic radii and random arguments in $\T^d$:  let $(\theta^1_n, \dots, \theta^d_n)_{n\in\N}$ be a sequence of independent and indentically distributed random variables taking values on $\T^d$, all distributed uniformly and defined on the same probability space $(\Omega, \A, \p)$. Let $(r_n)_{n\in\N}$ be a sequence in $[0, 1)^d$, and define a random sequence $\Lambda=(\lambda_n)_{n\in\N}$ in $\D^d$ as
\[
\lambda_n(\omega)=\left(r^1_ne^{i\theta^1_n(\omega)}, \dots, r^d_ne^{i\theta^d_n(\omega)}\right), \qquad\omega\in\Omega.
\]
Looking for separation conditions on $(r_n)_{n\in\N}$ that yield almost sure interpolating properties for $\Lambda$, partition $\D^d$ into the following dyadic radial rectangles
\[
I_m:=\{z\in\D^d: 1-2^{-m_i}\leq|z^i|<1-2^{-(m_i+1)}, i=1,\dots d\}
\]
and count the number of points of $\Lambda$ that fall into each $I_m$:
\[
N_m= | \Lambda\cap I_m|,
\]
for any multi-index $m=(m_1,\dots,m_d)$ in $\N^d$. Observe that $(N_m)_{m\in\N^d}$ depends exclusively on the sequence $(r_n)_{n\in\N}$, and therefore it is a deterministic quantity. The smaller the sequence $(N_m)_{m\in\N^d}$ is, the fastest the random sequence $\Lambda$ escapes any compact subset of the polydisc, and therefore (at least intuitively) the most chances it has to be interpolating. This is the idea behind \cite[Coro 1.4]{randomdayo} ( here $|m|=m_1+\dots+m_d$ will denote the length of $m$) :
\begin{theo}
\label{theo:random}
If 
\begin{equation}
\label{eqn:ssrandom}
\sum_{m\in\N^d}N_m^{1+\frac{1}{d}}2^{-\frac{|m|}{d}}<\infty,
\end{equation}
then $\Lambda$ is interpolating almost surely. If 
\[
\sum_{m\in\N^d}N_m^22^{-|m|}=\infty,
\]
then $\Lambda$ is not interpolating almost surely.
\end{theo}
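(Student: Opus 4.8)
The plan is to read off both assertions from the Berndtsson--Chang--Lin trichotomy, Theorem~\ref{theo:poly:interp}, applied almost surely to $\Lambda$: for the first implication I would check that the strong separation condition~(a) holds with probability one, and for the second that the weak separation~\eqref{eqn:ws}, which by (b)$\Rightarrow$(c) is necessary for interpolation, fails with probability one. All of the probabilistic content lies in the behaviour of the pseudo-hyperbolic distance on $\D^d$ under independent uniform randomization of the angular coordinates; note that since the radii are deterministic, the rectangle $I_m$ containing a given $\lambda_n$ is deterministic, so that $N_m$ is a deterministic quantity.

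\emph{Sufficiency.} Assume \eqref{eqn:ssrandom}, and set $S_n:=\sum_{j\ne n}\log\frac{1}{\rho_G(\lambda_n,\lambda_j)}\ge 0$; almost surely the $\lambda_n$ are pairwise distinct, so each $S_n$ is finite-valued and condition~(a) says exactly that $\sup_nS_n<\infty$. The first step is a pointwise estimate. If $\lambda_n\in I_m$ and $\lambda_j\in I_{m'}$, then, conditioning on $\lambda_n$ and averaging over the (independent) angular coordinates of $\lambda_j$, I would combine
\[
\log\frac{1}{\rho_G(\lambda_n,\lambda_j)}=\min_{1\le i\le d}\log\frac{1}{\rho(\lambda_n^i,\lambda_j^i)}\le\prod_{i=1}^d\Bigl(\log\frac{1}{\rho(\lambda_n^i,\lambda_j^i)}\Bigr)^{1/d}
\]
with independence across $i$ and the one-variable estimate
\[
\frac{1}{2\pi}\int_{\T}\Bigl(\log\frac{1}{\rho(\lambda_n^i,re^{\mathrm i\theta})}\Bigr)^{1/d}\,d\theta\le C_d\bigl(2^{-m_i}2^{-m'_i}\bigr)^{1/d}
\]
(valid for $d\ge 2$, with an extra factor polynomial in $\min(m_i,m'_i)$ when $d=2$; one proves it from $1-\rho(z,w)^2=\frac{(1-|z|^2)(1-|w|^2)}{|1-\overline z w|^2}$ and the fact that $\int_{\T}|1-\overline z re^{\mathrm i\theta}|^{-2/d}\,d\theta$ is $O(1)$ for $d>2$ and only logarithmically large for $d=2$, which is where the hypothesis $d\ge 2$ enters). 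This gives $\mathbb{E}\bigl[\log\frac{1}{\rho_G(\lambda_n,\lambda_j)}\bigr]\le C\,2^{-(|m|+|m'|)/d}$, and summing over $j$ grouped by the rectangle $I_{m'}$ of $\lambda_j$ gives $\mathbb{E}[S_n]\le C\,2^{-|m|/d}\sum_{m'\in\N^d}N_{m'}2^{-|m'|/d}$, which by \eqref{eqn:ssrandom} is a finite multiple of $2^{-|m|/d}$. The second step is to upgrade this to $\sup_nS_n<\infty$ almost surely. Conditionally on $\lambda_n$ the random variable $S_n$ is a sum of independent non-negative terms, so a Bernstein/Bennett-type tail bound controls $\p[S_n>K]$ through $\mathbb{E}[S_n]$ and the conditional second moments $\mathbb{E}\bigl[\bigl(\log\tfrac{1}{\rho(\lambda_n^i,\lambda_j^i)}\bigr)^{2/d}\bigr]$ of its summands; a union bound over the $N_m$ indices $n$ with $\lambda_n\in I_m$, followed by a summation over $m$, yields a series whose convergence is exactly \eqref{eqn:ssrandom} (it is precisely this union bound over the $N_m$ points in each rectangle that promotes the first-moment weight $N_m2^{-|m|/d}$ to $N_m^{1+1/d}2^{-|m|/d}$). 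A Borel--Cantelli argument, together with the almost sure finiteness of each individual $S_n$, then gives $\sup_nS_n<\infty$ almost surely, and Theorem~\ref{theo:poly:interp}, (a)$\Rightarrow$(b), finishes this direction.

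\emph{Necessity of a small point count.} Assume now $\sum_{m}N_m^22^{-|m|}=\infty$. Since an interpolating sequence is in particular weakly separated (Theorem~\ref{theo:poly:interp}, (b)$\Rightarrow$(c)), it suffices to prove that almost surely $\inf_{n\ne j}\rho_G(\lambda_n,\lambda_j)=0$. Fix $\delta\in(0,1)$ and $m\in\N^d$. After refining $I_m$ into finitely many sub-rectangles so that two points sharing a sub-rectangle have comparable radii in every coordinate — which only costs a $\delta$-dependent constant in the pair count below — consider two such points, for which $1-|z^i|$ is comparable to $2^{-m_i}$. Since the angular increments $\theta_n^i-\theta_j^i$ are independent and uniform, a direct pseudo-hyperbolic computation gives
\[
\p\bigl[\rho_G(\lambda_n,\lambda_j)<\delta\bigr]=\prod_{i=1}^d\p\bigl[\rho(\lambda_n^i,\lambda_j^i)<\delta\bigr]\ \ge\ c\,\delta^d\prod_{i=1}^d2^{-m_i}=c\,\delta^d\,2^{-|m|}.
\]
Let $E_m$ be the event that $I_m$ contains a pair of points within pseudo-hyperbolic distance $\delta$. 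A Bonferroni estimate over a positive proportion of the $\binom{N_m}{2}$ pairs in $I_m$ gives $\p[E_m]\ge c'\min\{1,\,\delta^d N_m^22^{-|m|}\}$, so $\sum_m\p[E_m]=\infty$; moreover the events $E_m$ are independent, as distinct rectangles involve disjoint families of the $\theta$'s, so by the second Borel--Cantelli lemma almost surely $E_m$ occurs for infinitely many $m$. Hence $\inf_{n\ne j}\rho_G(\lambda_n,\lambda_j)\le\delta$ almost surely, and intersecting over a sequence $\delta_k\downarrow 0$ gives $\inf_{n\ne j}\rho_G(\lambda_n,\lambda_j)=0$ almost surely. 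Thus $\Lambda$ is almost surely not weakly separated, hence by Theorem~\ref{theo:poly:interp} almost surely not interpolating.

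\emph{The main obstacle.} The necessity half is essentially routine once the collision probability (comparable to $\delta^d2^{-|m|}$) has been computed: it is a two-step Borel--Cantelli argument built on independence across rectangles. The delicate part is the sufficiency half, where the two real difficulties are the sharp $d$-variable pseudo-hyperbolic integral bound for $\mathbb{E}[\log\frac{1}{\rho_G}]$ — the endpoint $d=2$, at which the relevant oscillatory integral is only logarithmically divergent, needing particular care — and the moment bookkeeping that turns the first-moment estimate into the almost sure statement $\sup_nS_n<\infty$ while producing the exact exponent $1+\frac1d$ of \eqref{eqn:ssrandom}. These are precisely the estimates carried out in \cite[Corollary~1.4]{randomdayo}.
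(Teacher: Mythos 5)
The paper does not prove Theorem~\ref{theo:random}: it quotes the statement verbatim from \cite[Corollary~1.4]{randomdayo} and immediately moves on to using it to build examples of interpolating $d$-tuples of commuting matrices. There is therefore no in-paper argument to compare yours against; the only honest benchmark is the Berndtsson--Chang--Lin framework (Theorem~\ref{theo:poly:interp}) that the paper itself sets up, and against which your outline fits correctly. Since $(a)\Rightarrow(b)$ but $(c)\not\Rightarrow(b)$ for $d\ge 2$, the only route the trichotomy offers for a sufficient condition is to verify strong separation~(a) almost surely, and that is what you do; for necessity you correctly use $(b)\Rightarrow(c)$ and disprove weak separation. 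Your first-moment bound is consistent: by Jensen/geometric-mean, independence across coordinates, and the one-variable estimate you state, one gets $\mathbb{E}[\log\frac{1}{\rho_G(\lambda_n,\lambda_j)}]\lesssim 2^{-(|m|+|m'|)/d}$ (with a harmless polynomial-in-$|m|$ factor when $d=2$), and $\sum_{m'}N_{m'}2^{-|m'|/d}\le\sum_{m'}N_{m'}^{1+1/d}2^{-|m'|/d}<\infty$ makes $\mathbb{E}[S_n]$ uniformly bounded on each $I_m$. The necessity half — collision probability $\gtrsim\delta^d2^{-|m|}$ for a pair in $I_m$, a second-moment/Bonferroni lower bound for $\p[E_m]$, independence of the $E_m$ across rectangles (which indeed holds because the radii are deterministic, so the index set $\{n:\lambda_n\in I_m\}$ is deterministic and the relevant $\theta$'s are disjoint), then second Borel--Cantelli, then intersection over $\delta_k\downarrow 0$ — is sound as written.

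The place where the proposal is genuinely incomplete, and where you yourself flag the difficulty, is the passage from the uniform bound on $\mathbb{E}[S_n]$ to $\sup_n S_n<\infty$ a.s.\ with the precise exponent $1+\frac1d$. Asserting a ``Bernstein/Bennett-type tail bound'' is not enough on its own: the summands $\log\frac{1}{\rho_G(\lambda_n,\lambda_j)}$ are unbounded (with only polynomial-type tails near a collision), so one must actually compute the conditional second moments $\mathbb{E}[(\log\frac{1}{\rho_G(\lambda_n,\lambda_j)})^2]$ rectangle by rectangle, control the Orlicz/heavy-tail contribution, and see exactly how the factor $N_m$ from the union bound over $\{n:\lambda_n\in I_m\}$ combines with the tail decay to produce $N_m^{1+1/d}$ rather than, say, $N_m\log N_m$ or $N_m^2$. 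That bookkeeping is the heart of \cite[Corollary~1.4]{randomdayo}, and your proposal correctly names it as the main obstacle but does not resolve it. Since I do not have \cite{randomdayo} in front of me, I cannot certify that the concentration mechanism you describe is the one actually used there, only that it is the natural one and that the rest of your outline is consistent with the theorem's statement and with the $(a)\Rightarrow(b)\Rightarrow(c)$ scaffolding the paper relies on.
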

Random interpolating sequences help to construct examples of interpolating sequences of $d$-tuples of commuting matrices \emph{of any dimensions}. Let indeed $(m_n)_{n\in\N}$ be a sequence of positive integeres, and choose, for any $n$ in $\N$, $m_n$ points  $\tau_{n, 1}, \dots, \tau_{n, m_n}$ on the $d$-torus. A sequence of $d$-tuples of commuting matrices having those points as their joint spectra is $W=(W_n)_{n\in\N}$, where
\[
W_n^i:=\diag(\tau_{n, 1}^i,\dots, \tau_{n, m_n}^i).
\]
In order for the joint spectra to belong to $\D^d$, let us re-scale the matrices in $W$ via a sequence $(r_n)_{n\in\N}$ in $[0, 1)^d$:
\[
A_n:=(r_n^1W_n^1, \dots, r^d_nW_n^d)\qquad n\in\N.
\]
The more sparse the sequence $(r_n)_{n\in\N}$ is the more separated the joint spectra of the matrices in $A:=(A_n)_{n\in\N}$ are. It is natural then to ask if there is a choice of the radii $(r_n)_{n\in\N}$ that makes the sequence $A$ interpolating, for some choice of the sequence $T:=(\tau_{n, j})$. Let $(\alpha_n)_{n\in\N}$ be a sequence in $\N^d$ and let 
\[
r_n^i=1-2^{-\alpha_n^i}.
\]
If the point in $T$ are uniformly randomly chosen $\T^d$ independently one from the other and if
\begin{equation}
\label{eqn:randomatss}
\sum_{n\in\N}m_n^{1+\frac{1}{d}}~2^{-\frac{(\alpha_n^i+\dots+\alpha_n^d)}{d}}<\infty
\end{equation}
then the collection $Z$ of all joint eigenvalues of $A$ satisfies \eqref{eqn:ssrandom} and therefore Z is interpolating almost surely, thanks to Theorem \ref{theo:random}. Fixed $(m_n)_{n\in\N}$, it will suffice then to choose the sequence $(\alpha_n)_{n\in\N}$ diverging fast enough in order for \eqref{eqn:randomatss} to hold. This will give a whole family of possible choices of the parameters in $T$ for which $Z$ (and hence $A$) is interpolating. In particular, there exists a sequence of interpolating $d$-tuples of commuting matrices given any choice for their dimensions.
\section{Non Commuting $d$-tuples}
\label{sec3}
The aim of this Section is to prove Theorem \ref{theo:ncriesz} and Theorem \ref{theo:ssnc}.\\
This will be done in Section \ref{sec:ncpp} by showing that the NC Drury-Arveson space has an extension property which works as analogue of the Pick property of the classic Drury-Arveson space.\\
Section \ref{sec:ncex} will then consider an explicit example of a class of sequences of pairs of $2\times2$ non commuting matrices and it will characterize those that are interpolating.
\subsection{The NC Pick Property}
\label{sec:ncpp}
 Salomon, Shalit and Shamovich proved in \cite[Th. 4.7]{orr} that $\rkhs^2_d$ has the following non-commutative version of the Pick property: let $\Omega$ be a subset of $\Ball_d$, and let $\overline{\Omega}^{nc}$ be its NC-envelop, that is, the smallest NC set containing $\Omega$ that is also closed under left intertwiners:
\[
W\in\overline{\Omega}^{nc}\implies P^{-1}WP\in\overline{\Omega}^{nc}.
\]
 Suppose $f_0\colon\Omega\to\Mat_1$ is an  NC function that extends to an NC function on $\overline{\Omega}^{nc}$ and suppose that the map
\[
R_0\colon\spa\{K_W(u, v)\,|\, W\in\Omega\}\to\spa\{K_W(u, v)\,|\, W\in\Omega\}
\]
such that, for any $W$ in $\Omega(n)$,
\[
R_0(K_W(u, v))=K_W(u, f_0(W)^*v)\qquad u, v\in\C^n
\]
 is a contraction. Then $f_0$ extends to a contractive multiplier on $\Ball_d$, that is, there exists a function $f$ in $\rkhs^\infty_d$ whose norm doesn't exceed $1$ such that $f_{|\Omega}=f_0$.\\
With this in mind we can partially extend Theorem \ref{theo:diagonal} to this non-commutative setting, and prove Theorem \ref{theo:ncriesz}:
\begin{theo}
\label{theo:ncdiagonal}
Let $Z=(Z_n)_{n\in\N}$ be a sequence in $\Ball_d$. The following are equivalent:
\begin{description}
\item[(i)] $Z$ is interpolating;
\item[(ii)] For any bounded $(w_n)_{n\in\N}$ in $\C$ there exists a function $\phi$ in $\rkhs^\infty_d$ such that
\begin{equation}
\label{eqn:diagonal}
\phi(Z_n)=w_n~ Id\qquad n\in\N;
\end{equation}
\item[(iii)] The sequence $(\rkhs_n)_{n\in\N}$ defined in \eqref{eqn:ncmodel} is a Riesz System.
\end{description}
\end{theo}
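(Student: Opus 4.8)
The plan is to prove the cycle of implications (i)$\Rightarrow$(ii)$\Rightarrow$(iii)$\Rightarrow$(i), following the outline of the proof of Theorem \ref{theo:diagonal} but with the NC Pick property of \cite{orr} recalled above replacing Agler's construction; because that Pick property is already stated for an arbitrary subset of $\Ball_d$, no finite-truncation-plus-normal-families step is needed, and the equivalence (i)$\iff$(iii) in particular yields Theorem \ref{theo:ncriesz}. A pleasant by-product, exactly as in the commuting case, is that (ii)$\Rightarrow$(i) shows the notion of interpolating sequence to be equivalent to the a priori weaker one allowing only diagonal targets. The implication (i)$\Rightarrow$(ii) is immediate, applying interpolation to the constant targets $\phi_n\equiv w_n$.

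For (ii)$\Rightarrow$(iii) I would first upgrade (ii) to a uniform estimate. Let $X$ be the (norm-closed, since $\|\phi(Z_n)\|\le\|\phi\|_\infty$) subspace of those $\phi\in\rkhs^\infty_d$ with $\phi(Z_n)\in\C\,Id$ for all $n$, and $N_0:=\{\phi\in\rkhs^\infty_d:\phi(Z_n)=0\ \forall\,n\}\subseteq X$. The assignment $\phi\mapsto(w_n)$, where $\phi(Z_n)=w_n\,Id$, induces a bounded linear bijection $X/N_0\to l^\infty$, surjective by (ii), so the open mapping theorem produces $C_0$ such that every $(w_n)$ in the unit ball of $l^\infty$ is realized by some $\phi$ with $\|\phi\|_\infty\le C_0$. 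For such $\phi$, the identity $M_\phi^*K_{Z_n}(u,v)=K_{Z_n}(u,\phi(Z_n)^*v)=\overline{w_n}\,K_{Z_n}(u,v)$ shows that the operator $\mu$ on $\overline{\spa}\{\rkhs_n\}$ acting as $w_n\,Id$ on each $\rkhs_n$ is the restriction of $M_\phi^*$, hence $\|\mu\|\le C_0$; Theorem \ref{theo:nikriesz}, (iii)$\Rightarrow$(i), then gives that $(\rkhs_n)_{n\in\N}$ is a Riesz system.

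For the main implication (iii)$\Rightarrow$(i), fix a bounded target sequence $(\phi_n)_{n\in\N}$ in $\rkhs^\infty_d$, put $M:=\sup_n\|\phi_n\|_\infty$ (we may assume $M>0$) and let $C$ be the Riesz bound of $(\rkhs_n)_{n\in\N}$. Set $\psi_n:=\phi_n/(CM)$, $\Omega:=\{Z_n:n\in\N\}$, and define $f_0\colon\Omega\to\Mat_1$ by $f_0(Z_n):=\psi_n(Z_n)$. Since the Riesz hypothesis forces the $\rkhs_n$ to have pairwise trivial intersections and to lie in algebraic direct sum, the operator $R_0$ attached to $f_0$ is well defined on $\spa\{\rkhs_n\}$ — each map $K_{Z_n}(u,v)\mapsto K_{Z_n}(u,\psi_n(Z_n)^*v)$ being well defined on $\rkhs_n$ by \eqref{eqn:ncrep}, because $\psi_n(Z_n)$ lies in the norm closure of the algebra generated by the coordinates of $Z_n$ — and on $\rkhs_n$ it acts as $M_{\psi_n}^*|_{\rkhs_n}$, of norm at most $\|\psi_n\|_\infty\le 1/C$. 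Theorem \ref{theo:nikriesz}, (i)$\Rightarrow$(ii), then gives $\|R_0\|\le C\cdot(1/C)=1$, so $R_0$ is a contraction. Granting that $f_0$ extends to an NC function on $\overline{\Omega}^{nc}$ (block-diagonal on direct sums, conjugated on similarities), the NC Pick property of \cite{orr} provides a contractive multiplier $\psi\in\rkhs^\infty_d$ with $\psi(Z_n)=\psi_n(Z_n)$ for all $n$; then $\phi:=CM\,\psi\in\rkhs^\infty_d$ satisfies $\phi(Z_n)=\phi_n(Z_n)$ for every $n$, so $Z$ is interpolating.

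The step I expect to be the main obstacle is precisely the clause granted above: verifying that $f_0$ genuinely extends to an NC function on $\overline{\Omega}^{nc}$. The point is that the Riesz system hypothesis must be shown to rule out every nontrivial NC relation among the nodes — in particular any similarity identifying a node $Z_k$ with a direct sum of other $Z_n$'s (which, as above, would already collapse two of the $\rkhs_n$ and destroy the lower Riesz bound), and which would otherwise force compatibility constraints on the targets that a general bounded sequence $(\phi_n)$ need not satisfy — so that the block-diagonal/conjugation prescription for $f_0$ is unambiguous; here one also uses that each $\psi_n(Z_n)$ belongs to the algebra generated by the coordinates of $Z_n$, being a limit of free polynomials in $Z_n$. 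Once that structural fact is established, the rest is routine bookkeeping of constants through the three equivalent conditions of Theorem \ref{theo:nikriesz} and through the rescaling by $CM$.
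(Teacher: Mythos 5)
Your proof follows essentially the same route as the paper's: (i)$\Rightarrow$(ii) by constant targets, (ii)$\Rightarrow$(iii) via Theorem \ref{theo:nikriesz}, and (iii)$\Rightarrow$(i) via the NC Pick property of \cite{orr}. Where you diverge from the paper's write-up is in two places, both of which make the argument tighter. First, for (ii)$\Rightarrow$(iii) you insert an open mapping theorem step to extract a uniform constant $C_0$; the paper goes directly from ``for any $\phi$ satisfying \eqref{eqn:diagonal} the restriction of $M^*_\phi$ is bounded'' to the Riesz conclusion, which strictly speaking also requires the uniformity you supply (the theorem it invokes is an equivalence for a fixed constant $C$), so your step is a genuine improvement in rigor rather than a detour. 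Second, your explicit rescaling by $CM$ to force $\|R_0\|\leq1$ is what the paper leaves implicit.

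On the step you flag as ``the main obstacle'' — that $f_0$ genuinely extends to an NC function on $\overline{\Omega}^{nc}$ — you have correctly identified the one non-routine point. The paper dispatches it in one clause: ``since each $\phi_n$ respects direct sums and left intertwiners.'' That sentence explains why the block-diagonal/conjugation \emph{prescription} is natural, but it does not by itself establish \emph{well-definedness}: if $P^{-1}(\bigoplus_j Z_{n_j})P=\bigoplus_k Z_{m_k}$ one must know that $P$ also intertwines $\bigoplus_j\phi_{n_j}(Z_{n_j})$ with $\bigoplus_k\phi_{m_k}(Z_{m_k})$, which is not automatic for arbitrary targets. Your sketch of a resolution is on the right track: the lower Riesz bound rules out any $Z_k$ being similar to a direct summand built from other nodes (such a relation forces $\rkhs_k$ into $\overline{\spa}\{\rkhs_{n_j}\}$ and kills the lower bound), and for the remaining self-intertwiners one uses that $\phi_n(Z_n)$ lies in the SOT-closed algebra generated by the coordinates of $Z_n$ and hence commutes with the commutant of $Z_n$. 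Neither you nor the paper writes this out in full, so the proposal should not be graded as gap-free, but you are pointing at the same place the paper is terse, and the rest of your argument is sound and tracks the paper's proof closely.
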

\begin{proof}
The implication (i)$\implies$(ii) is trivial, as constant functions belong to $\rkhs^\infty_d$. Moreover, (ii)$\implies$(iii) follows from Theorem \ref{theo:nikriesz}, since for any $\phi$ in $\rkhs^\infty_d$ satisfying \eqref{eqn:diagonal} the restriction of $M^*_\phi$ to $\spa\{\rkhs_n\,|\, n\in\N\}$ is a bounded linear operator that acts like $w_n~Id$ on each $\rkhs_n$. To conclude, it suffices to show then that (iii)$\implies$(i). To do so, let $(\phi_n)_{n\in\N}$ be a bounded sequence in $\rkhs^\infty_d$. There exists an NC function $\phi_0\colon\overline{Z}^{nc}\to\Mat_1$ such that 
\[
\phi_0(Z_n)=\phi_n(Z_n)\qquad n\in\N, 
\]
since each $\phi_n$ respects direct sums and left intertwiners. Since $(\phi_n)_{n\in\N}$ is bounded, so is the sequence of operator norms $(||R_n||)_{n\in\N}$, where 
\[
R_n:=(M^*_{\phi_n})_{|\rkhs_n}
\]
is the linear map from $\rkhs_n$ to itself such that
\[
R_n(K_{Z_n}(u, v))=K_{Z_n}(u, \phi_n(Z_n)^*v)=K_{Z_n}(u, \phi_0^*(Z_n)v).
\]
Thanks to Theorem \ref{theo:nikriesz}, the map $R_0$ from $\spa\{\rkhs_n\,|\, n\in\N\}$ to itself such that
\[
R_0(K_{Z_n}(u, v))=K_{Z_n}(u, \phi_0^*(Z_n)v)
\]
is bounded, and by the NC Pick property there exists a multiplier $\phi$ in $\rkhs^\infty_d$ such that
\[
\phi(Z_n)=\phi_0(Z_n)=\phi_n(Z_n)\qquad n\in\N,
\]
concluding the proof.
\end{proof}
As for the commuting case, we would like to point out how the a-priori weaker interpolation property in (ii) is in fact equivalent for $Z$ to be interpolating.\\

Theorem \ref{theo:ncriesz} is not the only main consequence of the NC Pick property of the NC Drury-Arveson space. Indeed, since the existence of a function $\phi$ in $\rkhs^\infty_d$ of norm $M$ separating two points in $\Ball_d$
\[
\phi(Z)=Id\qquad \phi(W)=0
\]
 is equivalent to the operator $R\colon\spa\{\rkhs_W, \rkhs_Z\}\to\spa\{\rkhs_W, \rkhs_Z\}$ such that
\[
R_{|\rkhs_Z}=Id\qquad R_{|\rkhs_W}=0
\]
being bounded by $M$, one has that the Gleason NC distance between $Z$ and $W$ coincide with the sine of the angle between $\rkhs_Z$ and $\rkhs_W$ in the Hilbert space $\rkhs^2_d$:
\[
\rho_{NC}(Z, W)=\sin(\rkhs_Z, \rkhs_W).
\]
As a consequence, \eqref{eqn:ncss} can be re-written as 
\[
\prod_{n\in\N}\sin(\rkhs_n, \underset{j\ne n}{\overline{\spa}}\{\rkhs_j\})>0,
\]
 and Theorem \ref{theo:ssnc} follows from Theorem \ref{theo:ncriesz} and Proposition \ref{prop:ssriesz}.
\subsection{An Example}
\label{sec:ncex}
We give here an example of an interpolating sequence of pairs of non-commuting matrices. Let $(\alpha_n)_{n\in\N}$ and $(\beta_n)_{n\in\N}$ be two sequences of non-zero complex numbers, and define the sequence $(Z_n)_{n\in\N}$ in $\Mat_2$ as
\[
Z_n^1:=\begin{bmatrix}
0 & \alpha_n\\
0 & 0
\end{bmatrix}\quad Z_n^2:=\begin{bmatrix}
0 & 0\\
\beta_n & 0
\end{bmatrix}\qquad n\in\N.
\]
Then $Z_n^1$ and $Z_n^2$ do not commute for any $n$ in $\N$, and since
\[
Z_n^1(Z_n^1)^*+Z_n^2(Z_n^2)^*=\begin{bmatrix}
|\alpha_n|^2 & 0\\
0 & |\beta_n|^2
\end{bmatrix}
\]
we have that $Z$ belongs to $\Ball_2$ if and only if any $(\alpha_n, \beta_n)$ belongs to the bi-disc $\D^2$. We claim that a separation condition on the sequence $(\alpha_n, \beta_n)_{n\in\N}$ encodes all the cases in which $Z$ is an NC interpolating sequence:
\begin{theo}
\label{theo:ncex}
$Z$ is interpolating if and only if $(\alpha_n\beta_n)_{n\in\N}$ is interpolating in $\D$.
\end{theo}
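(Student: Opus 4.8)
The strategy is to run both implications through the diagonal reformulation of Theorem~\ref{theo:ncdiagonal}, so that only scalar targets are involved, and to exploit that the evaluation of an NC function at $Z_n$ is entirely governed by the single scalar $\gamma_n:=\alpha_n\beta_n$. Concretely, let $P:=\left(\begin{smallmatrix}1&0\\0&0\end{smallmatrix}\right)$ and $Q:=\left(\begin{smallmatrix}0&0\\0&1\end{smallmatrix}\right)$. One checks directly that
\[
Z_n^1Z_n^2=\gamma_n P,\qquad Z_n^2Z_n^1=\gamma_n Q,\qquad (Z_n^1)^2=(Z_n^2)^2=0,
\]
so $Z_n^l=0$ for every word $l\in\W_2$ that is not alternating. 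Hence, for $\phi=\sum_{l\in\W_2}a_lZ^l\in\rkhs^\infty_2$ (its coefficients are square-summable because $\rkhs^\infty_2=\M_{\rkhs^2_2}$), the only words that contribute to the $(1,1)$ entry of $\phi(Z_n)$ are the empty word and the $(12)^k$, and one reads off $\phi(Z_n)_{1,1}=\sum_{k\ge0}a_{(12)^k}\gamma_n^k$. Equivalently, by the reproducing property \eqref{eqn:ncrep}, $\phi(Z_n)_{1,1}=\braket{\phi,K_{Z_n}(e_1,e_1)}_{\rkhs^2_2}$ and $K_{Z_n}(e_1,e_1)$ is supported precisely on the words $(12)^k$. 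Thus the slice $\psi_\phi(z):=\sum_{k\ge0}a_{(12)^k}z^k$ is holomorphic on $\D$ and satisfies $\psi_\phi(\gamma_n)=\phi(Z_n)_{1,1}$; symmetrically $\phi(Z_n)_{2,2}$ is a holomorphic function of $\gamma_n$ as well.

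Suppose first $(\gamma_n)_{n\in\N}$ is interpolating in $\D$. By Theorem~\ref{theo:ncdiagonal} it suffices to build, for any bounded $(w_n)\subset\C$, a $\phi\in\rkhs^\infty_2$ with $\phi(Z_n)=w_n\,Id$. I would choose $\psi\in\h^\infty$ with $\psi(\gamma_n)=w_n$ and set
\[
\phi(W):=\psi(W^1W^2)+\psi(W^2W^1)-\psi(0).
\]
For $W\in\Ball_2$ the positive operators $W^i(W^i)^*$ are dominated by $W^1(W^1)^*+W^2(W^2)^*$, so $\|W^i\|_{\mathrm{op}}<1$ and therefore $\|W^1W^2\|_{\mathrm{op}},\|W^2W^1\|_{\mathrm{op}}<1$; the NC polynomial maps $W\mapsto W^1W^2$ and $W\mapsto W^2W^1$ thus carry $\Ball_2$ into $\Ball_1$, and composing them with $\psi$ (viewed as an element of $\rkhs^\infty_1=\M_{\rkhs^2_1}$) produces an NC holomorphic function $\phi$ on $\Ball_2$ with $\|\phi\|_\infty\le 3\|\psi\|_\infty$. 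Finally, for a projection and $t\in\D$ one has $\psi(tP)=\psi(0)\,Id+(\psi(t)-\psi(0))P$ and likewise for $Q$; evaluating $\phi$ at $Z_n$ and using $P+Q=Id$ collapses everything to $\phi(Z_n)=\psi(\gamma_n)\,Id=w_n\,Id$, as needed.

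Conversely, suppose $Z$ is interpolating, fix a bounded $(w_n)$, and (again by Theorem~\ref{theo:ncdiagonal}) pick $\phi\in\rkhs^\infty_2$ with $\phi(Z_n)=w_n\,Id$. With $\psi_\phi$ as in the first paragraph we already have $\psi_\phi(\gamma_n)=\phi(Z_n)_{1,1}=w_n$, so it only remains to see $\psi_\phi\in\h^\infty$. Given $w\in\D\setminus\{0\}$, I would set $t:=|w|^{1/2}\in(0,1)$ and evaluate $\phi$ at the probe tuple $W=\left(\left(\begin{smallmatrix}0&w/t\\0&0\end{smallmatrix}\right),\left(\begin{smallmatrix}0&0\\t&0\end{smallmatrix}\right)\right)$, which lies in $\Ball_2$ because $\|W^1(W^1)^*+W^2(W^2)^*\|_{\mathrm{op}}=\max(|w/t|^2,t^2)=|w|<1$ and satisfies $W^1W^2=wP$. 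The same diagonal computation gives $\phi(W)_{1,1}=\psi_\phi(w)$, whence $|\psi_\phi(w)|\le\|\phi(W)\|_{\mathrm{op}}\le\|\phi\|_\infty^{1/2}$ (the case $w=0$ being immediate). So $\psi_\phi\in\h^\infty$ interpolates $(w_n)$ at $(\gamma_n)$, and since $(w_n)$ was arbitrary, $(\gamma_n)$ is interpolating in $\D$.

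The matrix identities, the projection formula, and the normalizations of the row norm are all elementary; the step I expect to require the most care is the bookkeeping internal to the NC function calculus — confirming that $W\mapsto\psi(W^1W^2)$ genuinely belongs to $\rkhs^\infty_2$ (composition of NC holomorphic functions together with the norm bound above) and that the value at $Z_n$, resp. at the probe tuple $W$, of an arbitrary bounded NC function is computed by the everywhere-convergent NC power series on $\Ball_d$, so that the reduction to alternating words is legitimate. These are exactly the points where one appeals to the results of Salomon--Shalit--Shamovich recalled in the introduction.
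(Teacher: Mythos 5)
Your proof is correct and takes a genuinely different route from the paper. The paper's argument works entirely on the Hilbert-space side of Theorem~\ref{theo:ncdiagonal}: it verifies condition~(iii) by decomposing each NC kernel $K_{Z_n}(u,v)$ into the four mutually orthogonal pieces $K_{Z_n}(e^m,e^j)$, $m,j\in\{1,2\}$ (each supported on one of the four alternating word families), identifying each family with (a shift of) the sequence of Szeg\H{o} kernels $(s_{\alpha_n\beta_n})$ in $\h^2$, and invoking Lemma~\ref{lemma:orthriesz} to conclude that the subspaces $\rkhs_n$ form a Riesz system. You instead work on the function side, through condition~(ii): for the forward implication you build an explicit interpolant $\phi(W)=\psi(W^1W^2)+\psi(W^2W^1)-\psi(0)$ from a one-variable interpolant $\psi\in\h^\infty$ at $(\gamma_n)$, and for the converse you extract the one-variable slice $\psi_\phi(z)=\sum_k a_{(12)^k}z^k$ and bound it uniformly on $\D$ by evaluating $\phi$ at the probe tuples $W$ with $W^1W^2=wP$. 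Both routes lean on the same structural fact --- that the only words surviving evaluation at $Z_n$ are the alternating ones and that each alternating family sees only the scalar $\gamma_n$ --- but your version trades the orthogonal-decomposition lemma for a direct, quantitative transfer of interpolants: you get $\|\phi\|_\infty\le 3\|\psi\|_\infty$ one way and $\|\psi_\phi\|_\infty\le\|\phi\|_\infty$ the other. One small slip: in the converse direction the chain should end $\|\phi(W)\|_{\mathrm{op}}\le\|\phi\|_\infty$, not $\|\phi\|_\infty^{1/2}$; this does not affect the conclusion that $\psi_\phi\in\h^\infty$.
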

Thanks to Theorem \ref{theo:ncdiagonal}, we need to study separation conditions of the subspaces $(\rkhs_n)_{n\in\N}$ of $\rkhs_2^2$, and according to \eqref{eqn:nckernel} this is equivalent to studying the NC kernel functions of the form $(K_{Z_n}(u_n, v_n))_{n\in\N}$, for any sequence $(u_n)_{n\in\N}$ and $(v_n)_{n\in\N}$ in $\C^2$. Observe that 
\[
(Z_n^i)^2=0\qquad n\in\N, i=1, 2
\]
and therefore, out of all the coefficients of the NC function
\begin{equation}
\label{eqn:ncpsex}
K_{Z_n}(u_n, v_n)(Z)=\sum_{l\in\W_2}\braket{v_n, Z_n^lu_n}Z^l\qquad Z\in\Ball_2,
\end{equation}
the only non zero ones are the ones associated to a word which alternates its two letters $a$ and $b$. Such words can be written in four different ways:
\[
\begin{split}
\omega&=(ab)^l\qquad l\in\N\\
\omega&=(ba)^l\qquad l\in\N\\
\omega&=a(ba)^l\qquad l\in\N\\
\omega&=b(ab)^l\qquad l\in\N,
\end{split}
\]
depending on its lenght and its first and last letter. Since
\[
\begin{split}
Z_n^1Z_n^2&=\begin{bmatrix}
\alpha_n\beta_n & 0\\
0 & 0
\end{bmatrix}\quad Z_n^2Z_n^1=\begin{bmatrix}
0 & 0\\
0 & \alpha_n\beta_n
\end{bmatrix}\\
Z_n^2Z_n^1Z_n^2&=\begin{bmatrix}
0 & 0\\
\beta_n\alpha_n\beta_n & 0
\end{bmatrix}\quad Z_n^1Z_n^2Z_n^1=\begin{bmatrix}
0 & 0\\
0& \alpha_n\beta_n\alpha_n
\end{bmatrix},
\end{split}
\]
we can split the NC power series in \eqref{eqn:ncpsex} into four {\bf orthogonal} pieces
\[
\begin{split}
K_{Z_n}(u_n, v_n)(Z)=&\braket{v_n, u_n}+v_n^1\overline{u_n^1}~\sum_{l=1}^\infty(\overline{\alpha_n\beta_n})^l(Z^1Z^2)^l+v_n^2\overline{u_n^2}~\sum_{l=1}^\infty(\overline{\alpha_n\beta_n})^l(Z^2Z^1)^l+\\
+&\overline{\alpha_n}v_n^1\overline{u_n^2}~\sum_{l=1}^\infty(\overline{\alpha_n\beta_n})^lZ^1(Z^2Z^1)^l+\overline{\beta_n}v_n^2\overline{u_n^1}~\sum_{l=1}^\infty(\overline{\alpha_n\beta_n})^lZ^2(Z^1Z^2)^l\\
=&v_n^1\overline{u_n^1}~\sum_{l=0}^\infty(\overline{\alpha_n\beta_n})^l(Z^1Z^2)^l+v_n^2\overline{u_n^2}~\sum_{l=0}^\infty(\overline{\alpha_n\beta_n})^l(Z^2Z^1)^l+\\
+&\overline{\alpha_n}v_n^1\overline{u_n^2}~\sum_{l=1}^\infty(\overline{\alpha_n\beta_n})^lZ^1(Z^2Z^1)^l+\overline{\beta_n}v_n^2\overline{u_n^1}~\sum_{l=1}^\infty(\overline{\alpha_n\beta_n})^lZ^2(Z^1Z^2)^l.\\
=&\sum_{m, j=1}^2v^m_n\overline{u_n^j}K_{Z_n}(e^m, e^j)(Z),
\end{split}
\]
where $\{e^1, e^2\}$ is the standard basis of $\C^2$.
\begin{proof}[Proof of Theorem \ref{theo:ncex}]
The sequence $(\hat{K}_n(e^1, e^1))_{n\in\N}$ of normalized NC kernels can be unitarily identified with a multiple of the sequence $(s_{\alpha_n\beta_n})_{n\in\N}$ of Szegö kernels at the points of $(\alpha_n\beta_n)_{n\in\N}$ in $\D$. Thus if $(\alpha_n\beta_n)_{n\in\N}$ is not interpolating, the sequence $(\hat{s}_{\alpha_n\beta_n})_{n\in\N}$ is not a Reisz system, and therefore $(\hat{K}_n(e^1, e^1))_{n\in\N}$ is not a Riesz system either. Hence, thanks to Theorem \ref{theo:ncdiagonal}, $Z$ is not interpolating.\\
Conversely, assume that $(\alpha_n\beta_n)_{n\in\N}$ is an interpolating sequences in $\D$ or, equivalently, that the associated sequence $(\hat{s}_{\alpha_n\beta_n})_{n\in\N}$ of Szegö kernels in $\h^2$ is a Riesz system. Both the sequences $(\hat{K}_{Z_n}(e^1, e^1))_{n\in\N}$ and $(\hat{K}_{Z_n}(e^2, e^2))$ are untarly equivalent to  $(\hat{s}_{\alpha_n\beta_n})_{n\in\N}$ in $\h^2$, and hence are Riesz systems. The same holds for $(\hat{K}_{Z_n}(e^1, e^2))_{n\in\N}$ and $(\hat{K}_{z_n}(e^2, e^1))_{n\in\N}$, since they are both unitarly equivalent to a multiple of the sequence of shifted Szegö kernels $(zs_{\alpha_n\beta_n})_{n\in\N}$. Since the four sequences \[
(K_{Z_n}(u^m, u^j))_{n\in\N}\qquad m,j=1, 2
\]
are pairwise orthogonal in $\rkhs^2_2$, we can conclude that the sequence of subspaces
\[
\rkhs_n=\{K_{Z_n}(u_n, v_n)\,|\, u_n, v_n\in\C^2\}\qquad n\in\N
\]
is a Riesz system in $\rkhs^2_2$, thanks to Lemma \ref{lemma:orthriesz} below.
\end{proof}
\begin{lemma}
\label{lemma:orthriesz}
Let $m$ be a finite positive integer, and let $X^i:=(x^i_n)_{n\in\N}$, $i=1, \dots, m$ be $m$ pairwise orthogonal sequences in a Hilbert space $\rkhs$. If each $X^i$ is a Riesz system, then so is the sequence $X=(x_n)_{n\in\N}$ defined as
\[
x_n:=x^1_n+\dots+x^m_n\qquad n\in\N.
\]
\end{lemma}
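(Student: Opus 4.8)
The plan is to reduce the statement to the Pythagorean identity. For each $i=1,\dots,m$ write $\rkhs^i:=\overline{\spa}\{x^i_n\,|\,n\in\N\}\subseteq\rkhs$; then the hypothesis that the sequences $X^1,\dots,X^m$ are pairwise orthogonal means precisely that $\braket{h, h'}_\rkhs=0$ whenever $h\in\rkhs^i$, $h'\in\rkhs^{i'}$ and $i\ne i'$. Since there are only finitely many of them, put $C:=\max_{1\le i\le m}C_i$, where $C_i$ is a Riesz constant for $X^i$, so that for every sequence $(a_n)_{n\in\N}$ in $l^2$ and every $i$ one has
\[
\frac{1}{C^2}\sum_{n\in\N}|a_n|^2\le\left\|\sum_{n\in\N}a_nx^i_n\right\|_\rkhs^2\le C^2\sum_{n\in\N}|a_n|^2.
\]
In particular each $X^i$ is a Bessel system, so each series $\sum_{n\in\N}a_nx^i_n$ converges in $\rkhs$; being a finite sum of convergent series, $\sum_{n\in\N}a_nx_n=\sum_{i=1}^m\sum_{n\in\N}a_nx^i_n$ converges as well, and, since the $x^i_n$ are unit vectors lying in mutually orthogonal subspaces, $\|x_n\|_\rkhs^2=\sum_{i=1}^m\|x^i_n\|_\rkhs^2=m$ for every $n$.

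First I would fix $(a_n)_{n\in\N}$ in $l^2$ and observe that the vectors $y_i:=\sum_{n\in\N}a_nx^i_n$ lie in the pairwise orthogonal subspaces $\rkhs^i$, so that the Pythagorean identity gives
\[
\left\|\sum_{n\in\N}a_nx_n\right\|_\rkhs^2=\left\|\sum_{i=1}^my_i\right\|_\rkhs^2=\sum_{i=1}^m\|y_i\|_\rkhs^2=\sum_{i=1}^m\left\|\sum_{n\in\N}a_nx^i_n\right\|_\rkhs^2.
\]
Then I would bound each of the $m$ summands from above and from below by the Riesz inequality for $X^i$ displayed above and add the resulting estimates, which yields
\[
\frac{m}{C^2}\sum_{n\in\N}|a_n|^2\le\left\|\sum_{n\in\N}a_nx_n\right\|_\rkhs^2\le m\,C^2\sum_{n\in\N}|a_n|^2.
\]
Since $\|x_n\|_\rkhs=\sqrt m$ for all $n$, dividing through by $m$ shows that the normalized sequence $\hat x_n=x_n/\sqrt m$ satisfies \eqref{eqn:rs} with Riesz constant $C$, and therefore $X$ is a Riesz system.

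I do not expect a genuine obstacle here: the substance of the argument is the Pythagorean identity, and the points that deserve a line of care are merely that ``pairwise orthogonal sequences'' is exactly the hypothesis under which the partial sums $y_i$ become mutually orthogonal, that the finiteness of $m$ enters only through the finiteness of $\max_i C_i$ and the harmless rescaling by $\|x_n\|=\sqrt m$, and that the Bessel half of the Riesz condition for each $X^i$ already guarantees that all the series in play converge. If one reads ``Riesz system'' in the looser sense of a sequence whose closed linear span carries uniformly bounded coordinate functionals, then the same computation applies verbatim and the normalization step can be dropped.
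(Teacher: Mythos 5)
Your proof is correct for the lemma exactly as written, and the engine of your argument is the same one the paper uses: the Pythagorean decomposition
\[
\left\|\sum_{n}a_nx_n\right\|^2=\sum_{i=1}^m\left\|\sum_{n}a_nx^i_n\right\|^2
\]
followed by the componentwise Riesz bounds and a global rescaling. The one point worth flagging is a difference in generality. You take the definition of Riesz system at face value, so every $x^i_n$ is a unit vector; then $\|x_n\|=\sqrt m$ is constant and the normalization is a single division by $\sqrt m$. The paper instead normalizes $\|x_n\|=1$ up front and writes $x_n=t^1_n\hat x^1_n+\dots+t^m_n\hat x^m_n$ with $\sum_i|t^i_n|^2=1$, which permits the orthogonal components $x^i_n$ to have norms that \emph{vary with $n$}; the two Riesz inequalities are then run with the weights $|t^i_n|^2$ present, and the identity $\sum_i|t^i_n|^2=1$ (not the constancy of the $t^i_n$) is what makes the bounds close. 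This extra generality is precisely what the paper uses in the proof of Theorem \ref{theo:ncex}: a general unit vector $h_n\in\rkhs_n$ has projections onto the four orthogonal kernels $\hat K_{Z_n}(e^m,e^j)$ whose norms depend on $n$, so the constant-norm case you prove does not cover that application directly. Your argument would go through with the same small modification, namely keeping the coefficients $t^i_n$ in the Riesz estimates and summing via $\sum_i|t^i_n|^2=1$, but as written it proves a slightly narrower statement than what the paper actually needs.
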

\begin{proof}
Without loss of generality, assume that $||x_n||=1$ for any $n$ in $\N$. Thus
\[
x_n=t^1_n\hat{x}^1_n+\dots+t^m_n\hat{x}^m_n,
\]
where thanks to orthogonality 
\begin{equation}
\label{eqn:exlemmasum}
\sum_{i=1}^m|t^i_n|^2=1\qquad n\in\N.
\end{equation}
Fix then an arbitrary $(a_n)_{n\in\N}$ in $l^2$, and observe that
\[
\left|\left|\sum_{n\in\N}a_nx_n\right|\right|^2=\sum_{i=1}^m\left|\left|\sum_{n\in\N}a_nt^i_n\hat{x}^i_n\right|\right|^2.
\]
Therefore, if $C_i$ is the Riesz bound of each $X^i$,
\[
\left|\left|\sum_{n\in\N}a_nx_n\right|\right|^2\leq\sum_{i=1}^mC_i^2~\sum_{n\in\N}|t^i_n|^2|a_n|^2\leq\max_{i=1,\dots, m}C_i^2\sum_{n\in\N}|a_n|^2
\]
and 
\[
\frac{1}{\underset{i=1, \dots, m}{\max} C_i^2}~\sum_{n\in\N}|a_n|^2\leq\sum_{i=1}^m\frac{1}{C_i^2}\sum_{n\in\N}|t_n^i|^2|a_n|^2\leq\left|\left|\sum_{n\in\N}a_nx_n\right|\right|^2,
\]
thanks to \eqref{eqn:exlemmasum}.
\end{proof}

\end{document}